\documentclass{amsart}
\usepackage{pgf,pgfarrows,pgfnodes,pgfautomata,pgfheaps,pgfshade,hyperref, amssymb,enumerate,amsmath}
\usepackage[all]{xy}
\usepackage[capitalize]{cleveref}
\usepackage{mathtools}
\usepackage[shortlabels]{enumitem}
\usepackage{stmaryrd}
\usepackage{tcolorbox}

\definecolor{verde}{cmyk}{0.7,0,0.8,0.1}

\newtheorem{theorem}{Theorem}[section]
\newtheorem*{theorem*}{theorem}
\newtheorem{proposition}[theorem]{Proposition}
\newtheorem{lemma}[theorem]{Lemma}
\newtheorem{corollary}[theorem]{Corollary}

\newtheorem{remark}[theorem]{Remark}

\newtheorem{definition}[theorem]{Definition}

\usepackage{euscript,epsfig}
\usepackage{tikz}
\usetikzlibrary{graphs}
\usetikzlibrary[graphs]
\usetikzlibrary{arrows}
\usepackage{tikz-cd}
\usetikzlibrary{decorations.markings}
\usepackage[colorinlistoftodos]{todonotes}


\def\Z{{\mathbb Z}}

\def\N{{\mathbb N}}

\def\F{{\mathbb F}}


\def\cP{{\mathcal P}}

\def\cM{{\mathcal M}}


\def\diam{{\rm diam}}
\def\Per{{\rm Per}}
\def\Ker{{\rm Ker}}

\setlength{\parindent}{0pt}

\author[]{Paulina Cecchi-Bernales}
\address{Departamento de Matem\'aticas, Facultad de Ciencias, Universidad de Chile. Campus Juan G\'omez Millas. Las Palmeras 3425, \~Nu\~noa, Chile.}
\email{pcecchi@uchile.cl}

\thanks{P. Cecchi-Bernales was supported by ANID/Proyecto Fondecyt Postdoctorado No. 3210746 and ANID/ECOS 210033}

\author[]{Mar\'{\i}a Isabel Cortez}
\address{Facultad de Matem\'aticas, Pontificia Universidad Cat\'olica de Chile. Edificio Rolando Chuaqui, Campus San Joaquín. Avda. Vicuña Mackenna 4860, Macul, Chile.}
\email{maria.cortez@mat.uc.cl}

\thanks{The research of M.I. Cortez was supported by proyecto Fondecyt Regular No. 1190538.}

\author[]{Jaime G\'omez}
\address{Facultad de Matem\'aticas, Pontificia Universidad Cat\'olica de Chile. Edificio Rolando Chuaqui, Campus San Joaquín. Avda. Vicuña Mackenna 4860, Macul, Chile.}
\email{jagomez7@uc.cl}

\thanks{J. Gómez was supported by ANID/ Doctorado Nacional No. 21200054}

\thanks{}

\begin{document}

\title{Invariant measures of Toeplitz subshifts on  non-amenable groups }

\subjclass[2010]{Primary: 37B05, 37B10} 

\keywords{Invariant measures, Toeplitz subshifts, residually finite group}

\date{\today}

\begin{abstract}
Let $G$ be a  countable residually finite group (for instance $\F_2$) and let $\overleftarrow{G}$ be a totally disconnected metric compactification of $G$ equipped with the action of $G$ by left multiplication. For every $r\geq 1$ we construct a Toeplitz $G$-subshift $(X,\sigma,G)$, which is an almost one-to-one extension of $\overleftarrow{G}$, having $r$ ergodic measures $\nu_1, \dots,\nu_r$ such that for every $1\leq i\leq r$ the measure-theoretic dynamical system $(X,\sigma,G,\nu_i)$ is isomorphic to $\overleftarrow{G}$ endowed with the Haar measure. The construction we propose is general (for amenable and non amenable residually finite groups), however,  we point out the differences and obstructions that could appear when the acting group is not amenable. \end{abstract}

\maketitle

\section{Introduction}\label{sec:intro}

A classical problem in topological dynamics is to describe the set of invariant measures of a given dynamical system, which represents all possible measure-theoretic dynamical systems housed by it. This set is known to be a Choquet simplex whose extreme points correspond to the ergodic measures of the system, and it is known to be non-empty provided the phase space is compact and the acting group is amenable (see \cite{Gl}).  Indeed, amenable groups can be characterized as those whose continuous actions on the Cantor set always admit invariant measures \cite{GDeLaH97}. Recently, in \cite{FJ21} has been shown that the amenability of a group can be tested within the class of subshifts, i.e, the symbolic continuous actions on the Cantor set.   Actually, the existence of subshifts of non-amenable groups without invariant measures was known before \cite{FJ21}. Let us mention for instance the example of a minimal shift of finite type of the free group $\F_2$ constructed in \cite[Section 9]{BSS22}. More generally, as a consequence of \cite[Theorem 1.2]{Sew14} it can be deduced that for any non-amenable group $G$ there exists a $G$-subshift on $n$ letters ($n$ depending on $G$) which admits no invariant probability measure. To the best of our knowledge, \cite{FJ21} is the first article to say explicitly that subshifts are a test family for amenability. Let us also emphasise that the systems without invariant measures constructed in \cite{FJ21} are subshifts in a two-letter alphabet.
Considering the previous facts, it is reasonable to wonder about the existence of invariant measures for specific classes of subshifts of non-amenable groups.\\

The problem of describing the set of invariant measures of dynamical systems given by the actions of amenable groups, has been extensively explored for the class of {\it Toeplitz subshifts}. These subshifts were introduced by Jacobs and Keane in the context of $\Z$-actions \cite{JK69} and characterized as the minimal symbolic almost 1-1 extensions of odometers in \cite{DL98}. In \cite{Williams}, Williams  demonstrated that this class of $\Z$-minimal subshifts can have any finite number of ergodic measures, or countably many, or uncountably many of them. Later, Downarowicz \cite{Do91} generalized William's results by showing that any Choquet simplex can be realized as the set of invariant measures of a Toeplitz subshift in $\{0,1\}^{\Z}$, up to affine homeomorphism. Toeplitz $\Z$-subshifts have since been widely studied for the rich variety of possible phenomena they can exhibit, both regarding topological and ergodic-theoretic properties (see for instance \cite{Dow05}, \cite{Iwa96}, \cite{DL96}, \cite{GJ00}, \cite{su01}, \cite{CBD22}).  \\

Toeplitz subshifts have been generalized from $\Z$-actions to more general group actions in \cite{Cor06}, \cite{MICPET08}, \cite{Kr07} and \cite{Krieger}. One of the consequences of this generalization
is the characterization of residually finite groups as those ones that admit (non periodic) Toeplitz subshifts  \cite{MICPET08, Kr07}. In \cite{MICPET08} the authors broadened the characterization of Toeplitz subshifts as the mi\-ni\-mal symbolic almost 1-1 extensions of odometers to actions of residually finite groups.   Downarowicz's result about the realization of Choquet simplices as sets of invariant measures of Toeplitz subshifts was extended in \cite{CP14} to the class of residually finite amenable groups. This realization result has also been proved in \cite{CeCo19} for a special class of amenable groups, namely {\em congruent monotileable} amenable groups, that contains as a proper subset the class of residually finite amenable groups. As the geometry of the set of invariant measures  of a topological dynamical system is an invariant under topological orbit equivalence (\cite{GMPS10}), the problem of realizing Choquet simplices among the actions of a prescribed group, can be interpreted as a way to estimate the size of the family of topological dynamical systems given by the actions of this group up to orbit equivalence.\\

Results about the existence of invariant measures for continuous actions of non amenable groups can be found, for example, in \cite{HM06} and \cite{El21}. In this paper we focus on the existence of invariant measures of Toeplitz subshifts given by actions of countable residually finite groups which are not necessarily amenable. This encompasses all countable residually finite groups having an isomorphic copy of the free group on $n\geq 2$ generators, $\F_n$. As far as we know, the previous studies concerning the set of invariant measures of Toeplitz subshifts, strongly rely on the structure of $\mathbb{Z}$ or on the existence of F\o lner sequences, which do not exist in the case of non amenable groups.  The lack of F\o lner sequences  is one of the main challenges addressed in this work. 
Our main results are the following.

\begin{theorem}\label{theo:main0}
Let $G$  be a countable residually finite group and let $\overleftarrow{G}$ be a totally disconnected metric compactification of $G$ equipped with the action of $G$ by left multiplication. Then   there exists a  regular  Toeplitz $G$-subshift   whose maximal equicontinuous factor is $\overleftarrow{G}$.  
\end{theorem}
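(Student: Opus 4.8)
The compactification $\overleftarrow{G}$ carries a left $G$-action by translations and is a totally disconnected metric $G$-space, so it is a $G$-odometer: we may present it as $\overleftarrow{G}=\varprojlim_n G/\Gamma_n$ for a decreasing sequence $G=\Gamma_0\supseteq\Gamma_1\supseteq\cdots$ of finite-index subgroups with $\bigcap_n\Gamma_n=\{e\}$ (this last condition being exactly what makes $\overleftarrow{G}$ a compactification of $G$ itself, since then $g\mapsto(g\Gamma_n)_n$ is a dense embedding). The plan is to build a Toeplitz point $x\in\Sigma^G$ over a finite alphabet $\Sigma$ with $|\Sigma|\geq 2$ whose period structure is cofinal with $\{\Gamma_n\}$; then, by the characterization of Toeplitz $G$-subshifts as the minimal almost one-to-one extensions of $G$-odometers recalled above (\cite{MICPET08}), the orbit closure $X=\overline{\{\sigma_g x:g\in G\}}$ will be a minimal Toeplitz subshift whose maximal equicontinuous factor is $\varprojlim_n G/\Gamma_n=\overleftarrow{G}$. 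First I would fix a nested sequence of transversals $D_0=\{e\}\subseteq D_1\subseteq D_2\subseteq\cdots$, where $D_n$ selects one representative per $\Gamma_n$-coset, so $|D_n|=[G:\Gamma_n]$ and each $D_{n+1}$ refines $D_n$.

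Next I would define $x$ as the pointwise limit of partial colourings produced by an induction maintaining nested ``hole'' sets $H_0\supseteq H_1\supseteq\cdots$, where $H_n$ is a union of $\Gamma_n$-cosets recording the positions still uncoloured after stage $n$, and every position outside $H_n$ has already received a $\Gamma_n$-periodic colour. Starting from $H_0=G$, at stage $n+1$ I would split each $\Gamma_n$-coset of $H_n$ into its $[\Gamma_n:\Gamma_{n+1}]\geq 2$ subcosets, colour all but one of them by constants (making them $\Gamma_{n+1}$-periodic), and retain a single subcoset as the new hole. Two demands guide the colour choices. For regularity, the hole set is reduced to a single $\Gamma_n$-coset at every stage, whence $|\Per_{\Gamma_n}(x)\cap D_n|/|D_n|\geq 1-1/[G:\Gamma_n]\to 1$. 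To pin down the factor, within each coset being refined I would use at least two distinct symbols across its subcosets, so that each freshly coloured position is $\Gamma_{n+1}$-periodic but not $\Gamma_n$-periodic; the essential period subgroups $P_n$ then satisfy $\Gamma_{n+1}\subseteq P_n\subseteq\Gamma_n$, so $\{P_n\}$ is cofinal with $\{\Gamma_n\}$ and defines the same odometer $\overleftarrow{G}$. The one genuinely delicate point is that $x$ be Toeplitz everywhere, i.e. that $\bigcap_n H_n\cap G=\emptyset$: I would arrange this by steering the nested retained cosets $\Gamma_n c_n$ so that the coherent tower $(\Gamma_n c_n)_n$ converges to a prescribed point $\xi\in\overleftarrow{G}\setminus G$ (possible since $G$ is a proper dense subset of its compactification). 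Any $g\in\bigcap_n\Gamma_n c_n$ would map to $\xi$, contradicting $\xi\notin G$; hence the holes avoid every point of $G$ and each $g\in G$ is coloured at a finite stage.

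Finally I would check that $x$ is a regular Toeplitz array: every $g\in G$ lies in $\Per_{\Gamma_n}(x)$ for the stage $n$ at which it is coloured, and the coset density of $\Per_{\Gamma_n}(x)$ tends to $1$. Minimality of $X$, the almost one-to-one property of the factor map $\pi\colon X\to\overleftarrow{G}$, and the identification of $\overleftarrow{G}$ as the maximal equicontinuous factor then follow from the general theory of Toeplitz $G$-subshifts over residually finite groups (\cite{MICPET08}), using that the essential period structure of $x$ is cofinal with $\{\Gamma_n\}$. The main obstacle, and the reason the non-amenable case requires separate care, is that regularity cannot be phrased through asymptotic density along a F\o lner sequence; it must instead be formulated and controlled through the combinatorial coset fractions $|\,\cdot\,\cap D_n|/|D_n|$ attached to the period structure. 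Consequently the two competing demands---keeping the aperiodic hole set nonempty at every finite stage while draining it to $\emptyset$ pointwise, and simultaneously breaking every coarser symmetry so that the factor is exactly $\overleftarrow{G}$ and not a proper quotient---must be met by the explicit bookkeeping above rather than by an averaging argument. I expect that verifying the escape of the retained-hole tower to $\overleftarrow{G}\setminus G$ (so that $x$ is genuinely Toeplitz) and the cofinality of the essential periods with $\{\Gamma_n\}$ will be the steps demanding the most care.
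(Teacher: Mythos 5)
Your proposal is correct and follows essentially the same route as the paper: identify $\overleftarrow{G}$ with the odometer of a nested sequence $(\Gamma_n)$ of finite-index normal subgroups (the paper's Lemma \ref{odometers-compactification}), build a Toeplitz array by filling in all but one $\Gamma_{n+1}$-subcoset of the current hole at each stage while steering the single retained coset to a point of $\overleftarrow{G}\setminus G$, and deduce regularity from $|D_n\cap\Per(\eta,\Gamma_n)|/|D_n|\to 1$ (the criterion of Lemma \ref{lem:regularmeasure}) together with the general theory of \cite{MICPET08} for minimality, the almost 1-1 factor map, and the identification of the maximal equicontinuous factor. The paper's Proposition \ref{prop:regularexistence} realizes exactly this scheme with an explicit $\{0,1\}$-valued formula; your only loose ends (needing $[\Gamma_n:\Gamma_{n+1}]$ large enough to place two distinct symbols among the newly coloured subcosets, and the verification that the essential periods are cofinal with $(\Gamma_n)$) are handled by passing to a subsequence and by the cited results, respectively.
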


 Theorem \ref{theo:main1} is a direct consequence of  Theorem \ref{theo:main0}.

\begin{theorem}\label{theo:main1}
    Let $G$  be a countable residually finite group and let $\overleftarrow{G}$ be a totally disconnected metric compactification of $G$ equipped with the action of $G$ by left multiplication. Then there exists a uniquely ergodic  Toeplitz $G$-subshift $(X, \sigma,G)$ and an almost 1-1 factor map $\pi:X\to \overleftarrow{G}$, such that if $\nu$ is  the unique ergodic probability measure of $(X, \sigma, G)$ then $\pi$ is  a measure conjugacy between $(X, \sigma, G, \nu)$  and $\overleftarrow{G}$  endowed with the Haar measure.
\end{theorem}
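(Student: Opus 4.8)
The plan is to derive the statement directly from Theorem \ref{theo:main0} together with the general structure theory of Toeplitz $G$-subshifts and their maximal equicontinuous factors. First I would fix a regular Toeplitz $G$-subshift $(X,\sigma,G)$ whose maximal equicontinuous factor is $\overleftarrow{G}$, as provided by Theorem \ref{theo:main0}, and let $\pi\colon X\to\overleftarrow{G}$ be the associated factor map. Since a Toeplitz $G$-subshift is, by the characterization of minimal symbolic almost 1-1 extensions of $G$-odometers, a minimal almost 1-1 extension of $\overleftarrow{G}$, the map $\pi$ is almost 1-1; that is, the set $\overleftarrow{G}_1:=\{y\in\overleftarrow{G}:|\pi^{-1}(y)|=1\}$ is a dense $G_\delta$. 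Moreover, $\overleftarrow{G}$, being the maximal equicontinuous factor of the minimal system $X$, is itself minimal and equicontinuous, hence uniquely ergodic, its unique invariant measure being the Haar measure $\lambda$.

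The key input I would isolate next is the measure-theoretic meaning of regularity: the regularity of the Toeplitz subshift should amount to $\lambda(\overleftarrow{G}_1)=1$, i.e. the set of points of $\overleftarrow{G}$ over which $\pi$ fails to be injective is Haar-null. I expect this to be the one genuinely delicate point, because in the classical setting this equivalence is proved through densities along a F\o lner sequence, which is unavailable here. I would therefore either adopt this as the definition of regularity used in the paper, or establish it through the period structure $\overleftarrow{G}=\varprojlim G/\Gamma_n$ directly, controlling the measure of the aperiodic part by means of the normalized counting (Haar) measures of the finite quotients $G/\Gamma_n$ rather than through F\o lner averages.

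With $\lambda(\overleftarrow{G}_1)=1$ in hand, the remainder is formal. Let $\mu$ be any $\sigma$-invariant Borel probability measure on $X$. Then $\pi_*\mu$ is a $G$-invariant probability measure on $\overleftarrow{G}$, so by unique ergodicity of $\overleftarrow{G}$ we have $\pi_*\mu=\lambda$. Consequently $\mu\bigl(\pi^{-1}(\overleftarrow{G}_1)\bigr)=\lambda(\overleftarrow{G}_1)=1$, and on the $G$-invariant full-measure set $\pi^{-1}(\overleftarrow{G}_1)$ the map $\pi$ is a $G$-equivariant Borel bijection onto $\overleftarrow{G}_1$ with Borel inverse. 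Hence $\mu$ is the pushforward of $\lambda$ under this inverse and is thereby uniquely determined, which proves that $(X,\sigma,G)$ is uniquely ergodic. Writing $\nu$ for this unique measure, the same bijectivity of $\pi$ on a set of full $\nu$-measure shows that $\pi$ is a measure conjugacy between $(X,\sigma,G,\nu)$ and $(\overleftarrow{G},\lambda)$, which is exactly the assertion. The main obstacle, as indicated, is the passage from regularity to $\lambda(\overleftarrow{G}_1)=1$ without recourse to F\o lner averaging; once this is secured, unique ergodicity and the measure conjugacy follow immediately from unique ergodicity of the odometer factor.
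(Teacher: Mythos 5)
Your proposal is correct and follows essentially the same route as the paper: regularity is defined there (Proposition \ref{regular-measure}) precisely as the condition that the non-injectivity set of $\pi$ is Haar-null, and Lemma \ref{lem:measurelifting} performs the same lifting of the Haar measure through the almost-everywhere bijective factor map to obtain both unique ergodicity and the measure conjugacy. The delicate point you isolate --- producing a regular example without F\o lner averaging --- is exactly what Proposition \ref{prop:regularexistence} supplies, via the density criterion $d=1$ computed along the fundamental domains $D_n$ of $G/\Gamma_n$.
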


\begin{theorem} \label{theo:main2}
Let $G$  be a countable residually finite group and let $\overleftarrow{G}$ be a totally disconnected metric compactification of $G$ equipped with the action of $G$ by left multiplication. For every integer $r>1$ there exists a Toeplitz $G$-subshift $X\subseteq \{1,\dots, r\}^G$ with at least $r$ ergodic probability measures $\nu_1,\dots, \nu_r$, and whose maximal equicontinuous factor is $\overleftarrow{G}$.  Furthermore, for every $1\leq i \leq r$, we have the following:
\begin{enumerate}
\item $(X,\sigma, G, \nu_i)$ is measure conjugate to   $\overleftarrow{G}$  endowed with the Haar measure.
\item   $\nu_i(\{x\in X: x(1_G)=i\})\geq \mu(\{x\in X: x(1_G)=i\} )$  for every invariant probability measure $\mu$.
\end{enumerate}
 \end{theorem}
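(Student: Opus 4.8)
The plan is to take the Toeplitz construction underlying Theorem~\ref{theo:main0} and modify it into an $r$-symbol, deliberately non-regular variant, and then to read off the $r$ ergodic measures from $r$ distinct measurable sections of the factor map. Fix a decreasing sequence of finite-index subgroups $G=\Gamma_0\supseteq\Gamma_1\supseteq\cdots$ with $\bigcap_n\Gamma_n=\{1_G\}$ realizing $\overleftarrow{G}=\varprojlim_n G/\Gamma_n$ (the tower furnished by the proof of Theorem~\ref{theo:main0}), together with nested fundamental domains $D_n$, and let $\lambda$ denote the Haar measure on $\overleftarrow{G}$; since the base is minimal and equicontinuous it is uniquely ergodic, with $\lambda$ its only invariant measure. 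I would build a Toeplitz point $x^{*}\in\{1,\dots,r\}^{G}$ whose period structure is governed by the $\Gamma_n$ exactly as in Theorem~\ref{theo:main0}, so that $X=\overline{\{\sigma_g x^{*}:g\in G\}}$ is minimal and the induced map $\pi:X\to\overleftarrow{G}$ is almost $1$-$1$; the difference is that the densities of the hole sets $H_n\subseteq D_n$ (the positions not yet $\Gamma_n$-periodic) are arranged to converge to a strictly positive limit rather than to $0$. Since a dense $G_\delta$ (here the set of Toeplitz points) may have $\lambda$-measure zero, this is compatible with $\pi$ being almost $1$-$1$: the set of $\omega$ whose fiber $\pi^{-1}(\omega)$ is a singleton stays residual, while the set of $\omega$ for which the position $1_G$ is free acquires positive $\lambda$-measure. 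In addition I would arrange, as in the classical $\Z$-constructions of Williams \cite{Williams}, that along suitable scales the holes are filled entirely by each fixed symbol $i$, so that for every $i$ the orbit closure contains the configurations obtained from the forced periodic coloring by filling all free positions with $i$.

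With such an $X$ at hand, for each $i\in\{1,\dots,r\}$ I would construct a $G$-equivariant Borel section $s_i:\overleftarrow{G}\to X$ of $\pi$ as follows: at a position $g$ that is periodic for $\omega$ put the forced value, and at a free position put the symbol $i$. The previous step guarantees $s_i(\omega)\in X$, and equivariance, $s_i(g\cdot\omega)=\sigma_g(s_i(\omega))$, is immediate from the equivariance of the period structure. Setting $\nu_i:=(s_i)_{*}\lambda$ gives a $\sigma$-invariant measure with $\pi_{*}\nu_i=\lambda$ for which $s_i$ is a measurable inverse of $\pi$ on a set of full measure; hence $\pi$ is a measure conjugacy between $(X,\sigma,G,\nu_i)$ and $(\overleftarrow{G},\lambda)$, which is assertion (1), and $\nu_i$ is ergodic because $(\overleftarrow{G},\lambda)$ is. The measures $\nu_1,\dots,\nu_r$ are pairwise distinct: on the positive-$\lambda$-measure set of $\omega$ for which $1_G$ is free, $s_i$ and $s_j$ place the distinct symbols $i$ and $j$ at $1_G$, so the subshift carries at least $r$ ergodic measures, and $\overleftarrow{G}$ is its maximal equicontinuous factor by the framework of Theorem~\ref{theo:main0}.

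For the extremal property (2), I would disintegrate an arbitrary invariant measure $\mu$ over the factor, $\mu=\int_{\overleftarrow{G}}\mu_\omega\,d\lambda(\omega)$, which is legitimate because $\pi_{*}\mu=\lambda$ by unique ergodicity of the base. Writing $f(x)=\mathbf{1}[x(1_G)=i]$, the integral $\int f\,d\mu$ splits into a contribution from those $\omega$ with $1_G$ periodic, on which every fiber point has the same value $c(\omega)$ and the integrand equals $\mathbf{1}[c(\omega)=i]$ independently of $\mu$, plus a contribution from the $\omega$ with $1_G$ free, on which $\mu_\omega(\{x(1_G)=i\})\le 1$. Since $\nu_i$ attains the value $1$ on the latter set (its fiber measure is $\delta_{s_i(\omega)}$ and $s_i(\omega)(1_G)=i$ there) while agreeing with every $\mu$ on the former, we obtain $\int f\,d\nu_i\ge\int f\,d\mu$, which is exactly (2).

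\textbf{Main obstacle.} The essential difficulty is carrying out the combinatorial construction when $G$ is non-amenable, where there are no F\o lner sequences to define or control densities. I would replace F\o lner averaging by the profinite counting coming from the tower $\{\Gamma_n\}$: densities of forced and free positions are measured inside the fundamental domains $D_n$, i.e.\ through the uniform measures on $G/\Gamma_n$ converging to $\lambda$, and all the $\lambda$-measures above (in particular that of the free-position event and the forced frequencies $\lambda(\{c(\omega)=i\})$) are computed this way. The delicate balance is to choose the hole densities and the symbol-filling schedule so as to simultaneously guarantee minimality together with the Toeplitz/almost $1$-$1$ property, keep the limiting hole density strictly positive so that the $\nu_i$ are genuinely distinct, and make the orbit closure contain the pure-$i$ fillings needed for the sections $s_i$ to take values in $X$. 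Reconciling a residual (hence topologically large) set of singleton fibers with a positive-measure set of free positions is the conceptual crux, and it is precisely what forces the construction to be non-regular.
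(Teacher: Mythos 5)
Your overall architecture differs from the paper's in an interesting way: you define the measures $\nu_i$ directly as pushforwards $(s_i)_*\lambda$ of Haar measure under explicit $G$-equivariant Borel sections of $\pi$, whereas the paper obtains them as weak-$*$ accumulation points of the periodic measures $\mu_n=\frac{1}{|D_n|}\sum_{u\in D_n}\delta_{\sigma^{u^{-1}}\eta_n}$, maps $\cM(X,\sigma,G)$ affinely onto a simplex $\triangle$ spanned by the frequency vectors $\vec t_i$, and only a posteriori identifies $\nu_i$ with the "fill all free positions with $i$" points via the sets $Z_{i,k}$ and $A_i$ (Lemmas \ref{partitions-determine-measures}--\ref{Injectivity-pi}). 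If your sections exist, your route to ergodicity (a factor of the ergodic system $(\overleftarrow{G},\lambda)$ is ergodic), to conjugacy (assertion (1)), and to the extremality (2) via disintegration over the uniquely ergodic base is correct and cleaner than the paper's detour through $p$ and Corollary \ref{cor:injectivep}.

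The genuine gap is the single sentence ``the previous step guarantees $s_i(\omega)\in X$.'' Nothing in your sketch guarantees it, and this is precisely where non-amenability bites: $s_i(\omega)$ is the configuration carrying the forced values on $\bigcup_n g_n\Per(\eta,\Gamma_n)$ and the symbol $i$ on the never-forced set, and membership in $X=\overline{O_\sigma(\eta)}$ requires that for every window $vD_n$ the pattern ``forced values, with all remaining holes filled by $i$'' actually occurs as a subpattern of $\eta$ in the correct $\Gamma_n$-coset. In the amenable case one gets this for free because the periodic approximants concentrate on the orbit closure by a F\o lner boundary estimate (Remark \ref{remark:amenable-1}); without F\o lner sets one must produce, for each $n$ and each $m\equiv n\ (\mathrm{mod}\ r)$, an element $\gamma\in(\Gamma_{n+1}\cap D_m)\setminus(D_{n+1}\Gamma_{n+2}\cup\cdots\cup D_{m-1}\Gamma_m)$ and verify $\eta(\gamma D_{n+1})=\eta_n(D_{n+1})$ — this is the inductive counting argument of Lemmas \ref{good-relation1}, \ref{good-patches} and \ref{big-measure1}, and it is the technical heart of the theorem. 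You correctly name this as the ``main obstacle'' but then treat it as a tuning issue (``I would arrange\dots''), so as written the measures $\nu_i$ are not even known to live on $X$. A second, smaller omission: you must also verify that the limiting hole density is strictly positive and in fact that $d<1-d$ (the paper's Proposition \ref{regular}, via the product formula $1-d_{n+1}=(1-\frac{1}{|D_1|})\prod_{j}(1-\frac{|D_j|}{|D_{j+1}|})$ and a choice of subsequence); positivity is what separates the $\nu_i$, and the stronger inequality $t_i<\nu_i([i])$ is what makes the inequality in (2) detect $\alpha_i=1$.
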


The document is organized as follows. In Section \ref{sec:basicdef} we give basic notions concerning topological and measure-theoretic dynamical systems, as well as the basic background on residually finite groups, $G$-odometers and Toeplitz subshifts. Section \ref{sec:regular} is devoted to prove the existence of uniquely ergodic Toeplitz subshifts for arbitrary residually finite groups (Theorem \ref{theo:main1}). In particular, we give a necessary and sufficient condition for a Toeplitz array to be regular. In Section \ref{sec:finitely-many} we define a sequence of periodic measures of the full shift whose accumulation points are supported in a Toeplitz subshift $X$ that we introduce at the beginning of the section. We use this to show that $X$ has at least $r$ ergodic measures.
Finally, Section \ref{sec:measure-conjugate} is devoted to complete the proof  of Theorem \ref{theo:main2}. 

\section{Preliminaries}\label{sec:basicdef}

\subsection{Topological dynamical systems and invariant measures}\label{subsec:defrfgroups}
Let $G$ be a countable discrete infinite group. We denote by $1_G$ the identity of $G$. By a {\it topological dynamical system} we mean a continuous (left) action $\phi$ of  $G$ on a compact metric space $X$. We denote this topological dynamical system by $(X,\phi,G)$. We say that $(X,\phi,G)$ is {\it free} if $\phi^gx=x$ implies $g=1_G$, for every $x\in X$. The system is {\it minimal} if for every $x\in X$ its {\em orbit} $O_{\phi}(x)=\{\phi^gx: g\in G\}$ is dense in $X$. The system is {\it equicontinuous} if the collection of maps $\{\phi^g\}_{g\in G}$ is equicontinuous.  If $X$ is a Cantor set, we say that the topological dynamical system is a {\it Cantor system}.

An {\it invariant measure} of the topological dynamical system  $(X,\phi,G)$, is a probability measure $\mu$ defined on the Borel sigma-algebra of $X$, that verifies  $\mu(\phi^gA)=\mu(A)$, for every $g\in G$ and every Borel set $A$. An invariant measure $\mu$ is said to be {\em ergodic} if $\mu(A)\in\{0,1\}$ whenever $A$ is a Borel set verifying $\phi^g(A)=A$ for all $g\in G$. The set of all invariant measures admitted by a system $(X,\phi, G)$ is denoted $\cM(X,\phi,G)$. It is known to be a Choquet simplex whose extreme points correspond to the ergodic measures of $(X,\phi,G)$ (We refer to \cite[Chapter 4]{Gl} for details). When the set $\cM(X,\phi, G)$ is a singleton, the system is said to be {\em uniquely ergodic}.
If $\mu$ is an invariant measure of $(X,\phi, G)$, the quadruple $(X,\phi,G,\mu)$ is called a {\em probability-measure-preserving (p.m.p) dynamical system}.

A {\it factor map} from  $(X,\phi,G)$ to the topological dynamical system $(Y,\varphi,G)$ is a continuous surjective map $\pi:X\to Y$ such that $\pi(\phi^gx)=\varphi^g\pi(x)$, for every $x\in X$ and $g\in G$. In this case we say that $(X,\phi,G)$ is an {\it extension} of $(Y,\varphi,G)$ and $(Y,\varphi,G)$ is a {\it factor} of $(X,\phi,G)$. The factor map $\pi$ is {\it almost one to one} (or {\em almost 1-1}) if the set of points in $Y$ having only one preimage is residual. If the system $(Y,\varphi,G)$ is minimal, then this is equivalent to the existence of $y\in Y$ such that $|\pi^{-1}\{y\}|=1$. If $\pi$ is an almost 1-1 factor map, then we say that $(X,\phi,G)$ is an {\it almost 1-1 extension} of $(Y,\varphi,G)$.

An equicontinuous system $(Y,\varphi, G)$ is said to be {\it the maximal equicontinuous factor} of $(X,\phi, G)$ if there is a factor map $\pi\colon X\to Y$  such that for any other map $f\colon X\to Y'$, with $(Y',\varphi',G)$ equicontinuous, there exists a factor map $q\colon Y\to Y'$ that satisfies $q\circ \pi=f$. Moreover, if $(X,\phi,G)$ is a minimal almost $1$-$1$ extension of a minimal equicontinuous system $(Y,\varphi,G)$, then $(Y,\varphi,G)$ is the maximal equicontinuous factor of $(X,\phi,G)$ (See \cite[Proposition 5.5.]{Krieger}).

Two p.m.p dynamical systems $(X,\phi, G, \mu)$ and $(Y,\varphi, G,\nu)$ are {\em measure conjugate} if: (i) there exist conull sets $X'\subseteq X$ and $Y'\subseteq Y$ satisfying $\phi^g(X')\subseteq X'$ and $\varphi^g(Y')\subseteq Y'$ for all $g\in G$, and (ii) there is a bijective map $f:X'\to Y'$ which verifies $f, f^{-1}$ are both measurable, $\nu(A)=\mu(f^{-1}(A))$ for every measurable set $A\subseteq Y'$ and $f(\phi^g(x))=\varphi^g(f(x))$ for all $x\in X'$ and $g\in G$. In this case, we say that $f$ is a {\em measure conjugacy}.

\subsection{$G$-odometers, residually finite groups and compactifications.}
A countable group $G$ is said to be {\it residually finite}, if there exists a nested sequence   of finite index subgroups of $G$ with trivial intersection. This is equivalent to the existence of a sequence of normal subgroups with the same characteristics (see \cite{CC10} for more details about residually finite groups). Suppose that $G$ is an infinite residually finite group and let $(\Gamma_n)_{n\in\mathbb{N}}$ be a nested sequence of   finite index subgroups of $G$ such that  $\bigcap_{n\in\mathbb{N}}\Gamma_n=\{1_G\}$. The \textit{$G$-odometer} associated to $(\Gamma_n)_{n\in\mathbb{N}}$ is defined as
 \begin{align*}
    \overleftarrow{G}:=\varprojlim(G/\Gamma_n,\varphi_n)=\{(x_n)_{n\in\mathbb{N}}\in\prod_{n\in\mathbb{N}}G/\Gamma_n: \varphi_n(x_{n+1})=x_n,\mbox{ for every }n\in\mathbb{N}\},
 \end{align*}
where $\varphi_n\colon G/\Gamma_{n+1}\to G/\Gamma_n$ is the canonical projection, for every $n\in\mathbb{N}$. The space $\overleftarrow{G}$ is a Cantor set if we endow every $G/\Gamma_n$ with the discrete topology,  $\prod_{n\in\mathbb{N}}G/\Gamma_n$ with the product topology and  $\overleftarrow{G}$ with the induced topology.  Observe that when the groups $\Gamma_n$'s are normal,  $\overleftarrow{G}$ is a subgroup of $\prod_{n\in\mathbb{N}}G/\Gamma_n$, and $G$ can be seen as a dense subgroup of  $\overleftarrow{G}$  identifying $g\in G$ with  $(g\Gamma_n)_{n\in\mathbb{N}}\in \overleftarrow{G}$.
 There is a natural action $\phi$  of $G$ on $\overleftarrow{G}$ by coordinatewise left multiplication. The topological dynamical system $(\overleftarrow{G},\phi,G)$ is a free equicontinuous minimal Cantor  system which is also known as the $G$-odometer associated to the sequence $(\Gamma_n)_{n\in \N}$.  It is important to note that the $G$-odometers associated to subsequences of $(\Gamma_n)_{n\in\mathbb{N}}$ are conjugate as dynamical systems. 

A $G$-odometer having a group structure (when the $\Gamma_n$'s are normal)  is a totally disconnected metric com\-pac\-ti\-fi\-ca\-tion of $G$, i.e,  this is a  totally disconnected  metric compact group $\overleftarrow{G}$ for which there exists an injective homomorphism $i:G\to \overleftarrow{G}$ such that $i(G)$ is dense in $\overleftarrow{G}$. Conversely, every totally disconnected  metric com\-pac\-ti\-fi\-ca\-tion of $G$ is a $G$-odometer, as the next lemma shows.
\begin{lemma}\label{odometers-compactification}
If $\overleftarrow{G}$ is a totally disconnected metric compactification of $G$,  then there exists a nested sequence $(\Gamma_n)_{n\in\mathbb{N}}$ of finite index normal subgroups of $G$ with trivial intersection, such that the $G$-odometer associated to $(\Gamma_n)_{n\in\mathbb{N}}$ is conjugate to $\overleftarrow{G}$ equipped with the action of $G$ by left multiplication. This implies that $G$ is residually finite if and only if $G$ has a totally disconnected  metric compactification.
\end{lemma}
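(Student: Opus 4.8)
The plan is to exploit the profinite structure of $\overleftarrow{G}$ and to pull it back to $G$ through the dense embedding $i\colon G\to\overleftarrow{G}$. The key structural input is that a compact, totally disconnected, metrizable group is profinite: by van Dantzig's theorem the compact open subgroups form a neighborhood basis of the identity, and in a compact group every open subgroup has finite index. Intersecting such a subgroup with its (finitely many) conjugates produces an open \emph{normal} subgroup of finite index contained in it, so the open normal finite-index subgroups already form a neighborhood basis at $1_{\overleftarrow{G}}$. Since $\overleftarrow{G}$ is metrizable it is second countable, so I can extract from this basis a nested sequence $(U_n)_{n\in\N}$ of open normal finite-index subgroups with $\bigcap_{n\in\N}U_n=\{1_{\overleftarrow{G}}\}$, the intersection being trivial because the $U_n$ form a neighborhood basis in a Hausdorff group.

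Next I set $\Gamma_n:=i^{-1}(U_n)$. Each $\Gamma_n$ is a normal subgroup of $G$ because $U_n$ is normal in $\overleftarrow{G}$ and $i$ is a homomorphism; the sequence is nested because $(U_n)$ is; and $\bigcap_n\Gamma_n=i^{-1}\big(\bigcap_n U_n\big)=\ker i=\{1_G\}$ since $i$ is injective. To see that $\Gamma_n$ has finite index, consider the composite homomorphism $G\to\overleftarrow{G}\to\overleftarrow{G}/U_n$, $g\mapsto i(g)U_n$, whose kernel is exactly $\Gamma_n$. Its image is $i(G)U_n/U_n$; as $U_n$ is open and $i(G)$ is dense, $i(G)$ meets every coset of $U_n$, so $i(G)U_n=\overleftarrow{G}$ and the map is onto. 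Hence $G/\Gamma_n\cong\overleftarrow{G}/U_n$, which is finite, and these isomorphisms are compatible with the canonical projections $\varphi_n$.

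Passing to inverse limits then yields $\varprojlim G/\Gamma_n\cong\varprojlim\overleftarrow{G}/U_n\cong\overleftarrow{G}$, where the last identification is the canonical one expressing the profinite group $\overleftarrow{G}$ as the inverse limit of its quotients by a neighborhood basis of open normal subgroups (injective by triviality of $\bigcap_n U_n$, surjective by compactness). Because left multiplication by $g$ on $G/\Gamma_n$ corresponds, under $G/\Gamma_n\cong\overleftarrow{G}/U_n$, to left multiplication by $i(g)$ on $\overleftarrow{G}/U_n$, this homeomorphism intertwines the two $G$-actions, so it is a conjugacy between the $G$-odometer associated to $(\Gamma_n)_{n\in\N}$ and $\overleftarrow{G}$ with the left-multiplication action. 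The stated equivalence follows at once: if $G$ admits such a compactification, the subgroups $(\Gamma_n)_{n\in\N}$ just produced witness residual finiteness; conversely, if $G$ is residually finite, the $G$-odometer attached to a nested sequence of normal finite-index subgroups with trivial intersection is, as noted before the lemma, a totally disconnected metric compactification of $G$.

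I expect the only genuine obstacle to be the first step — the appeal to van Dantzig's theorem and the passage from open subgroups to open \emph{normal} finite-index subgroups forming a countable neighborhood basis with trivial intersection. Everything afterwards is a routine verification that pulling back along $i$ preserves normality, finiteness of index, nestedness, and triviality of the intersection, together with the compatibility check showing that the inverse-limit identification intertwines the $G$-actions.
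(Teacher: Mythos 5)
Your proof is correct and follows essentially the same route as the paper: both identify $G$ with a dense subgroup of the profinite group $\overleftarrow{G}$, pull back a cofinal nested sequence of open normal finite-index subgroups (your $\Gamma_n=i^{-1}(U_n)$ is the paper's $\Ker(\pi_n)\cap G$), use density plus openness to get $G/\Gamma_n\cong\overleftarrow{G}/U_n$, and pass to inverse limits equivariantly. The only difference is cosmetic: you derive the profinite presentation via van Dantzig's theorem where the paper cites Ribes--Zalesskii, and you are somewhat more explicit about normality and the surjectivity of $G\to\overleftarrow{G}/U_n$.
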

\begin{proof}
If $G$ is residually finite, then any $G$-odometer is a totally disconnected metric compactification of $G$. Now, suppose that $\overleftarrow{G}$ is a totally disconnected metric compactification of $G$. We can identify $G$ with a dense subgroup of $\overleftarrow{G}$. Since $\overleftarrow{G}$ is compact and totally disconnected,  $\overleftarrow{G}$ is profinite,   i.e., an inverse limit of finite groups equipped with the discrete topology  (see for example \cite[Theorem 1.1.12]{RZ10}).  The metrizability of $\overleftarrow{G}$  implies that  the inverse limit that defines  $\overleftarrow{G}$ is countable  (\cite[Corollary 1.1.13 and Remark 2.6.7]{RZ10}), namely $\overleftarrow{G}=\varprojlim(G_n,\tau_n)$. 
For every $n\in \mathbb{N}$, let $\pi_n:\overleftarrow{G}\to G_n$  be the natural projection.  The group $\Gamma_n=\Ker(\pi_n)\cap G$  is a finite index subgroup of $G$. Indeed, the map $\phi_n:G_n\to G/\Gamma_n$ given by $\phi_n(a)=\{(g_j)_{j\in\mathbb{N}}\in G: g_n=a\}$ is a well defined isomorphism. Since  $\varphi_n\circ \phi_{n+1}=\phi_n\circ \tau_n$, for every $n\in\mathbb{N}$, we deduce that $\overleftarrow{G}$ is conjugate to the odometer associated with the sequence $(\Gamma_n)_{n\in\mathbb{N}}$.
\end{proof}

\medskip

\subsection{Toeplitz $G$-subshifts}\label{subsec:deftoeplitzodometers} Let  $\Sigma$ be a finite set with at least two elements. The set $$\Sigma^G=\{x=(x(g))_{g\in G}: x(g)\in \Sigma, \mbox{ for every } g\in G\}$$  is a Cantor set if we endow $\Sigma$ with the discrete topology and $\Sigma^G$ with the product topology. The {\it (left) shift action} $\sigma$ of $G$ on $\Sigma^G$ is defined as
$$
\sigma^gx(h)=x(g^{-1}h), \mbox{ for every } h, g\in G \mbox{ and } x\in \Sigma^G.
$$
The topological dynamical system $(\Sigma^G, \sigma, G)$ is a Cantor system known as the {\it full G-shift}. If $X\subseteq \Sigma^G$ is a closed $\sigma$-invariant set, we say that $X$ is a {\it subshift}.  The system $(X, \sigma|_X,G)$, given by the restriction of $\sigma$ on $X$, is also called a subshift (see for example \cite{CC10}, for more details).

\medskip

The definitions and statements written in the rest of this section can be found in \cite{MICPET08}. We include some of the proofs for the sake of completeness.

\medskip
 Let $x\in \Sigma^G$ and let $\Gamma\subseteq G$ be a subgroup of finite index. We define  

\begin{align*}
    \Per(x,\Gamma,\alpha)&=\{g\in G\colon x(\gamma g)=\alpha \mbox{ for every }\gamma\in\Gamma\}, \mbox{ for every } \alpha\in \Sigma.\\
    \Per(x,\Gamma)&=\bigcup_{\alpha\in \Sigma}\Per(x,\Gamma,\alpha).
\end{align*}
The elements of $\Per(x,\Gamma)$ are those belonging to some coset $\Gamma g $ for which $x$ restricted to $\Gamma g$ is constant.

\begin{remark}
 {\rm Observe that if $\Gamma$ is normal then $g\in \Per(x,\Gamma)$ if and only if $x(g)=x(\gamma g)=x(g\gamma)$ for every $\gamma\in \Gamma$.  }
\end{remark}

\begin{remark}{\rm  If $\Gamma$ is a normal subgroup of $G$, we use indistinctly left and right cosets.   For a subgroup which is not necessarily normal, we will specify if we use left or right cosets.  }
\end{remark}

An element $\eta\in \Sigma^G$ is a \textit{Toeplitz array} or a {\em Toeplitz element}, if for every $g\in G$ there exists a finite index subgroup $\Gamma$ of $G$ such that $g\in \Per(\eta,\Gamma)$. The finite index subgroup $\Gamma$ is a {\it group of periods of} $\eta$ if $\Per(\eta,\Gamma)\neq \emptyset$. Observe that for every $g\in G$, for every $\alpha\in\Sigma$  and every group of periods $\Gamma$ of $\eta$ we have   $g\Per(\eta,g^{-1}\Gamma g,\alpha)=\Per(\sigma^g\eta,\Gamma,\alpha)$. 
A group of periods $\Gamma$ of $\eta$ is an {\it essential group of periods of} $\eta$ if $\Per(\eta,\Gamma,\alpha)\subseteq  \Per(\sigma^g\eta,\Gamma,\alpha)$ for every $\alpha\in \Sigma$, implies   $g\in \Gamma$.

 \begin{lemma}
Let $\eta\in\Sigma^G$ be a Toeplitz array. For every group of periods $\Gamma$ of $\eta$, there exists an essential group of periods $K$ of $\eta$ such that $\Per(\eta,\Gamma,\alpha)\subseteq\Per(\eta, K, \alpha)$, for every $\alpha\in\Sigma$.
 \end{lemma}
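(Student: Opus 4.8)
The plan is to take $K$ to be the stabilizer of the periodic structure that $\eta$ determines on its $\Gamma$-periodic positions, i.e. the set of group elements whose left translation preserves the coloring $\alpha\mapsto\Per(\eta,\Gamma,\alpha)$. The first move is to reduce to a \emph{normal} group of periods: left translation by a fixed $g$ sends a coset $\Gamma h$ to $g\Gamma h$, which is again of the form $\Gamma(\cdot)$ only when $\Gamma\trianglelefteq G$, so the stabilizer idea only makes sense in the normal case. I would therefore replace $\Gamma$ by its normal core $N=\bigcap_{g\in G}g\Gamma g^{-1}$, a finite index normal subgroup contained in $\Gamma$. Since $N\subseteq\Gamma$, constancy of $\eta$ on a coset $\Gamma h$ forces constancy on $Nh$, whence $\Per(\eta,\Gamma,\alpha)\subseteq\Per(\eta,N,\alpha)$ for every $\alpha$; in particular $N$ is again a group of periods. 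It thus suffices to produce an essential group of periods $K$ with $\Per(\eta,N,\alpha)\subseteq\Per(\eta,K,\alpha)$, since then $\Per(\eta,\Gamma,\alpha)\subseteq\Per(\eta,K,\alpha)$ follows by transitivity.

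Write $D_\alpha:=\Per(\eta,N,\alpha)$; each $D_\alpha$ is a union of cosets of $N$, and by normality the identity $\Per(\sigma^g\eta,N,\alpha)=gD_\alpha$ holds. I would then set
\[
K:=\{\, g\in G : \Per(\sigma^g\eta,N,\alpha)=\Per(\eta,N,\alpha)\ \text{for every}\ \alpha\in\Sigma \,\},
\]
equivalently the set of $g$ with $gD_\alpha=D_\alpha$ for all $\alpha$, i.e. the stabilizer in $G/N$ of the finite coloring of $G/N$ given by the $D_\alpha$. From this description $K$ is manifestly a subgroup containing $N$, hence of finite index; closure under products and inverses can also be checked by hand using $\Per(\sigma^{g_1g_2}\eta,N,\alpha)=g_1\Per(\sigma^{g_2}\eta,N,\alpha)$.

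Next I would establish the two remaining assertions. For the inclusion $\Per(\eta,N,\alpha)\subseteq\Per(\eta,K,\alpha)$, monotonicity in the subgroup gives $\Per(\eta,K,\alpha)\subseteq\Per(\eta,N,\alpha)$ for free, while the reverse inclusion is precisely the statement that every $\kappa\in K$ carries a coset $Nh$ colored $\alpha$ to a coset colored $\alpha$, which is exactly the stabilizer condition; this yields the equality $\Per(\eta,K,\alpha)=D_\alpha$, so that $K$ is indeed a group of periods and the required inclusion holds. For essentiality, assume $\Per(\eta,K,\alpha)\subseteq\Per(\sigma^g\eta,K,\alpha)$ for all $\alpha$. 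Restricting the defining condition of the right-hand side to $\kappa\in N\subseteq K$ and using normality to rewrite $g^{-1}\nu=(g^{-1}\nu g)g^{-1}$, one obtains $\eta\equiv\alpha$ on $Ng^{-1}h$ for each $h\in D_\alpha$, that is $g^{-1}D_\alpha\subseteq D_\alpha$ for every $\alpha$.

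The main obstacle, and the point where finite index is indispensable, is upgrading these inclusions to equalities. Because $G/N$ is finite and left translation by $g^{-1}$ permutes its cosets, the inclusion $g^{-1}D_\alpha\subseteq D_\alpha$ forces $g^{-1}D_\alpha=D_\alpha$ for every $\alpha$, so $g^{-1}$ (and hence $g$) stabilizes the coloring and therefore lies in $K$; this proves $K$ essential. This cardinality argument is exactly what would fail without finiteness, and it is the reason I would route the proof through the normal core $N$ rather than attempt to handle the non-normal $\Gamma$ directly.
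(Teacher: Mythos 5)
Your proof is correct and follows essentially the same route as the paper's: pass to a finite-index normal subgroup $N\subseteq\Gamma$ (the paper takes an arbitrary one, you take the normal core), define $K$ as the stabilizer of the coloring $\alpha\mapsto\Per(\eta,N,\alpha)$ of $G/N$, and use finiteness of $G/N$ to upgrade the inclusion $g^{-1}\Per(\eta,N,\alpha)\subseteq\Per(\eta,N,\alpha)$ to an equality. Your choice to define $K$ by the equality $g D_\alpha=D_\alpha$ rather than the inclusion makes the subgroup property immediate, but the argument is otherwise the same.
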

\begin{proof}
Let $\Gamma$ be a group of periods of $\eta$. There exists a normal finite index subgroup $H$ of $G$ such that $H\subseteq \Gamma$ (see for example \cite[Lemma 1]{MICPET08}). Observe that $\Per(\eta,\Gamma,\alpha)\subseteq \Per(\eta, H, \alpha)$, for every $\alpha\in \Sigma$. Since $w\in \Per(\eta, H, \alpha)$ if and only if $Hw\subseteq \Per(\eta, H, \alpha),$ the set $\Per(\eta, H, \alpha)$ is a disjoint union of right  equivalence classes in $G/H$. Namely,    $\Per(\eta, H, \alpha)=Hw_1\cup\dots\cup Hw_m$. Thus for every $g\in G$, the set $g\Per(\eta,H,\alpha)$ is also a disjoint union of the same number of equivalence classes. This implies that if 
$\Per(\eta,H,\alpha)\subseteq g\Per(\eta,g^{-1}Hg,\alpha)=g\Per(\eta,H,\alpha)$ then $\Per(\eta,H,\alpha) =g\Per(\eta,H,\alpha)$. From this it follows that $K$, the set of all $g\in G$ such that $\Per(\eta,H,\alpha) \subseteq g\Per(\eta,g^{-1}Hg,\alpha)$ for every  $\alpha\in \Sigma$, is a group. Since $K$ contains $H$, $K$ is a finite index subgroup. Furthermore, if $w\in \Per(\eta,H,\alpha)$ then $gw\in \Per(\eta,H,\alpha)$, for every $g\in K$, which implies that $\Per(\eta, H,\alpha)\subseteq \Per(\eta, K,\alpha)$, for every $\alpha\in \Sigma$, because  $\alpha=\eta(w)=\eta(gw)$, for every $g\in K$. On the other hand, if $\Per(\eta,K,\alpha)\subseteq g\Per(\eta, g^{-1}Kg,\alpha)$, then for $w\in \Per(\eta, H, \alpha)$ we have $g^{-1}w\in \Per(\eta, g^{-1}Kg,\alpha)$, which implies $\alpha=\eta(g^{-1}w)=\eta(g^{-1}kw)$, for every $k\in K$. Since $H\subseteq K$ is a normal subgroup, in particular we have $\alpha=\eta(hg^{-1}w)$, for every $h\in H$. This implies $g^{-1}w\in \Per(\eta, H, \alpha)$ and then $g\in K$. This shows that $K$ is an essential group of periods.
\end{proof}
 
  If $\eta\in \Sigma^G$ is a Toeplitz array, then there exists a {\it period structure}  $(\Gamma_n)_{n\in\mathbb{N}}$ of $\eta$, that is, a nested sequence of essential periods of $\eta$ such that $G=\bigcup_{n\in\mathbb{N}}\Gamma_n$ (see \cite[Corollary 6]{MICPET08}).

 A \textit{Toeplitz $G$-subshift} or simply a {\em Toeplitz subshift}, is the subshift generated by the closure of the $\sigma$-orbit of a Toeplitz array.
 
Let $\eta\in \Sigma^G$ be a Toeplitz array and let $(\Gamma_n)_{n\in\mathbb{N}}$ be a period structure of $\eta$. Let $X=\overline{O_{\sigma}(\eta)}$ be the associated Toeplitz subshift. For each $n\in\mathbb{N}$, we define
 $$C_n=\{ x\in X: \Per(x,\Gamma_n, \alpha)=\Per(\eta, \Gamma_n, \alpha), \mbox{ for every } \alpha\in \Sigma\}.$$
Using the fact that every $\Gamma_n$ is an essential group of periods, it is possible to verify that $\sigma^g\eta\in C_n$ if and only if $g\in \Gamma_n$. This implies  following result.
 \begin{lemma}[{\cite[Lemma 8 and Proposition 6]{MICPET08}}]
 For every $n\in\mathbb{N}$, the set $C_n$ is closed. In addition, for every $g,h\in G$ the following statements are equivalent:
 \begin{enumerate}
\item $\sigma^gC_n\cap \sigma^hC_n\neq \emptyset.$
\item $\sigma^gC_n=\sigma^hC_n.$
\item $g\Gamma_n=h\Gamma_n$.
 \end{enumerate}
From this we get that the collection $\{\sigma^{g^{-1}}C_n: g\in D_n\}$ is a clopen partition of $X$.
\end{lemma}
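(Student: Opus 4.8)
The plan is to reduce everything to the single algebraic identity
$\Per(\sigma^g x,\Gamma_n,\alpha)=g\,\Per(x,\Gamma_n,\alpha)$, valid for all $x\in X$ and all $\alpha\in\Sigma$ because $\Gamma_n$ is normal (in general one conjugates, using $g\Per(x,g^{-1}\Gamma_n g,\alpha)=\Per(\sigma^g x,\Gamma_n,\alpha)$), together with the already-noted fact that $\sigma^g\eta\in C_n$ if and only if $g\in\Gamma_n$, and with the essentiality of $\Gamma_n$. Throughout I write $P_\alpha:=\Per(\eta,\Gamma_n,\alpha)$, which is a union of $\Gamma_n$-cosets, and I record that $\eta\in C_n$ since $1_G\in\Gamma_n$. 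With these tools the three equivalences are formal; the closedness of $C_n$, and hence the clopen partition, is where the real work lies.

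For the equivalences I would argue as follows. To see $(3)\Rightarrow(2)$: if $\gamma\in\Gamma_n$ then $\gamma P_\alpha=P_\alpha$ because $P_\alpha$ is a union of $\Gamma_n$-cosets, so the identity gives $\sigma^\gamma C_n=C_n$; consequently $\sigma^g C_n$ depends only on the coset $g\Gamma_n$, yielding $g\Gamma_n=h\Gamma_n\Rightarrow\sigma^gC_n=\sigma^hC_n$. The implication $(2)\Rightarrow(1)$ is immediate, since $\sigma^gC_n\ni\sigma^g\eta$ is nonempty. For $(1)\Rightarrow(3)$, a point of $\sigma^gC_n\cap\sigma^hC_n$ produces $x,y\in C_n$ with $\sigma^{k}x=y$ for $k=h^{-1}g$; applying the identity and $x,y\in C_n$ gives $kP_\alpha=\Per(\sigma^kx,\Gamma_n,\alpha)=\Per(y,\Gamma_n,\alpha)=P_\alpha$ for every $\alpha$. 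In particular $P_\alpha\subseteq kP_\alpha$ for all $\alpha$, so the essentiality of $\Gamma_n$ forces $k\in\Gamma_n$, that is $g\Gamma_n=h\Gamma_n$.

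The main obstacle is the closedness of $C_n$. The defining equalities split into two families of conditions. The requirement $P_\alpha\subseteq\Per(x,\Gamma_n,\alpha)$ for all $\alpha$ is the intersection of the cylinders $\{x:x(\gamma g)=\alpha\}$ over $g\in P_\alpha$ and $\gamma\in\Gamma_n$, hence closed. The reverse requirement $\Per(x,\Gamma_n,\alpha)\subseteq P_\alpha$, namely that $x$ has no $\Gamma_n$-periodic coset beyond those of $\eta$, is a priori only \emph{open}, since constancy of $x$ on a coset is a closed condition and its negation is open. The crux is therefore to show that inside $X=\overline{O_\sigma(\eta)}$ this second family is automatically satisfied, so that $C_n$ coincides with the closed set cut out by the first family. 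I would establish this through the minimality of the Toeplitz subshift: a Toeplitz array generates a minimal subshift, so $\eta\in\overline{O_\sigma(x)}$ for every $x\in X$, and one proves that the $\Gamma_n$-skeleton of any $x\in X$ is exactly a $G$-translate of the skeleton of $\eta$. Thus no point of $X$ can acquire extra $\Gamma_n$-periodic cosets in a limit $\sigma^{g_k}\eta\to x$, because each $\sigma^{g_k}\eta$ carries the same finite number of periodic cosets and essentiality (together with the finiteness of $G/\Gamma_n$) pins down that number. Isolating this structural statement—essentially the content imported from \cite[Proposition 6]{MICPET08}—is the hardest and least formal step.

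Granting closedness, the clopen partition follows at once. By $(1)\Leftrightarrow(2)\Leftrightarrow(3)$ the translates $\sigma^gC_n$ assume exactly $[G:\Gamma_n]$ distinct values, indexed by cosets, and distinct ones are disjoint. Fixing a transversal $D_n$ of $G/\Gamma_n$, the finitely many closed sets $\{\sigma^{g^{-1}}C_n:g\in D_n\}$ are pairwise disjoint, and since $\eta\in C_n$ every orbit point $\sigma^h\eta$ lies in one of them; hence their union is a closed set containing the dense orbit $O_\sigma(\eta)$, so it equals $X$. A finite cover of $X$ by pairwise disjoint closed sets consists of clopen sets, so $\{\sigma^{g^{-1}}C_n:g\in D_n\}$ is a clopen partition of $X$, as claimed.
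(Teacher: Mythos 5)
Your treatment of the three equivalences and of the final partition statement is essentially sound (modulo a small caveat below), and you correctly diagnose that the entire difficulty is the closedness of $C_n$. But at exactly that point the proposal stops being a proof. The heuristic you offer --- ``each $\sigma^{g_k}\eta$ carries the same finite number of periodic cosets and essentiality \ldots pins down that number'' --- does not work as a counting argument: constancy on a fixed coset $\Gamma_n h$ is a \emph{closed} condition, so a limit $x$ of translates $\sigma^{g_k}\eta$ can perfectly well acquire periodic cosets that none of the approximants has (periodicity can be gained in the limit, not lost), and the skeletons of the approximants are translates sitting over \emph{varying} cosets, so their cardinalities cannot be compared to that of $x$ without further input. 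You acknowledge this step is ``the hardest and least formal'' and in effect import it from the cited reference, which is precisely the content the lemma asks you to prove. The paper supplies the missing input: it sets $X_n=\overline{\{\sigma^{\gamma}\eta:\gamma\in\Gamma_n\}}$, invokes Auslander's theorem to show that the restricted action of the finite-index subgroup $\Gamma_n$ on each $\Gamma_n$-orbit closure in $X$ is minimal, and uses this minimality twice --- once to get $\Per(x,\Gamma_n,\alpha)\subseteq\Per(\eta,\Gamma_n,\alpha)$ for $x\in X_n$ (so $X_n\subseteq C_n$), and once, combined with essentiality, to show $C_n\subseteq X_n$. Clopenness then falls out because $X$ is a \emph{finite} disjoint union of $\Gamma_n$-minimal components, one of which is $C_n=X_n$. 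Your observation that $C_n$ equals the set cut out by the closed family of conditions alone is in fact correct, but proving it requires exactly this minimality machinery; without it the gap remains.

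A secondary point: in (1)$\Rightarrow$(3) you apply essentiality to a translate of an arbitrary $x\in C_n$ via the identity $\Per(\sigma^k x,\Gamma_n,\alpha)=k\Per(x,\Gamma_n,\alpha)$. That identity is valid only when $\Gamma_n$ is normal; the lemma is stated for a general period structure, where the correct identity is $\Per(\sigma^k x,\Gamma_n,\alpha)=k\Per(x,k^{-1}\Gamma_n k,\alpha)$, and membership of $x$ in $C_n$ gives you no control over $\Per(x,k^{-1}\Gamma_n k,\alpha)$. The paper sidesteps this by first proving the implication for $\eta$ itself (where essentiality applies verbatim) and then transferring it to all of $C_n$ using density of the $\Gamma_n$-orbit of $\eta$ in $C_n$ together with the openness of $C_n$ --- which again relies on the structural identification $C_n=X_n$ that your proposal leaves unproved.
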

\begin{proof} 
Let denote $X_n=\overline{\{\sigma^{\gamma}\eta\colon \gamma\in\Gamma_n\}}$, for each $n\in\mathbb{N}$. It is straightforward to check that  $\Per(\eta,\Gamma_n,\alpha)\subseteq \Per(x,\Gamma_n,\alpha)$, for every $x\in X_n$ and $\alpha\in \Sigma$. Let $H\subseteq \Gamma_n$ be a normal finite index subgroup of $G$. From \cite[Theorem 1.13]{Au88}, for every $x\in X$ the action of $H$ on  $\overline{\{\sigma^{h}x \colon h\in H\}}$ is minimal. Since $H$ is a finite index subgroup of $\Gamma_n$, applying again \cite[Theorem 1.13]{Au88}, we get that for every $x\in X$ the action of $\Gamma_n$ on  $\overline{\{\sigma^{\gamma}x \colon \gamma\in \Gamma_n\}}$  is minimal. In particular, 
  the action of $\Gamma_n$ on  $X_n$ is minimal. From this  we have $\Per(x,\Gamma_n,\alpha)\subseteq \Per(\eta,\Gamma_n,\alpha)$ for every $x\in X_n$ and $\alpha\in \Sigma$, which implies that $X_n\subseteq C_n$. On the other hand, let $x\in C_n$ and $(g_i)_{i\in\mathbb{N}}$ be a sequence in $G$ such that $(\sigma^{g_i}x)_{i\in\mathbb{N}}$ converges to $\eta$. 
Since $\Gamma_n$ is a finite index subgroup, taking subsequences, there exists $w\in G$ such that $g_i=w\gamma_i$ for some $\gamma_i\in\Gamma_n$, for every $i\in\mathbb{N}$. This implies that $\sigma^{w^{-1}}\eta$ is in $Y=\overline{\{ \sigma^{\gamma}x: \gamma\in \Gamma_n\}}$ and then $\Per(\eta,\Gamma_n,\alpha)\subseteq \Per(\sigma^{w^{-1}}\eta, \Gamma_n, \alpha)$ for every $\alpha\in \Sigma$.
The property of being essential implies that $w\in \Gamma_n$ and then $\eta\in X_n\cap Y$. Since $Y$ is a minimal component with respect to the action of $\Gamma_n$,   we deduce that $Y=X_n$ and then $C_n=X_n$. Observe that $X$ is a finite disjoint union of minimal components for the action of $\Gamma_n$ (If $w_1,\dots,w_k$ are representatives of each coset in $\{g\Gamma_n: g\in G\}$, then $X$ is the union of the minimal components that contain the points $\sigma^{w_1}\eta, \dots, \sigma^{w_k}\eta$), this implies that $X_n=C_n$ is clopen. From the property of being an essential period, it follows that $\sigma^g\eta\in C_n$ implies $g\in\Gamma_n$. From the minimality of the action of $G$ and the fact that $C_n$ is open,  we can infer that if $\sigma^gx\in C_n$, then $g\in\Gamma_n$, for every $x\in C_n$. This implies the results of the Lemma.
\end{proof}

\begin{remark}
{\rm  Observe that the stabilizer of $x\in X$ is the group $\bigcap_{n\in\mathbb{N}} v_n\Gamma_n v_n^{-1}$, where $v_n\in G$ is such that $x\in \sigma^{v_n}C_n$. Thus if the groups $\Gamma_n$ are normal subgroups of $G$, the Toeplitz subshift $(X,\sigma, G)$ is free if and only if $\bigcap_{n\in\mathbb{N}}\Gamma_n=\{1_G\}$.     }
\end{remark}

\begin{proposition}[{\cite[Proposition 5, Theorem 2 and Proposition 7]{MICPET08}}]\label{1-1-extension}
Let $\eta\in \Sigma^G$ be a Toeplitz array and let $X=\overline{\{\sigma^g\eta: g\in G\}}$. Suppose that $(X,\sigma|_X,G)$ is free and   $(\Gamma_n)_{n\in\mathbb{N}}$ is a period structure of $\eta$. Let $\overleftarrow{G}$ be the $G$-odometer associated to $(\Gamma_n)_{n\in\mathbb{N}}$. The map $\pi:X\to \overleftarrow{G}$ given by
$$
\pi(x)=(g_n\Gamma_n)_{n\in\mathbb{N}}, \mbox{ where  } x\in \sigma^{g_n}C_n, \mbox{ for every } n\in\mathbb{N}
$$
is an almost 1-1 factor map (then $\overleftarrow{G}$ is the maximal equicontinuous factor of $X$). Moreover,
$$
\{x\in  X: x\mbox{ is a Toeplitz array }   \}=\pi^{-1}\{y\in \overleftarrow{G}: |\pi^{-1}\{y\}|=1\}.
$$
\end{proposition}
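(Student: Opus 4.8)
The plan is to establish Proposition~\ref{1-1-extension} in three stages: first verify that $\pi$ is a well-defined continuous $G$-equivariant surjection, then identify the points with a unique preimage as exactly the Toeplitz arrays in $X$, and finally deduce the almost 1-1 property and the maximal equicontinuous factor conclusion.

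\emph{Step 1 (well-definedness and continuity).} The preceding lemma shows that for each $n$ the collection $\{\sigma^{g^{-1}}C_n : g \in D_n\}$ (where $D_n$ is a transversal of $G/\Gamma_n$) is a clopen partition of $X$, and that $\sigma^g C_n \cap \sigma^h C_n \neq \emptyset$ forces $g\Gamma_n = h\Gamma_n$. Hence each $x \in X$ lies in exactly one $\sigma^{g_n}C_n$, and the coset $g_n\Gamma_n$ is uniquely determined; this makes $\pi(x) = (g_n\Gamma_n)_n$ well-defined provided the coordinates are compatible under the projections $\varphi_n$. Compatibility follows because the $(\Gamma_n)_n$ are nested, so $\sigma^{g_{n+1}}C_{n+1} \subseteq \sigma^{g_n}C_n$ forces $g_{n+1}\Gamma_n = g_n\Gamma_n$. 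Continuity is immediate since the partition into clopen sets means each coordinate of $\pi$ is locally constant, and $G$-equivariance is a direct check: $\sigma^h x \in \sigma^{hg_n}C_n$, so $\pi(\sigma^h x) = (hg_n\Gamma_n)_n = \phi^h \pi(x)$.

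\emph{Step 2 (fibers and surjectivity).} The heart of the argument is the characterization of the single-point fibers. I would show $\pi^{-1}\{y\}$ is a singleton precisely when $y$ corresponds to a Toeplitz array. If $x \in X$ is a Toeplitz array, then every $g \in G$ eventually belongs to $\Per(x,\Gamma_n)$ for large $n$; combined with freeness and the stabilizer computation in the preceding remark (namely the stabilizer is $\bigcap_n v_n\Gamma_n v_n^{-1}$, trivial by freeness), this pins down $x$ uniquely among all points with the same $\pi$-image, because knowing the cosets $g_n\Gamma_n$ for all $n$ determines the value of $x$ at every coordinate through the periodicity relations. Conversely, if $x$ is not Toeplitz, there is a $g$ lying in no $\Per(x,\Gamma_n)$, and one produces a second point $x' \neq x$ with $\pi(x') = \pi(x)$ by perturbing the value at such a non-periodic position while staying in $X$ (using minimality to realize the perturbed pattern as a limit of shifts of $\eta$). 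Surjectivity of $\pi$ follows from minimality of $X$ together with the fact that the image is closed, $G$-invariant, and contains the dense orbit image.

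\emph{Step 3 (almost 1-1 and conclusion).} Since $\overleftarrow{G}$ is minimal, by the criterion recalled in the preliminaries it suffices to exhibit a single $y \in \overleftarrow{G}$ with $|\pi^{-1}\{y\}| = 1$; the point $\pi(\eta)$ works because $\eta$ is a Toeplitz array, so by Step 2 its fiber is a singleton. This gives that $\pi$ is almost 1-1, and then by \cite[Proposition 5.5.]{Krieger} (quoted in the preliminaries) $\overleftarrow{G}$ is the maximal equicontinuous factor of $X$. The final displayed equality is exactly the content of Step 2 packaged as a set identity.

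\emph{Main obstacle.} I expect the delicate point to be the forward direction of Step 2: proving that being a Toeplitz array is \emph{necessary} for a singleton fiber, i.e., constructing the distinct second preimage $x'$ when some coordinate of $x$ fails to be periodic with respect to any $\Gamma_n$. This requires carefully using minimality of the $\Gamma_n$-action on $C_n = X_n$ (established in the previous lemma) to guarantee that the locally modified configuration genuinely lies in the subshift $X$, rather than merely in $\Sigma^G$. The freeness hypothesis is essential here, and tracking how the essential-period property controls the fiber structure is where the argument needs the most care.
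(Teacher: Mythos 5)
The paper does not actually prove this proposition: it is imported verbatim from \cite{MICPET08} (Proposition 5, Theorem 2 and Proposition 7 there), so there is no in-paper argument to compare yours against. Your overall architecture (well-definedness/equivariance from the clopen partitions $\{\sigma^{g^{-1}}C_n\}$, characterization of singleton fibers, then almost 1-1 and the maximal equicontinuous factor via \cite[Proposition 5.5]{Krieger}) is the standard route and Step 1 is essentially complete. But Step 2, which you correctly identify as the heart of the matter, has a genuine gap in \emph{both} directions, and one of them is a circularity within this paper.

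For the direction ``$x$ Toeplitz $\Rightarrow$ singleton fiber'' you assert that every $g\in G$ lies in $\Per(x,\Gamma_n)$ for $n$ large. Being a Toeplitz array only gives $g\in\Per(x,\Gamma)$ for \emph{some} finite-index $\Gamma$, not one of the $\Gamma_n$'s from the period structure of $\eta$; upgrading this is precisely Lemma \ref{Periods-Toeplitz} of the paper, whose proof invokes Proposition \ref{1-1-extension} itself (it uses the factor map onto $\overleftarrow{H}$ and the maximal-equicontinuous-factor property). So as written this step is either unproved or circular, and freeness is not what closes it — freeness plays no role in pinning down a point inside a fiber. For the converse direction, ``perturbing the value at a non-periodic position while staying in $X$'' is not a construction; the correct mechanism is: if $g_n^{-1}g\notin\Per(\eta,\Gamma_n)$ for all $n$, then for each $n$ at least two of the closed sets $F_{n,\alpha}=\{z\in\sigma^{g_n}C_n: z(g)=\alpha\}$ are nonempty, these sets are nested in $n$, and a pigeonhole-plus-compactness argument over the finite alphabet yields two points of $\bigcap_n\sigma^{g_n}C_n$ differing at $g$. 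Finally, note that the almost 1-1 claim does not need Step 2 at all: $\pi^{-1}(\pi(\eta))=\bigcap_n C_n$, and any $x$ there satisfies $\Per(x,\Gamma_n,\alpha)=\Per(\eta,\Gamma_n,\alpha)$ for all $n,\alpha$, hence agrees with $\eta$ on $\bigcup_n\Per(\eta,\Gamma_n)=G$ by the definition of a period structure; routing it through the (harder, and here circular) forward direction of Step 2 is an unnecessary detour.
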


\subsection{Good sequences of tiles for residually finite groups}\label{subsec:rflemmas}

In \cite{CP14} it was shown that amenable residually finite groups have   F\o lner sequences with additional properties. We prove in Lemmas \ref{fundamental} and \ref{decom} that when the residually finite group $G$ is not amenable, it has
sequences of finite subsets having similar properties, without being F\o lner.

\begin{lemma}\label{fundamental}
Let $G$ be a residually finite group, and let $(\Gamma_n)_{n\in \mathbb{N}}$ be a strictly decreasing sequence of normal finite index subgroups of $G$, such that $\bigcap_{n\in\mathbb{N}}\Gamma_n=\{1_G\}.$ There exists an increasing sequence $(n_i)_{i\in \mathbb{N}}\subseteq \mathbb{N}$ and $(D_i)_{i\in \mathbb{N}}$ a sequence of finite subsets of $G$ such that for every $i\in \mathbb{N}$
\begin{enumerate}
    \item $1_G\in D_1$ and $D_i\subseteq D_{i+1}$.
    \item $D_i$ is a fundamental domain of $\Gamma_{n_i}$.
    \item $G=\bigcup_{i\in \mathbb{N}} D_i$.
   \item $D_{i+1}=\bigcup_{v\in D_{i+1}\cap \Gamma_{n_i}}vD_i$. 
\end{enumerate}
\begin{proof}
Observe that if $S\subseteq G$ is a finite set, then there exists $l\geq 1$ such that if $g_1$ and $g_2$ are two different elements in $S$,  then  $g_1\Gamma_l\neq g_2\Gamma_l$. Indeed, suppose there exists a finite subset $S$ of $G$ such that for every $l\geq 1$ there are two different elements $g_l$ and $h_l$ in $S$ such that $g_l\Gamma_l=h_l\Gamma_l$.  Since $S$ is finite, there exists a subsequence $(\Gamma_{l_i})_{i\in \mathbb{N}}$ such that $g_{l_i}=g$ and $h_{l_i}=h$, for every $i\in \mathbb{N}$, where $g,h\in G$. This implies that $h^{-1}g\in \bigcap_{i\in \mathbb{N} }\Gamma_{l_i}=\bigcap_{i\geq 0}\Gamma_i=\{1_G\}$, which is a contradiction with the fact that $g\neq h$. 

 Suppose $G=\{g_1,g_2,\dots\}$ with $1_G=g_1$. We will construct a sequence $(F_i)_{i\in\mathbb{N}}$ of finite subsets of $G$ verifying points (1), (2) and (3): choose $F_1$ as a fundamental domain of $G/\Gamma_1$ such that $g_1\in F_1$. Let $\Gamma_{n_1}=\Gamma_1$. Let $S_2=F_1\cup\{g_1,g_2\}$. By the previous discussion, there exists $\Gamma_{n_2}\subseteq \Gamma_{n_1}$ satisfying $g\Gamma_{n_2}\neq h\Gamma_{n_2}$  for every pair of distinct elemetns $g,h\in S_2$. Thus, we can take $F_2$ such that $S_2\subseteq F_2$.
Suppose we have defined $\Gamma_{n_{k-1}}$ and $F_{k-1}$ such that $\Gamma_{n_{k-1}}\subseteq \Gamma_{n_{k-2}}$ and $F_{k-1}$ is a fundamental domain of $\Gamma_{n_{k-1}}$ containing the set $F_{k-2}\cup \{g_1,\dots, g_{k-1}\}$. Now, consider $S_k=\{g_1,g_2,\dots, g_k\}\cup F_{k-1}$. By the previous discussion, we can choose $\Gamma_{n_k}\subseteq \Gamma_{n_{k-1}}$ such that the elements in $S_k$ are in different equivalence classes of $G/\Gamma_{n_k}$. Therefore, we can get $F_k$ satisfying $S_k\subseteq F_k$.
  
Thus, we obtain \textit{(1)} and \textit{(2)}. Additionally, 
\begin{align*}
    G=\bigcup_{i=1}^\infty S_i\subseteq  \bigcup_{i=1}^\infty F_i,
\end{align*}
which implies \textit{(3)}.

Now we will apply \cite[Lemma 3]{CP14} in order to get a sequence $(D_i)_{i\in\mathbb{N}}$ as desired. Define   $n_{i_1}=n_1$ and $D_1=F_1$. Let $j>1$ and suppose that we have defined $n_{i_1}<\dots<n_{i_{j-1}}$ and $D_{j-1}$ a fundamental domain of $G/\Gamma_{n_{i_{j-1}}}$. Since $G=\bigcup_{v\in\Gamma_{n_{i_{j-1}}}} vD_{j-1}$, there exists $i_j>i_{j-1}$ such that 
    \begin{align*}
        F_{i_{j-1}}\subseteq \bigcup_{v\in F_{i_j}\cap\Gamma_{n_{i_{j-1}}}} vD_{j-1}.
    \end{align*}
    
Let define $D_j:=\bigcup_{v\in F_{i_j}\cap\Gamma_{n_{i_{j-1}}}} vD_{j-1}$. By construction, the sequence $(D_j)_{j\in\mathbb{N}}$ verifies (3). Applying 
\cite[Lemma 3]{CP14} we get that $(D_j)_{j\in\mathbb{N}}$ satisfies (1), (2) and (4) for the subsequence $(\Gamma_{n_{i_j}})_{j\in\mathbb{N}}$.
 \end{proof}
\end{lemma}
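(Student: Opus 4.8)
The plan is to isolate the combinatorial core of the statement and then build the two sequences in stages: first an auxiliary family of nested fundamental domains realizing conditions (1)--(3), and afterwards a subsequence refinement that additionally forces the self-similar tiling condition (4). The only place where residual finiteness (via $\bigcap_n \Gamma_n = \{1_G\}$) is genuinely used is in a separation principle, which I would establish at the outset.

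First I would prove the following separation fact: for every finite set $S \subseteq G$ there exists an index $l$ such that distinct elements of $S$ lie in distinct cosets of $\Gamma_l$. I would argue by contradiction. If no such $l$ existed, then for each $l$ one could choose distinct $g_l, h_l \in S$ with $g_l\Gamma_l = h_l\Gamma_l$; since $S$ is finite, passing to a subsequence makes $g_l \equiv g$ and $h_l \equiv h$ constant, so that $h^{-1}g \in \bigcap_l \Gamma_l = \{1_G\}$, forcing $g = h$, a contradiction. Because the sequence $(\Gamma_n)$ is decreasing, once a separating index is found it remains separating for all later indices, which is what makes it usable in an inductive construction.

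Second, I would enumerate $G = \{g_1, g_2, \ldots\}$ with $g_1 = 1_G$ and build nested fundamental domains $F_1 \subseteq F_2 \subseteq \cdots$ recursively. I would start with any fundamental domain $F_1$ of $\Gamma_{n_1} := \Gamma_1$ containing $1_G$. Given $F_{k-1}$, a fundamental domain of $\Gamma_{n_{k-1}}$, I would apply the separation principle to the finite set $S_k = \{g_1,\ldots,g_k\} \cup F_{k-1}$ to obtain $\Gamma_{n_k} \subseteq \Gamma_{n_{k-1}}$ whose cosets separate $S_k$, and then complete $S_k$ to a full fundamental domain $F_k$ of $\Gamma_{n_k}$ (possible precisely because the elements of $S_k$ already sit in distinct cosets). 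Since $g_k \in F_k$ for every $k$, the inclusion $G = \bigcup_k S_k \subseteq \bigcup_k F_k$ holds, so the $F_i$ are nested fundamental domains of a decreasing subsequence $(\Gamma_{n_i})$ that exhaust $G$, giving (1)--(3).

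The remaining and most delicate point is upgrading this to the self-similar tiling property (4), that each $D_{i+1}$ be an exact union of the translates $\{vD_i : v \in D_{i+1} \cap \Gamma_{n_i}\}$. Here I would pass to a further subsequence and define the $D_j$ recursively: set $D_1 = F_1$, and having built a fundamental domain $D_{j-1}$ of $\Gamma_{n_{i_{j-1}}}$, use the tiling $G = \bigcup_{v \in \Gamma_{n_{i_{j-1}}}} vD_{j-1}$ to pick $i_j > i_{j-1}$ large enough that $F_{i_{j-1}}$ is already covered by $\{vD_{j-1} : v \in F_{i_j} \cap \Gamma_{n_{i_{j-1}}}\}$, and then set $D_j = \bigcup_{v \in F_{i_j}\cap\Gamma_{n_{i_{j-1}}}} vD_{j-1}$. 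By construction $D_j$ is a union of $\Gamma_{n_{i_{j-1}}}$-translates of $D_{j-1}$, so (4) is immediate; what must be checked is that $D_j$ is still a genuine fundamental domain of $\Gamma_{n_{i_j}}$ and that nesting and exhaustion survive. This verification is exactly the refinement statement \cite[Lemma 3]{CP14}, which I would invoke to close the argument. I expect the main obstacle to be reconciling the tiling condition (4) with the fundamental-domain requirement (2) simultaneously: enlarging a transversal so that it becomes an exact union of translates of the previous one risks either double-covering a coset or leaving one uncovered, so the index $i_j$ must be chosen with care and the transversal property must be re-established at each stage rather than assumed.
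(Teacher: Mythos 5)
Your proposal is correct and follows essentially the same route as the paper's proof: the same coset-separation principle derived from $\bigcap_n\Gamma_n=\{1_G\}$, the same recursive construction of nested fundamental domains $F_k$ containing $S_k=\{g_1,\dots,g_k\}\cup F_{k-1}$ to secure (1)--(3), and the same passage to a subsequence with $D_j$ defined as a union of translates of $D_{j-1}$, closing via \cite[Lemma 3]{CP14}. Your closing remark correctly identifies that re-establishing the transversal property at each stage is the content delegated to that cited lemma, exactly as in the paper.
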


The next Lemma follows by induction applying Lemma \ref{fundamental}.
\begin{lemma}\label{decom}
Let $G$ be a residually finite group, and let $(\Gamma_n)_{n\in \mathbb{N}}$ be a strictly decreasing sequence of normal finite index subgroups of $G$, such that $\bigcap_{n\in\mathbb{N}}\Gamma_n=\{1_G\}.$ There exists an increasing sequence $(n_i)_{i\in \mathbb{N}}\subseteq \mathbb{N}$ and $(D_i)_{i\in \mathbb{N}}$ a sequence of finite subsets of $G$ such that for every $i\in \mathbb{N}$
\begin{enumerate}
    \item $\{1_G\}\subseteq D_i\subseteq D_{i+1}$
    \item $D_i$ is a fundamental domain of $G/\Gamma_{n_i}$.
    \item $G=\bigcup_{i=1}^\infty D_i$.
    \item $D_j=\bigcup_{v\in D_j\cap \Gamma_{n_i}} vD_i$, for each $j>i\geq 1$.
\end{enumerate}
\end{lemma}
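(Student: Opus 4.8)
The plan is to invoke Lemma~\ref{fundamental} to produce a single pair of sequences $(n_i)_{i\in\mathbb{N}}$ and $(D_i)_{i\in\mathbb{N}}$ that already satisfies (1), (2), (3) together with the \emph{consecutive} decomposition $D_{i+1}=\bigcup_{v\in D_{i+1}\cap\Gamma_{n_i}}vD_i$. Properties (1)--(3) of Lemma~\ref{decom} are then literally those of Lemma~\ref{fundamental}, so the only work is to promote the consecutive case of (4) to all pairs $j>i$. I would fix $i$ and argue by induction on $j$, the base case $j=i+1$ being exactly property (4) of Lemma~\ref{fundamental}.

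For the inductive step I would assume $D_j=\bigcup_{w\in D_j\cap\Gamma_{n_i}}wD_i$ and apply the consecutive decomposition once more to write $D_{j+1}=\bigcup_{u\in D_{j+1}\cap\Gamma_{n_j}}uD_j$. Substituting the hypothesis into each block gives
\[
D_{j+1}=\bigcup_{u\in D_{j+1}\cap\Gamma_{n_j}}\ \bigcup_{w\in D_j\cap\Gamma_{n_i}}uw\,D_i .
\]
The crux is to identify this doubly-indexed family with the single family $\{vD_i: v\in D_{j+1}\cap\Gamma_{n_i}\}$. Here I would use that $(n_i)$ is increasing and $(\Gamma_n)$ decreasing, so that $\Gamma_{n_j}\subseteq\Gamma_{n_i}$ for $j>i$; consequently every product $uw$ lies in $\Gamma_{n_i}$, and since $w\in D_j$ forces $uw\in uD_j\subseteq D_{j+1}$, each $uw$ in fact belongs to $D_{j+1}\cap\Gamma_{n_i}$.

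The remaining point, which carries the real content, is that $(u,w)\mapsto uw$ is a bijection onto $D_{j+1}\cap\Gamma_{n_i}$. Injectivity and disjointness follow from the fundamental-domain property of $D_i$: in the partition $G=\bigsqcup_{\gamma\in\Gamma_{n_i}}\gamma D_i$ distinct values $v\in\Gamma_{n_i}$ give disjoint tiles $vD_i$, so two pairs with $uw\neq u'w'$ produce disjoint (hence distinct) tiles, while two pairs with $uw=u'w'$ must coincide. For surjectivity, given $v\in D_{j+1}\cap\Gamma_{n_i}$, the partition $D_{j+1}=\bigsqcup_u uD_j$ selects the unique $u\in D_{j+1}\cap\Gamma_{n_j}$ with $u^{-1}v\in D_j$; since $u^{-1}v\in\Gamma_{n_i}$ as well, the pair $(u,u^{-1}v)$ maps to $v$. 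This yields $D_{j+1}=\bigcup_{v\in D_{j+1}\cap\Gamma_{n_i}}vD_i$ and closes the induction.

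I expect the only delicate part to be the bookkeeping of the two nested subgroups $\Gamma_{n_j}\subseteq\Gamma_{n_i}$ and the verification that the products $uw$ remain inside $D_{j+1}$; once the disjointness built into the fundamental domains is invoked, the bijection is routine.
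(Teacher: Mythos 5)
Your proposal is correct and follows exactly the route the paper indicates: the paper gives no written proof beyond the remark that the lemma ``follows by induction applying Lemma~\ref{fundamental}'', and your induction on $j$ (substituting the consecutive decomposition $D_{j+1}=\bigcup_{u\in D_{j+1}\cap\Gamma_{n_j}}uD_j$ into the inductive hypothesis and checking that $(u,w)\mapsto uw$ lands bijectively in $D_{j+1}\cap\Gamma_{n_i}$) is precisely that argument, carried out carefully.
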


\section{Uniquely ergodic case: regular Toeplitz subshifts}\label{sec:regular}

In this section we prove  Theorem  \ref{theo:main1}.

\subsection{Factor maps and measures.}\label{subsection-measures}

Let $(X,\phi,G)$ and $(Y,\varphi,G)$ be two topological dynamical systems such that $(Y,\varphi,G)$ admits an invariant measure $\nu$. Let $\beta(X)$ and $\beta(Y)$ the Borel sigma-algebras of $X$ and $Y$ respectively. If $\pi:X\to Y$ is a factor map then the collection of sets
$\pi^{-1}\beta(Y)=\{\pi^{-1}(A): A\in \beta(Y)\}$
is a sigma-algebra contained in $\beta(X)$. Moreover, if $A\in \pi^{-1}\beta(Y)$ then $\phi^g(A)\in \beta(Y)$, for every $g\in G$.

\begin{remark}
{\rm If $\omega$ is an invariant probability measure of $(X,\phi,G)$, then\\ $\nu^*:\beta(Y)\to [0,1]$ defined as $\nu^*(A)=\omega(\pi^{-1}(A))$ for every $A\in \beta(Y)$,  is an invariant probability measure of $(Y,\varphi,G)$. Thus, if $(Y,\varphi,G)$ is uniquely ergodic then $\nu^*=\nu$, which implies that all the invariant probability measures of $(X,\phi,G)$   coincide on $\pi^{-1}\beta(Y)$.}
\end{remark}
Since
$$\{y\in Y: |\pi^{-1}(y)|=1\}=\bigcap_{n\geq 1}\left\{y\in Y: \diam(\pi^{-1}(y))<\frac{1}{n}\right\}, 
$$
the set $\{y\in Y: |\pi^{-1}(y)|=1\}$ is a $G_{\delta}$-set (see \cite{Dow05}). Thus, the set $$\{y\in Y: |\pi^{-1}\{y\}|>1\}$$ belongs to $\beta(Y)$.

\begin{lemma}\label{lem:measurelifting}
Let $\mu:\pi^{-1}\beta(Y)\to [0,1]$ be the map defined by $\mu(\pi^{-1}(A))=\nu(A)$, for every $A\in \beta(Y)$. The map $\mu$ is an invariant probability measure on $\pi^{-1}\beta(Y)$ and if $\nu(\{y\in Y: |\pi^{-1}\{y\}|>1\})=0$, then $\mu$ extends to a unique invariant probability measure on $\beta(X)$.
\end{lemma}
\begin{proof}
It is straightforward to check that $\mu$ is an invariant probability measure on $\pi^{-1}\beta(Y)$. Let $A\subseteq X$ be a closed set. Then $\pi(A)\in \beta(Y)$, because $\pi(A)$ is compact. Thus $B=\pi^{-1}(\pi(A))$ belongs to $\pi^{-1}\beta(Y)$. Observe that $A\subseteq B$ and 
$$B\setminus A\subseteq \pi^{-1}(\{y\in Y: |\pi^{-1}\{y\}|>1\}).$$  
This implies that $B\setminus A$ is a negligible set (which implies that $B\setminus A$ is in the  completion of $\pi^{-1}\beta(Y)$ with respect to $\mu$), and since $A=B\setminus (B\setminus A)$, we get that $A$ is in the completion of $\pi^{-1}\beta(Y)$. It follows that every open set is in the completion of $\pi^{-1}\beta(Y)$, from which we get that $\beta(X)$ is contained in the completion of $\pi^{-1}\beta(Y)$. This implies that $\mu$ can be extended in a unique way to $\beta(X)$. It is straightforward to check that $\mu$ is invariant on $\beta(X)$.      
\end{proof}

\subsection{Regular Toeplitz $G$-subshifts.} Let $\Sigma$ be a finite set with at least two elements. Let $\eta\in\Sigma^{G}$ be a Toeplitz array, and suppose that the decreasing sequence of finite index  subgroups $(\Gamma_n)_{n\in \mathbb{N}}$ is a period structure of $\eta$. The Toeplitz $G$-subshift $X=\overline{O_{\sigma}(\eta)}$ is an almost 1-1 extension of the  $G$-odometer $\overleftarrow{G}$ associated to $(\Gamma_n)_{n\in\mathbb{N}}$ (see Proposition \ref{1-1-extension}).
If $\pi:X\to \overleftarrow{G}$ is the almost 1-1 factor map, then
$$
\mathcal{T}=\{x\in X: x \mbox{ is a Toeplitz array}\}=\pi^{-1}\{y\in \overleftarrow{G}: |\pi^{-1}\{y\}|=1\}. 
$$
In other words, the set of Toeplitz arrays in $X$ is exactly the pre-image of the set of elements in $\overleftarrow{G}$ having exactly one pre-image (see Proposition \ref{1-1-extension}). The set of Toeplitz arrays is invariant under the action of $G$. Indeed, if $x\in \mathcal{T}$ and $g, h\in G$, there exists a finite index subgroup $\Gamma$ of $G$ such that  $x(g^{-1}h)=x(\gamma g^{-1}h)$, for every $\gamma\in \Gamma$. From this we get 
$$\sigma^gx(h)=x(g^{-1}h)=x(\gamma g^{-1}h)=x(g^{-1}g\gamma g^{-1}h)=\sigma^gx(g\gamma g^{-1}h) \quad \forall \gamma \in \Gamma,$$ 
which implies that $h\in \Per (\sigma^gx,g\Gamma g^{-1})$. This shows that $\sigma^gx\in \mathcal{T}.$ Since $\mathcal{T}$ is invariant, its image $\pi(\mathcal{T})$ is invariant and measurable in $\overleftarrow{G}$ (see for example \cite[Theorem 2.8]{Gl}). Thus, if $\nu$ is the unique ergodic measure of $\overleftarrow{G}$, then $\nu(\pi(\mathcal{T}))\in \{0,1\}$.

\begin{proposition}\label{regular-measure}
 The following statements are equivalent:
 \begin{enumerate}
\item $\nu(\pi(\mathcal{T}))=1$.
\item There exists an invariant probability measure $\mu$ of $(X, \sigma, G)$ such that $\mu(\mathcal{T})=1$.
\item There exists a unique invariant probability measure $\mu$ of $(X,\sigma, G)$ and  $\mu(\mathcal{T})=1$.
 \end{enumerate}
\end{proposition}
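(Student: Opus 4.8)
The plan is to establish the three equivalences through the cycle $(1)\Rightarrow(3)\Rightarrow(2)\Rightarrow(1)$, using throughout that the odometer $\overleftarrow{G}$ is uniquely ergodic, so that $\nu$ is its only invariant measure. First I would record the elementary identity $\pi(\mathcal{T})=\{y\in\overleftarrow{G}:|\pi^{-1}\{y\}|=1\}=:E$, which holds because $\mathcal{T}=\pi^{-1}(E)$ and $\pi$ is surjective. Consequently the statement $\nu(\pi(\mathcal{T}))=\nu(E)=1$ is equivalent to $\nu(\{y:|\pi^{-1}\{y\}|>1\})=0$, which is precisely the hypothesis of Lemma \ref{lem:measurelifting}.

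For $(1)\Rightarrow(3)$, assuming $\nu(E)=1$, Lemma \ref{lem:measurelifting} furnishes an invariant probability measure $\mu$ on $\beta(X)$ extending the pullback $\mu(\pi^{-1}(A))=\nu(A)$. Then $\mu(\mathcal{T})=\mu(\pi^{-1}(E))=\nu(E)=1$, giving the first half of (3). For the uniqueness half, let $\omega$ be any invariant probability measure of $(X,\sigma,G)$. By the Remark preceding Lemma \ref{lem:measurelifting}, the pushforwards of both $\mu$ and $\omega$ to $\overleftarrow{G}$ equal $\nu$, so $\mu$ and $\omega$ coincide on $\pi^{-1}\beta(\overleftarrow{G})$. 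The idea is to transfer this agreement to all of $\beta(X)$. Writing $N=\pi^{-1}(\{y:|\pi^{-1}\{y\}|>1\})\in\pi^{-1}\beta(\overleftarrow{G})$, we get $\mu(N)=\omega(N)=\nu(\{y:|\pi^{-1}\{y\}|>1\})=0$. For a closed set $A\subseteq X$, the set $\pi(A)$ is compact hence Borel, so $B=\pi^{-1}(\pi(A))$ lies in $\pi^{-1}\beta(\overleftarrow{G})$, satisfies $A\subseteq B$, and $B\setminus A\subseteq N$ exactly as in the proof of Lemma \ref{lem:measurelifting}. Therefore both $\mu$ and $\omega$ assign to $A$ the common value $\nu(\pi(A))$. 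Since the closed subsets of $X$ form a $\pi$-system generating $\beta(X)$ and $\mu,\omega$ are probability measures, a standard Dynkin argument forces $\mu=\omega$.

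The implication $(3)\Rightarrow(2)$ is immediate, as (3) asserts the existence of the required measure. For $(2)\Rightarrow(1)$, given an invariant $\mu$ with $\mu(\mathcal{T})=1$, I would push it forward: by the Remark preceding Lemma \ref{lem:measurelifting}, the measure $\nu^*$ defined by $\nu^*(A)=\mu(\pi^{-1}(A))$ is an invariant probability measure on $\overleftarrow{G}$, hence $\nu^*=\nu$ by unique ergodicity. Thus $\nu(\pi(\mathcal{T}))=\nu(E)=\nu^*(E)=\mu(\pi^{-1}(E))=\mu(\mathcal{T})=1$.

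The main obstacle is the uniqueness clause inside $(1)\Rightarrow(3)$: the mere fact that two invariant measures agree on the factor sub-$\sigma$-algebra $\pi^{-1}\beta(\overleftarrow{G})$ does not by itself pin them down on $\beta(X)$. What makes the argument work is that the fibre-nontriviality set $N$ is null \emph{simultaneously} for every invariant measure (because all of them push forward to the same $\nu$), so an arbitrary Borel set can be approximated from the factor algebra up to a universally null set. Once this observation is in place, the generating $\pi$-system argument is routine.
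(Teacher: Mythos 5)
Your proposal is correct and follows essentially the same route as the paper: the substance is the implication $(1)\Rightarrow(3)$ via Lemma \ref{lem:measurelifting} together with the observation that all invariant measures of $X$ push forward to $\nu$ and hence agree on $\pi^{-1}\beta(\overleftarrow{G})$, while $(3)\Rightarrow(2)\Rightarrow(1)$ is the easy pushforward direction. Your explicit Dynkin-system argument for uniqueness merely unpacks the closed-set approximation already contained in the proof of Lemma \ref{lem:measurelifting}, so nothing is genuinely different.
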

\begin{proof}
If $\nu(\{y\in \overleftarrow{G}: |\pi^{-1}\{y\}|=1\})=1$ then the measure $\mu$ defined on $\pi^{-1}\beta(\overleftarrow{G})$, extends to a unique invariant probability measure on $\beta(X)$ (see Lemma \ref{lem:measurelifting}). Moreover, $\mu(\mathcal{T})=1$.  From this we get (1) implies (3). The rest of the proof is obvious.
\end{proof}
 
\begin{definition}
We say that the Toeplitz element $\eta\in \Sigma^G$ or the Toeplitz $G$-subshift $(X,\sigma,G)$ is {\it regular}, if any of the statements of Proposition \ref{regular-measure} is satisfied. 
\end{definition}
\begin{remark}
{\rm In \cite{LaSt18}, the notion of regularity over Toeplitz subshifts is defined for the case where the acting group is amenable. Using the Lemma below, it can be shown that the definition presented here coincides with that used in \cite{LaSt18} when $G$ is amenable. Note that, by definition, regular Toeplitz $G$-subshifts are uniquely ergodic.}   
\end{remark}

\medskip

Let $(D_i)_{i\in\N}$ be a sequence of fundamental domains of $G$   as in Lemma \ref{decom} (in the language of Lemma \ref{decom}, after taking subsequences, we can assume that $n_i=i$). For each $i\in \N$, let define
$$d_i=\frac{|D_i\cap \Per(\eta,\Gamma_i)|}{|D_i|}.$$
By Lemma \ref{decom}, we have that
\begin{align*}
    D_{i+1}\cap\Per(\eta,\Gamma_i)&=\left[\bigcup_{\gamma\in D_{i+1}\cap\Gamma_{i}}\gamma D_i\right]\cap \Per(\eta,\Gamma_i)\\
    &=\bigcup_{\gamma\in D_{i+1}\cap\Gamma_i}\gamma (D_i\cap\gamma^{-1}\Per(\eta,\Gamma_i)).
\end{align*}
Since $\gamma'\Per(\eta,\Gamma_i)=\Per(\eta,\Gamma_i)$ for every $\gamma'\in\Gamma_i$, we obtain that

\begin{align*}
    |D_{i+1}\cap \Per(\eta,\Gamma_i)|=|D_{i+1}\cap \Gamma_i||D_i\cap \Per(\eta,\Gamma_i)|.
\end{align*}

Since $\Per(\eta,\Gamma_i)\subseteq \Per(\eta, \Gamma_{i+1})$, 
\begin{align*}
    \frac{|D_{i+1}\cap\Per(\eta,\Gamma_{i+1})|}{|D_{i+1}|}\geq \frac{|D_{i+1}\cap\Per(\eta,\Gamma_{i})|}{|D_{i+1}|}.
\end{align*}

Now, using $|D_{i+1}\cap \Gamma_i||D_i|=|D_{i+1}|$ we conclude that

$$
d_{i+1}=\frac{|D_{i+1}\cap \Per(\eta,\Gamma_{i+1})|}{|D_{i+1}|} \geq \frac{|D_{i+1}\cap \Gamma_i||D_i\cap\Per(\eta, \Gamma_i)|}{|D_{i+1}|}=\frac{|D_i\cap\Per(\eta,D_i)|}{|D_i|}=d_i.
$$
This implies that the sequence $(d_i)_{i\in\mathbb{N}}$ is increasing, therefore, there exists $d\in [0,1]$ such that $\lim_{i\to\infty}d_i=d$.   

 \begin{lemma}\label{Periods-Toeplitz}
    For every Toeplitz array $x\in X$, we have $G=\bigcup_{n\in\mathbb{N}}\Per(x,\Gamma_n)$.
\end{lemma}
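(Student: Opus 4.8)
The plan is to transfer the problem from the subshift $X$ to the fibers of the factor map $\pi\colon X\to\overleftarrow{G}$ and exploit that $x$ being a Toeplitz array forces its fiber to be a singleton. Concretely, fix a Toeplitz array $x\in X$. By Proposition \ref{1-1-extension} we have $\pi^{-1}\{\pi(x)\}=\{x\}$. Writing $y=\pi(x)=(g_n\Gamma_n)_{n\in\mathbb{N}}$ and setting $Y_n=\sigma^{g_n}C_n$, the description of the clopen partition (equivalence of $\sigma^gC_n=\sigma^hC_n$ with $g\Gamma_n=h\Gamma_n$) shows that $z\in\pi^{-1}\{y\}$ if and only if $z\in Y_n$ for every $n$; hence $\bigcap_{n}Y_n=\{x\}$. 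Moreover, using $C_{n+1}=X_{n+1}\subseteq X_n=C_n$ together with the compatibility $g_{n+1}\Gamma_n=g_n\Gamma_n$, one checks that the closed sets $Y_n$ are nested: $Y_{n+1}=\sigma^{g_{n+1}}C_{n+1}\subseteq\sigma^{g_{n+1}}C_n=\sigma^{g_n}C_n=Y_n$. This decreasing family of clopen fibers, collapsing to the single point $x$, is the backbone of the argument.

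The core step is an argument by contradiction. Fix $g\in G$ and suppose that $g\notin\Per(x,\Gamma_n)$ for every $n$. Then, for each $n$, the array $x$ is not constant on the coset $\Gamma_n g$, so there is $\gamma_n\in\Gamma_n$ with $x(\gamma_n g)\neq x(g)$. The key observation is that each $Y_n$ is invariant under the action of $\Gamma_n$: since $\Gamma_n$ is normal, $\sigma^\gamma Y_n=\sigma^{\gamma g_n}C_n=\sigma^{g_n}C_n=Y_n$ for $\gamma\in\Gamma_n$, because $g_n^{-1}\gamma g_n\in\Gamma_n$. Consequently $z_n:=\sigma^{\gamma_n^{-1}}x$ again lies in $Y_n$, while $z_n(g)=x(\gamma_n g)\neq x(g)$. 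Thus every fiber $Y_n$ contains a point disagreeing with $x$ at the coordinate $g$.

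Finally I would pass to a limit. As $\Sigma$ is finite, some value $\beta\neq x(g)$ satisfies $z_n(g)=\beta$ for infinitely many $n$; by compactness of $X$ a subsequence $z_{n_j}$ converges to some $x'\in X$ with $x'(g)=\beta$. Since the $Y_n$ are nested and closed, for each fixed $m$ we have $z_{n_j}\in Y_{n_j}\subseteq Y_m$ for all large $j$, so $x'\in\bigcap_m Y_m=\{x\}$, forcing $x'=x$ and contradicting $x'(g)=\beta\neq x(g)$. Hence $g\in\Per(x,\Gamma_n)$ for some $n$, and since $g$ was arbitrary, $G=\bigcup_{n}\Per(x,\Gamma_n)$. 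The one point deserving care — the main obstacle — is the $\Gamma_n$-invariance of the fibers $Y_n$, which is what lets a single translation move $x$ to a new point of the \emph{same} fiber differing at $g$; this uses that the period structure $(\Gamma_n)_{n\in\mathbb{N}}$ consists of normal subgroups (as in our compactification setting), and in the general case one replaces $\Gamma_n$ by its conjugate $g_n\Gamma_n g_n^{-1}$, which acts minimally on $Y_n$ and yields the same conclusion.
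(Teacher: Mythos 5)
Your argument is correct in the setting where the $\Gamma_n$ are normal, and it takes a genuinely different route from the paper. The paper's proof is more abstract: it takes a period structure $(H_n)_{n\in\mathbb{N}}$ of $x$ itself (so that $G=\bigcup_n\Per(x,H_n)$ holds by definition of a Toeplitz array), observes that the odometer $\overleftarrow{H}$ associated to $(H_n)_n$ and $\overleftarrow{G}$ are both the maximal equicontinuous factor of $X$ and hence conjugate, and then invokes \cite[Lemma 2]{MICPET08} to interleave the two filtrations, producing for each $n$ some $k_n$ with $\Gamma_{k_n}\subseteq H_n$ and therefore $\Per(x,H_n)\subseteq\Per(x,\Gamma_{k_n})$. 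You instead work entirely inside $X$: the nested clopen fibers $Y_n=\sigma^{g_n}C_n$ collapsing to the singleton $\{x\}$, the $\Gamma_n$-invariance of each $Y_n$, and a compactness argument replace the appeal to the maximal equicontinuous factor, the conjugacy of the two odometers, and the external lemma. All the individual steps check out: $\pi^{-1}\{\pi(x)\}=\bigcap_nY_n=\{x\}$, the nesting $Y_{n+1}\subseteq Y_n$ via $g_{n+1}\Gamma_n=g_n\Gamma_n$ and $C_{n+1}\subseteq C_n$, and the fact that $z_n=\sigma^{\gamma_n^{-1}}x$ stays in $Y_n$ while disagreeing with $x$ at $g$. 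Your version is more self-contained and transparent; the paper's version has the advantage of requiring no normality hypothesis.

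On that last point, your closing remark about the non-normal case does not quite go through as stated. Replacing $\Gamma_n$ by $g_n\Gamma_ng_n^{-1}$ does make $Y_n$ invariant, but then the translations $\sigma^{\gamma^{-1}}$ with $\gamma\in g_n\Gamma_ng_n^{-1}$ detect non-constancy of $x$ on the set $g_n\Gamma_ng_n^{-1}g$ rather than on $\Gamma_ng$, so the contradiction you reach only yields $G=\bigcup_n\Per(x,g_n\Gamma_ng_n^{-1})$, which is not the asserted conclusion. Since the lemma is only applied in the paper under the hypothesis that the $\Gamma_n$ are normal (Lemma \ref{lem:regularmeasure}), this is a restriction of scope rather than a fatal flaw, but you should either state the normality hypothesis explicitly or fall back on the paper's filtration-comparison argument for the general case.
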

\begin{proof}
    Let $x\in X$ be a Toeplitz array and let $\{H_n\}_{n\in\mathbb{N}}$ be  a period structure of $x$. Since $X$ is minimal, we have $X=\overline{\mathcal{O}_\sigma(x)}$. Thus, Proposition \ref{1-1-extension}  implies that there exists an almost 1-1 factor map $\rho\colon X\to \overleftarrow{H}$, where $\overleftarrow{H}=\varprojlim(G/H_n,p_n)$, and $p_n\colon G/H_{n+1}\to G/H_n$ is the canonical projection for every $n\in \mathbb{N}$.  By \cite[Proposition 5.5]{Krieger}, we get that $\overleftarrow{H}$ is the maximal equicontinuous factor of $X$ and consequently, we obtain that $\overleftarrow{G}$ and $\overleftarrow{H}$ are conjugate with conjugacy $\overline{\rho}\colon \overleftarrow{G}\to\overleftarrow{H}$.  Let $\rho\colon \overleftarrow{G}\to\overleftarrow{H}$ be defined by $\rho(\overline{g})=\overline{\rho}(\overline{g})\overline{\rho}(e_{\overleftarrow{G}})$, where $\overline{g}\in\overleftarrow{G}$, $e_{\overleftarrow{G}}$ and $ e_{\overleftarrow{H}}$ denote the class of the neutral element $1_G$ in $\overleftarrow{G}$ and in $\overleftarrow{H}$, respectively.  Since $\overline{\rho}$ is a conjugacy, it implies that $\rho$ is so as well. Moreover, $\rho(e_{\overleftarrow{G}})=e_{\overleftarrow{H}}$. Applying \cite[Lemma 2]{MICPET08}, we get that for every $n\in\mathbb{N}$, there exists $k_n\in\mathbb{N}$ such that $\Gamma_{k_n}\subseteq H_{n}$ and consequently, $\Per(x,\Gamma_{n_k})\supseteq \Per(x,H_{n})$. Hence,
    \begin{align*}
        G=\bigcup_{n\in\mathbb{N}}\Per(x, H_n)\subseteq \bigcup_{k\in\mathbb{N}}\Per(x, \Gamma_{n_k})\subseteq \bigcup_{n\in\mathbb{N}}\Per(x, \Gamma_n).
    \end{align*}
    This completes the proof.
\end{proof}

\begin{lemma}\label{lem:regularmeasure}
 Let   $\eta\in \Sigma^G$ be a Toeplitz array and let $(\Gamma_n)_{n\in\mathbb{N}}$ be a period structure of $\eta$ such that every $\Gamma_n$ is normal in $G$.  Then   $\eta$ is regular if and only if $d=1$.
\end{lemma}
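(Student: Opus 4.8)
The plan is to give an explicit pointwise description of the invariant measurable set $\pi(\mathcal{T}) = \{y \in \overleftarrow{G} : |\pi^{-1}\{y\}| = 1\}$ in terms of the periodic positions of $\eta$, and then to compute its $\nu$-measure one coordinate at a time. First I would fix $y = (g_n\Gamma_n)_{n\in\N} \in \overleftarrow{G}$ and analyze an arbitrary $x \in \pi^{-1}\{y\}$. Since $x \in \sigma^{g_n} C_n$ for every $n$, the definition of $C_n$ gives $\Per(\sigma^{g_n^{-1}}x, \Gamma_n, \alpha) = \Per(\eta, \Gamma_n, \alpha)$; using normality of $\Gamma_n$ together with the identity $g\Per(\eta, g^{-1}\Gamma g, \alpha) = \Per(\sigma^g\eta, \Gamma, \alpha)$ recorded earlier, this rewrites as $\Per(x, \Gamma_n) = g_n\Per(\eta, \Gamma_n)$ for every $n$. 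Combining this with Lemma \ref{Periods-Toeplitz} (a point of $X$ is Toeplitz exactly when $G = \bigcup_n \Per(x, \Gamma_n)$) and with Proposition \ref{1-1-extension} (which identifies $\mathcal{T}$ with the singleton fibres), I obtain the clean description
\[
\pi(\mathcal{T}) = \Big\{ y = (g_n\Gamma_n)_{n\in\N} : \bigcup_{n\in\N} g_n \Per(\eta, \Gamma_n) = G \Big\}.
\]

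Second, I would reduce the global condition $\bigcup_n g_n\Per(\eta,\Gamma_n) = G$ to a condition on a single coordinate. For $g \in G$ set $E_g = \{y : g \in \bigcup_n g_n\Per(\eta,\Gamma_n)\}$, so that $\pi(\mathcal{T}) = \bigcap_{g\in G} E_g$. Writing $E = E_{1_G}$ and $B_n = \{y : g_n^{-1} \in \Per(\eta,\Gamma_n)\}$, one checks $E = \bigcup_n B_n$ and that the $B_n$ are increasing: if $g_n^{-1} \in \Per(\eta,\Gamma_n)$ and $g_{n+1} = g_n\gamma$ with $\gamma \in \Gamma_n$, then $\Gamma_{n+1}g_{n+1}^{-1} \subseteq \Gamma_n g_n^{-1}$, so $\eta$ remains constant along $\Gamma_{n+1}g_{n+1}^{-1}$ and $g_{n+1}^{-1}\in\Per(\eta,\Gamma_{n+1})$. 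The Haar measure $\nu$ pushes to the uniform measure on each finite quotient $G/\Gamma_n$, and since $\Gamma_n$ is normal, inversion is a bijection of $G/\Gamma_n$ carrying the $d_n|D_n|$ periodic cosets onto the cosets counted by $B_n$; hence $\nu(B_n) = d_n$ and $\nu(E) = \lim_n d_n = d$.

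Finally, equivariance of $\pi$ yields $E_g = \phi^g E$ for every $g$, so $\nu(E_g) = \nu(E) = d$ for all $g$ by invariance of $\nu$. If $d = 1$, each $E_g$ has full measure and, $G$ being countable, $\nu(\pi(\mathcal{T})) = \nu(\bigcap_g E_g) = 1$, so $\eta$ is regular. If $d < 1$, the inclusion $\pi(\mathcal{T}) \subseteq E$ forces $\nu(\pi(\mathcal{T})) \le d < 1$, and since $\pi(\mathcal{T})$ is invariant and measurable with $\nu$ ergodic (already noted above), $\nu(\pi(\mathcal{T})) = 0$, so $\eta$ is not regular. I expect the most delicate point to be the package consisting of the identity $\Per(x,\Gamma_n) = g_n\Per(\eta,\Gamma_n)$, the monotonicity of the $B_n$, and the exact count $\nu(B_n) = d_n$: all three depend crucially on the normality of $\Gamma_n$, which guarantees that $\Per(\eta,\Gamma_n)$ is a union of two-sided cosets and that inversion descends to the quotient group, and this is precisely the standing hypothesis of the lemma.
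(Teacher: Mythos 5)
Your proof is correct and follows essentially the same route as the paper's: your set $E$ is exactly $\pi(H)$ for the paper's set $H=\bigcup_n\bigcup_{h\in\Per(\eta,\Gamma_n)}\sigma^{h^{-1}}C_n$, your identity $\pi(\mathcal{T})=\bigcap_{g\in G}\phi^g E$ matches the paper's $\mathcal{T}=\bigcap_{g\in G}\sigma^g H$, and both arguments conclude via the count $\nu(B_n)=d_n$, a countable intersection of full-measure sets when $d=1$, and invariance plus ergodicity of $\nu$ when $d<1$. The only difference is presentational: you work downstairs in $\overleftarrow{G}$ with an explicit coordinate description of the singleton fibres, while the paper works upstairs in $X$ and then pushes forward.
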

\begin{proof}
For every $n\in \mathbb{N}$,  let $C_n$ be as defined in Section \ref{subsec:deftoeplitzodometers}. We define
\begin{align*}
    H_n=\bigcup_{h\in \Per(\eta,\Gamma_n)}\sigma^{h^{-1}}C_n  \hspace{3mm} \mbox{ and } \hspace{3mm} H=\bigcup_{n\in\mathbb{N}}H_n.
\end{align*}
If $x\in H$, then there exist $n\in \mathbb{N}$ and $h\in \Per(\eta,\Gamma_n)$ such that $x\in \sigma^{h^{-1}}C_n$. This implies that $\Per(\sigma^{h} x,\Gamma_n,\alpha)=\Per(\eta,\Gamma_n,\alpha)$, for every $\alpha\in \Sigma$. Moreover, since $h\in \Per(\eta,\Gamma_n)$, we have $h\in \Per(\sigma^{h}x, \Gamma_n)$, which means that  $x(1_G)=x(h^{-1}\gamma h)$, for every $\gamma\in \Gamma_n$, (recall that the $\Gamma_n$'s are normal). Therefore, $1_G$  is periodic in $x$ with respect to a finite index subgroup of $G$.
 We have shown that $H_n\subseteq\{x\in X\mid 1_G\in \Per(x,\Gamma_n)\}$.  
 
 Now, let $x\in X$ such that $1_G\in\Per(x,\Gamma_n)$. Since $\{\sigma^{h^{-1}}C_n:h\in D_n\}$ is a partition of $X$, there exists $h\in D_n$ such that $x\in \sigma^{h^{-1}}C_n$. It follows that $\Per(x,\Gamma_n)=\Per(\sigma^{h^{-1}}\eta,\Gamma_n)$. Since $1_G\in \Per(x,\Gamma_n)=\Per(\sigma^{h^{-1}}\eta,\Gamma_n)$, we can deduce that $h\in \Per(\eta,\Gamma_n)$ and $x\in H_n$. Therefore, we have $H_n=\{x\in X\mid 1_G\in\Per(x,\Gamma_n)\}$.

 \vspace{1em}From above we deduce that  $\bigcap_{g\in G}\sigma^gH \subseteq \mathcal{T}$. Next, let $x\in \mathcal{T}$.  
 From Lemma \ref{Periods-Toeplitz}, we get that $G=\bigcup_{n\in\mathbb{N}}\Per(x,\Gamma_n)$. For each $g\in G$, we have that $\sigma^{g^{-1}}x$ is also a Toeplitz array. Thus, $1_G\in\Per(\sigma^{g^{-1}}x,\Gamma_i)$ for some $i\geq 1$, which implies $\sigma^{g^{-1}} x\in H$. Hence, $x\in \sigma^{g} H$ for every $g\in G$. Consequently, $x\in\bigcap_{g\in G}\sigma^g H$. We have shown $\mathcal{T}\subseteq \bigcap_{g\in G}\sigma^g H$ and with this, $\mathcal{T}=\bigcap_{g\in G}\sigma^g H$.
\vspace{1em}

Suppose $d=1$. Since $C_n=\pi^{-1}\{(y_j)_{j\in \mathbb{N}}\in \overleftarrow{G}: y_n=\Gamma_n\}$, we have $$H_n=\pi^{-1}\left(\bigcup_{h\in \Per(\eta,\Gamma_n)}\{(y_j)_{j\in \mathbb{N}}\in \overleftarrow{G}: y_n=h^{-1}\Gamma_n\}\right)$$
and
$$
H=\pi^{-1}\left( \bigcup_{n\in\mathbb{N}} \bigcup_{h\in \Per(\eta,\Gamma_n)}\{(y_j)_{j\in \mathbb{N}}\in \overleftarrow{G}: y_n=h^{-1}\Gamma_n\}   \right)
$$
This implies that $\nu(\pi(H_n))=\dfrac{|D_n\cap \Per(\eta,\Gamma_n)|}{|D_n|}=d_n$ and $\nu(\pi(H))= d=1$. On the other hand,
$$
\pi\left(\bigcap_{g\in G}\sigma^gH\right)=\bigcap_{g\in G}\phi^g\pi(H)\subseteq \pi(\mathcal {T} ). 
$$
Since $\nu$ is invariant, we deduce that $\nu(\pi(\mathcal{T}))=1$.

Conversely, suppose that $d<1$.   Since $\nu(\pi(\mathcal{T}))\leq \nu(\pi(H))=d<1$ and $\pi(\mathcal{T})$ is $G$-invariant, we obtain $\nu(\pi(\mathcal{T}))=0$. Thus, we conclude $\eta$ is not regular.
\end{proof}

\begin{proposition}\label{prop:regularexistence}
Let $G$ be a residually finite group. Let $(\Gamma_n)_{n\in \mathbb{N}}$ be a decreasing sequence of finite index normal subgroups of $G$ with trivial intersection. Let $\overleftarrow{G}$ be the $G$-odometer associated to the sequence $(\Gamma_n)_{n\in\mathbb{N} }$. Then, there exists a free regular Toeplitz $G$-subshift $X\subseteq \{0,1\}^G$  which is an almost 1-1 extension of $\overleftarrow{G}$.
\end{proposition}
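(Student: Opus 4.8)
The plan is to build by hand a single Toeplitz array $\eta\in\{0,1\}^G$ that is \emph{maximally} periodic, in the sense that at every level only one coset of $\Gamma_n$ fails to be periodic; regularity (i.e. $d=1$) and the identification of the maximal equicontinuous factor will then be read off from Lemma~\ref{lem:regularmeasure} and Proposition~\ref{1-1-extension}. After replacing $(\Gamma_n)_{n\in\mathbb N}$ by a strictly decreasing subsequence (which does not change the associated odometer up to conjugacy), I apply Lemma~\ref{decom} to fix fundamental domains $1_G\in D_1\subseteq D_2\subseteq\cdots$ with $D_i$ a transversal of $\Gamma_i$, $G=\bigcup_i D_i$, and the nesting $D_j=\bigsqcup_{v\in D_j\cap\Gamma_i}vD_i$ for $j>i$. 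Since the $\Gamma_i$ are strictly decreasing of finite index, $|D_i|=[G:\Gamma_i]\to\infty$.

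I then define $\eta$ together with a nested family of ``hole'' cosets $c_i\Gamma_i$ by induction. At stage $1$ I choose a coset $c_1\Gamma_1$ and set $\eta$ equal to a constant on each of the remaining $\Gamma_1$-cosets. At stage $i+1$ I refine the current hole $c_i\Gamma_i$ into its $[\Gamma_i:\Gamma_{i+1}]\ge 2$ subcosets, keep exactly one of them as the new hole $c_{i+1}\Gamma_{i+1}$, and set $\eta$ constant on each of the others (never altering values fixed at earlier stages). Fixing an enumeration $G=\{g_1,g_2,\dots\}$, whenever $g_i$ lies in the current hole I pick the surviving subcoset so as to exclude $g_i$; and I choose the constant values so that both symbols eventually occur inside every hole coset $c_i\Gamma_i$. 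Because after stage $i$ the array $\eta$ is defined on all of $G\setminus c_i\Gamma_i$, and no $g\in G$ remains in the hole forever, $\eta$ is a well-defined element of $\{0,1\}^G$.

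For the verification: since each $g_i$ is filled by stage $i+1$, every element of $G$ is eventually periodic and $\eta$ is a Toeplitz array, while forcing both symbols into $c_i\Gamma_i$ gives $\Per(\eta,\Gamma_i)=G\setminus c_i\Gamma_i$, so $\eta$ is genuinely non-periodic and $(\Gamma_i)_i$ is a period structure of $\eta$. Each $\Gamma_i$ is automatically an essential period: for normal $\Gamma_i$ the defining condition forces left translation by $g$ to permute the $\Gamma_i$-cosets while preserving $\eta$, hence to fix the unique non-periodic coset $c_i\Gamma_i$; as the regular action of $G/\Gamma_i$ on itself is free, this yields $g\in\Gamma_i$. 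Because the $\Gamma_i$ are normal with $\bigcap_i\Gamma_i=\{1_G\}$, every stabilizer in $X=\overline{O_\sigma(\eta)}$ equals $\bigcap_i\Gamma_i=\{1_G\}$, so $X$ is free and Proposition~\ref{1-1-extension} shows that $\overleftarrow G$ is its maximal equicontinuous factor through an almost $1$-$1$ map. Finally the single hole gives $d_i=(|D_i|-1)/|D_i|\to1$, whence $d=1$ and $\eta$ is regular by Lemma~\ref{lem:regularmeasure}.

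The main obstacle, and the point where non-amenability is felt, is to force the density of periodic positions to $1$ while keeping $\eta$ aperiodic and genuinely Toeplitz. In the amenable case this density is controlled through F\o lner averages, which are unavailable here; instead the argument must rest on the exact tiling identity $|D_{i+1}\cap\Gamma_i|\,|D_i|=|D_{i+1}|$ furnished by Lemma~\ref{decom}, which is precisely what allows a single ``moving hole'' to vanish in $D_i$-density yet escape to a point of $\overleftarrow G\setminus G$, thereby reconciling $d=1$ with non-periodicity. I expect the only genuinely technical bookkeeping to be checking that the surviving hole subcoset can always be redirected away from $g_i$ while still placing both symbols inside each hole coset $c_i\Gamma_i$.
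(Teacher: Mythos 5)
Your proof is correct, and it follows the same overall strategy as the paper---construct an explicit Toeplitz array over the given chain, verify that $(\Gamma_n)_{n\in\mathbb{N}}$ is a period structure, and conclude via the density criterion of Lemma \ref{lem:regularmeasure} together with Proposition \ref{1-1-extension}---but the array itself is genuinely different. The paper reuses the construction of \cite[Theorem 5]{MICPET08}, in which the undetermined region at level $2n$ is (roughly) a whole $\Gamma_{2n-1}$-coset intersected with $D_{2n}$; the resulting density $|D_{2n}\cap\Per(\eta,\Gamma_{2n})|/|D_{2n}|$ is only $1-\tfrac{1}{|D_{2n-1}|}+\tfrac{|D_{2n-1}|}{|D_{2n}|}$, so one must pass to a subsequence with $|D_{i-1}|/|D_i|$ small to force $d=1$. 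Your single-coset ``moving hole'' makes the density exactly $(|D_i|-1)/|D_i|$, so $d=1$ comes for free with no growth hypothesis on the $D_i$; the price is that you must check by hand that each $\Gamma_i$ is an essential period, which the paper inherits from the cited construction. Your essentiality argument is sound: the hypothesis, after taking unions over symbols, forces $g$ to map the unique non-periodic coset $c_i\Gamma_i$ into itself, whence $g\in c_i\Gamma_i c_i^{-1}=\Gamma_i$ by normality---and this is exactly where you need both symbols to occur inside every $c_i\Gamma_i$, since that is what guarantees $\Per(\eta,\Gamma_i)=G\setminus c_i\Gamma_i$ rather than all of $G$. The two pieces of bookkeeping you defer are indeed harmless: the new hole can always be steered away from $g_i$ because $[\Gamma_i:\Gamma_{i+1}]\geq 2$, and alternating the symbol assigned at successive stages places both symbols in every hole coset even when only one subcoset is filled per stage.
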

\begin{proof} 
We will show that the example given in \cite[Theorem 5]{MICPET08} is regular.
Let $(\Gamma_n)_{n\in \mathbb{N}}$ be the sequence given in the statement.
Consider $(D_n)_{n\in \mathbb{N}}$ as in \cref{decom}. Taking subsequences if necessary, we can assume that $D_i$ is a fundamental domain of $G/\Gamma_i$ and $\frac{|D_{i-1}|}{|D_i|}<\frac{1}{i}$, for every $i\in \mathbb{N}$. 
Define the sequence $(S_n)_{n\geq 0}$ of subsets of $G$ inductively as follows: let $S_0:=\{1_G\}$. 
Consider $v_1\in D_1\setminus \{1_G\}$ and let $S_1:=\{v_1\}$. 
For $n>1$, suppose we have defined $S_{n-1}$ and $v_{n-1}\in S_{n-1}$. Let $S_n=(v_{n-1}\Gamma_{n-1}\cap D_n)\setminus D_{n-1}$ and let $v_n\in S_n$. The sequence $\eta\in \{0,1\}^G$ is defined as follows:
\begin{align*}
    \eta(w)=\begin{cases}
        0,&\mbox{ if }w\in\bigcup_{n\geq 0}S_{2n}\Gamma_{2n+1},\\
        1,&\mbox{ else.}
    \end{cases}
\end{align*}
The subshift  $X=\overline{O_\sigma(\eta)}$ is a Toeplitz $G$-subshift which is an almost 1-1 extension of $\overleftarrow{G}$. Moreover,  $(\Gamma_n)_{n\in \mathbb{N}}$ is a periodic structure of $\eta$ (see \cite{MICPET08}).

It remains to prove that $\eta$ is a regular Toeplitz array. Let $n\in \mathbb{N}$ and $g\in D_{2n}\setminus S_{2n}$. We have
$g\Gamma_{2n}\cap v\Gamma_{2n}=\emptyset$, for every $v\in S_{2n}.$
Since $v\Gamma_{2n+1}\subseteq v\Gamma_{2n}$, then 
$$
g\Gamma_{2n}\cap v\Gamma_{2n+1}=\emptyset \mbox{ for every } v\in S_{2n}.
$$
For $m>n$, observe that $S_{2m}\subseteq v_{2n}\Gamma_{2n}\dots \Gamma_{2m-1}\subseteq v_{2n}\Gamma_{2n}.$  Thus
$$
S_{2m}\Gamma_{2m+1}\subseteq v_{2n}\Gamma_{2n}\Gamma_{2m+1}\subseteq v_{2n}\Gamma_{2n}.
$$
Since $g\neq v_{2n}\in D_{2n}$, then $g\Gamma_{2n}\cap v_{2n}\Gamma_{2n}=\emptyset.$ This implies 
$$g\Gamma_{2n}\cap S_{2m}\Gamma_{2m+1}=\emptyset \mbox{ for every } m\geq n.$$
If $g\Gamma_{2n}\cap S_{2m}\Gamma_{2m+1}=\emptyset$ for every $m<n$, then $g\Gamma_{2n}\cap \bigcup_{m\geq 0}S_{2m}\Gamma_{2m+1}=\emptyset$. This implies that $\eta(g\gamma)= 1$, for every $\gamma\in\Gamma_{2n}$ and then $g\in \Per(\eta, \Gamma_{2n}).$ If there exists $m<n$ such that $g\Gamma_{2n}\cap S_{2m}\Gamma_{2m+1}\neq\emptyset$, then since $\Gamma_{2n}\subseteq \Gamma_{2m+1}$, we have that $g\in \Per(\eta, \Gamma_{2n},0)$. This implies that $D_{2n}\setminus S_{2n}\subseteq \Per(\eta, \Gamma_{2n})\cap D_{2n}.$

Thus,  
\begin{align*}
    \lim_{n\to\infty}\dfrac{|D_{2n}\setminus S_{2n}|}{|D_{2n}|}\leq\lim_{n\to\infty}\dfrac{|D_{2n}\cap \Per(\eta,\Gamma_{2n})|}{|D_{2n}|}. 
\end{align*}
Note that $$|v_{2n-1}\Gamma_{2n-1}\cap D_{2n}|=|\Gamma_{2n-1}\cap D_{2n}|=\frac{|D_{2n}|}{|D_{2n-1}|}.$$
Note also that $S_{2n}=(v_{2n-1}\Gamma_{2n-1}\cap D_{2n})\setminus \{v_{2n-1}\}$. Indeed, $v_{2n-1}\Gamma_{2n-1}\cap D_{2n}\cap D_{2n-1}= v_{2n-1}\Gamma_{2n-1}\cap D_{2n-1}$, and since $D_{2n-1}$ is a fundamental domain of $\Gamma_{2n-1}$, the only element in this intersection is $v_{2n-1}$. 

We conclude that
$$
|D_{2n}\setminus S_{2n}|=|D_{2n}|-|S_{2n}|=|D_{2n}|-\left(\frac{|D_{2n}|}{|D_{2n-1}|}-1\right).
$$

Therefore,
\begin{align*}
  \lim_{n\to\infty}\dfrac{|D_{2n}\setminus S_{2n}|}{|D_{2n}|}=1-\lim\limits_{n\to\infty}\dfrac{1}{|D_{2n-1}|}+\lim\limits_{n\to\infty}\dfrac{1}{|D_{2n}|} =1.
\end{align*}
Consequently, $\lim\limits_{n\to\infty}\dfrac{|D_n\cap \Per(\eta,\Gamma_n)|}{|D_n|}=1$, i.e., $\eta$ is a regular Toeplitz array.

\end{proof}

\begin{proof}[Proof of Theorem \ref{theo:main1}]
This is direct from Propositions \ref{regular-measure}, \ref{prop:regularexistence}, \ref{1-1-extension}  and Lemma \ref{odometers-compactification}.
\end{proof}

\section{$G$-subshifts with at least $r$ ergodic measures}\label{sec:finitely-many}

Let $r>1$ be an integer. In this section we  prove that for any residually finite group $G$, there exists a  Toeplitz $G$-subshift having at least $r$ ergodic measures. In the case where $G$ is also amenable, the Toeplitz $G$-subshift has exactly $r$ ergodic measures and we recover a result in \cite{CP14} about the realization of Choquet simplices as sets of invariant measures of subshifts of residually finite amenable groups, for the case of finite dimensional Choquet simplices (see Remark \ref{remark:amenable-2}). 

Through this section, $G$ will be a  residually finite group and $(\Gamma_i)_{i\geq 0}$ a  strictly decreasing sequence  of finite index normal subgroups of $G$ with trivial intersection. Fixing $\Gamma_0=G$ and  taking subsequences if necessary, by Lemma \ref{decom}, we can assume that $(D_i)_{i\geq 0}$ is a sequence such that $D_i$ is a fundamental domain of $G/\Gamma_i$ verifying the following conditions:
\begin{enumerate}
    \item $\{1_G\}\subseteq D_i\subseteq D_{i+1}$
    \item $G=\bigcup_{i=1}^\infty D_i$.
    \item $D_j=\bigcup_{v\in D_j\cap \Gamma_{i}} vD_i$, for each $j>i\geq 1$.
\end{enumerate}
 Furthermore, up to taking subsequences again, we can also assume that    $([\Gamma_i:\Gamma_{i+1}])_{i\geq 0}$ grows as fast as needed. From now on, we will assume that for every $i\geq 0$,
   
 $$[\Gamma_i: \Gamma_{i+1}] > \frac{1}{1-(\frac{1}{2})^{(\frac{1}{2})^{i+1}}},$$
or, equivalently,
\begin{equation}\label{crecimiento}
\frac{|D_i|}{|D_{i+1}|}< 1-\left(\frac{1}{2}  \right)^{(\frac{1}{2})^{i+1}}.
\end{equation}

 \begin{lemma}\label{crecimiento-2}
For every $l\geq 0$ we have
$$
\lim_{n\to \infty}\prod_{j=l}^n \left(1-\frac{|D_j|}{|D_{j+1}|}\right)\geq \left(\frac{1}{2}\right)^{(\frac{1}{2})^l}. 
$$
      
 \end{lemma}
\begin{proof}
Let $a_{l,n}=\prod_{j=l}^n \left(1-\frac{|D_j|}{|D_{j+1}|}\right).$ Condition (\ref{crecimiento}) implies that
$$
\log_2{a_{l,n}}\geq -\left(\frac{1}{2}\right)^{l+1}\sum_{j=0}^{n-l}\left(\frac{1}{2}\right)^{j}. 
$$
From this we get
$\lim_{n\to \infty}\log_2{a_{l,n}}\geq -\left( \frac{1}{2}\right)^l$. Since the function $\log_2$ is continuous and increasing, we deduce
$$
\lim_{n\to \infty}a_{l,n}\geq \left( \frac{1}{2}\right)^{(\frac{1}{2})^l}.
$$
\end{proof}

\subsection{Construction of the Toeplitz array.}\label{subsec:toeplitzconstruction}
Consider the alphabet $\Sigma=\{1,2,\dots,r\}$. Inspired by the ideas in  \cite{Williams}, we will construct a Toeplitz element $\eta\in \Sigma^{G}$ such that $X=\overline{O_{\sigma}(\eta)}$ has at least $r$ ergodic measures.   

Fix the sequence $(\alpha_i)_{i\geq 0}$ in $\Sigma$ given by  $\alpha_i=j$ whenever $j\equiv i \;(\bmod\; r)$. We will define the Toeplitz sequence $\eta$ in several steps:
 
 \medskip
 
 {\bf Step 1:} We define $J(0)=\{1_G\}$ and  $\eta(g)=\alpha_1$, for every $g\in \Gamma_1$.
 
 \medskip 
 
 {\bf Step 2:} We define
    \begin{align*}
        J(1)= D_1\setminus \Gamma_1. 
    \end{align*}
We put $\eta(hg)=\alpha_2$ for every  $h\in J(1)$ and $g\in \Gamma_2$.

Note that $J(1)$ is the set of elements in $D_1$ which were not defined in the previous step. Thus, $\eta$ is defined in the whole set $D_1$ in this second step.
 
   \medskip
   
  {\bf Step m+1: } We define
  $$J(m)=  D_m\setminus \bigcup_{i=0}^{m-1}J(i)\Gamma_{i+1}.$$  Define $\eta(hg)=\alpha_{m+1}$ for every  $h\in J(m)$ and $g\in \Gamma_{m+1}$.
    
 Observe that $J(m)$ is the set of elements in $D_m$ which were not defined in the previous steps. Thus  $\eta$ is defined in the whole set $D_m$ during the step $m+1$.

\medskip 

This construction produces a non-periodic Toeplitz sequence since  $G$ is equal to the disjoint union of the sets  $\{J(i)\Gamma_{i+1}: i\geq 0\}$.

 \medskip

\begin{lemma}\label{auxiliar0}
For every $n\geq 0$, we have
$$
J(n+1)=\bigcup_{\gamma\in (D_{n+1}\cap \Gamma_n)\setminus\{1_G\}}\gamma J(n).
$$
\end{lemma}
\begin{proof}
If $u\in D_{n+1}$, then $u=\gamma v$ for some $\gamma\in D_{n+1}\cap \Gamma_n$ and $v\in D_n$.  Thus if $u\in J(n+1)=D_{n+1}\setminus \bigcup_{l=0}^{n}J(l)\Gamma_{l+1}$, then $\gamma\neq 1_G$. 
Furthermore, if $v\in \bigcup_{l=0}^{n-1} J(l)\Gamma_{l+1}$, then $u\in \gamma\bigcup_{l=0}^{n-1}J(l)\Gamma_{l+1}=\bigcup_{l=0}^{n-1}J(l)\Gamma_{l+1}$, which is impossible. Therefore, we obtain that $v\in J(n)$ and we can conclude $J(n+1)\subseteq \bigcup_{\gamma\in (D_{n+1}\cap \Gamma_n)\setminus\{1_G\}}\gamma J(n)$.
On the other hand, 
  if $v\in J(n)$ then  for $\gamma\in( D_{n+1}\cap \Gamma_n)\setminus\{1_G\}$, we obtain that $u=\gamma v\notin \bigcup_{l=0}^{n-1}J(l)\Gamma_{l+1}$. Note that $J(n)\Gamma_{n+1}\cap D_{n+1}=J(n)$. Hence, if $u\in J(n)\Gamma_{n+1}\cap D_{n+1}=J(n)$, we obtain that $\gamma=1_G$, a contradiction.  From this we get
  $\bigcup_{\gamma\in (D_{n+1}\cap \Gamma_n)\setminus\{1_G\}}\gamma J(n)\subseteq J(n+1)$. 
\end{proof}

\medskip

 \begin{lemma}\label{lemma-Per}
     Let $\eta\in\Sigma^G$ be the Toeplitz sequence constructed before. Then,  
     \begin{align*}
         \Per(\eta, \Gamma_i)=\bigcup_{l=0}^{i-1}J(l)\Gamma_{l+1}, \mbox{ for every } i\in\mathbb{N}.
     \end{align*}
 \end{lemma}
\begin{proof}
It is not difficult to verify that $J(i)\Gamma_{i+1}\cap J(k)\Gamma_{k+1}=\emptyset$, for every $i\neq k$. It is also straightforward to check that
\begin{equation}\label{G-union}
    G=\bigcup_{i\geq 0}J(i)\Gamma_{i+1}.
\end{equation}
Let $k\geq i\geq 0$. We will show that $J(k)\Gamma_{k+1} \subseteq G\setminus \Per(\eta,\Gamma_k)$. For that observe that if 
 $m\in J(k)$, then $\eta(m)=\alpha_{k+1}$. On the other hand,  from Lemma \ref{auxiliar0}, we have that  $\gamma m\in J(k+1)$   for every $\gamma\in (D_{k+1}\cap \Gamma_k)\setminus{1_G}$, which implies that  $\eta(\gamma m)=\alpha_{k+2}$. Since $\alpha_{k+1}\neq \alpha_{k+2}$, we get that $m\notin \Per(\eta, \Gamma_k)$. From this we deduce that $m\Gamma_k\subseteq G\setminus \Per(\eta,\Gamma_k)$, and since $m\in J(k)$ is arbitrary, we conclude that  $J(k)\Gamma_{k+1} \subseteq G\setminus \Per(\eta,\Gamma_k)$. From the relation $\Per(\eta,\Gamma_i)\subseteq \Per(\eta, \Gamma_k)$ follows that 
 $J(k)\Gamma_{k+1} \subseteq G\setminus \Per(\eta,\Gamma_k)\subseteq G\setminus \Per(\eta,\Gamma_i)$. From this we get $\bigcup_{k\geq i}J(k)\Gamma_{k+1}\subseteq G\setminus \Per(\eta,\Gamma_i)$. Then, applying equation (\ref{G-union}), we deduce
 $$
 \Per(\eta,\Gamma_i)\subseteq \bigcup_{k=0}^{i-1}J(k)\Gamma_{k+1}.
 $$
From the construction of $\eta$ we have $\bigcup_{l=0}^{i-1}J(l)\Gamma_{l+1}\subseteq\Per(\eta,\Gamma_i)$,  which implies that both sets are the same.
\end{proof}
\begin{proposition}
$(\Gamma_n )_{n\in \mathbb{N}}$ is a period structure for $\eta$.
\end{proposition}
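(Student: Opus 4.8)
The plan is to check the three defining properties of a period structure: that $(\Gamma_n)_{n\in\mathbb{N}}$ is a nested sequence of \emph{essential} groups of periods of $\eta$ satisfying $G=\bigcup_{n}\Per(\eta,\Gamma_n)$. Nestedness is part of the standing hypotheses, and the remaining two easy points are already recorded in the paragraph preceding the statement: $G$ is the disjoint union of the sets $J(i)\Gamma_{i+1}$ with $J(i)\Gamma_{i+1}\subseteq\Per(\eta,\Gamma_{i+1})$, which gives $G=\bigcup_n\Per(\eta,\Gamma_n)$, and each $\Gamma_n$ is a group of periods since $\eta\equiv\alpha_1$ on $\Gamma_1\supseteq\Gamma_n$, so $1_G\in\Per(\eta,\Gamma_n,\alpha_1)$. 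Hence the only real content is to prove that \emph{every $\Gamma_n$ is essential}.

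The combinatorial engine I would isolate first is the following assertion, to be proved by induction on the level $m$: for every $m\geq 1$ one has $\Per(\eta,\Gamma_m)=\bigcup_{i<m}J(i)\Gamma_{i+1}$ (equivalently, a coset becomes $\Gamma$-periodic exactly at the level at which it was colored, never earlier), and for each $w\in J(m)$ the coset $w\Gamma_m$ meets $J(m)\Gamma_{m+1}$ in the single $\Gamma_{m+1}$-subcoset $w\Gamma_{m+1}$. The second clause is immediate because $D_m$ is a fundamental domain, so $J(m)\cap w\Gamma_m=\{w\}$. The first clause is the crux: the inclusion $\supseteq$ is clear from $\Gamma_m\subseteq\Gamma_{i+1}$ for $i<m$, but the reverse inclusion requires showing that no coset of the later, finer pieces $\bigcup_{i\geq m}J(i)\Gamma_{i+1}$ is accidentally monochromatic under $\Gamma_m$. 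This is exactly where the standing assumptions $[\Gamma_i:\Gamma_{i+1}]\geq 3$ and $\alpha_\ell\neq\alpha_{\ell+1}$ enter: a coset $w\Gamma_m$ fresh at some level $>m$ splits into at least three $\Gamma_{m+1}$-subcosets, only one of which is filled at the next level, so at least two are filled later and therefore carry two distinct consecutive symbols. \textbf{This non-monochromaticity step is the main obstacle}, and I expect the bookkeeping of ``one fresh subcoset per parent at each level'' to be the delicate part of the induction.

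Granting this, I would prove essentiality as follows. Because $\Gamma_n$ is normal, $\Per(\sigma^g\eta,\Gamma_n,\alpha)=g\Per(\eta,\Gamma_n,\alpha)$, and since $\Gamma_n$ has finite index each $\Per(\eta,\Gamma_n,\alpha)$ is a finite union of $\Gamma_n$-cosets which $g$ merely permutes; thus the containment hypothesis in the definition of essentiality upgrades to the equality $g\,\Per(\eta,\Gamma_n,\alpha)=\Per(\eta,\Gamma_n,\alpha)$ for all $\alpha$. It therefore suffices to show that the stabilizer $K_n=\{g: g\,\Per(\eta,\Gamma_n,\alpha)=\Per(\eta,\Gamma_n,\alpha)\ \forall\alpha\}$, a group containing $\Gamma_n$, equals $\Gamma_n$. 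The first observation is that membership of $w$ in $\Per(\eta,\Gamma_m,\alpha)$ for $m\leq n$ can be read off the $\Gamma_n$-coloring and the coset combinatorics alone (a $\Gamma_m$-coset is monochromatic of color $\alpha$ iff all its $\Gamma_n$-subcosets are colored $\alpha$); hence any $g\in K_n$ also satisfies $g\,\Per(\eta,\Gamma_m,\alpha)=\Per(\eta,\Gamma_m,\alpha)$ for every $m\leq n$ and every $\alpha$.

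I would then run a descent $g\in\Gamma_1,\Gamma_2,\dots,\Gamma_n$. For the base, the lemma gives $\Per(\eta,\Gamma_1,\alpha_1)=\Gamma_1$, so $g\Gamma_1=\Gamma_1$ and $g\in\Gamma_1$. For the inductive step, suppose $g\in\Gamma_m$ with $m<n$; by normality $g$ fixes each $\Gamma_m$-coset setwise and permutes the $\Gamma_{m+1}$-subcosets inside it. Since $g$ preserves both $\Per(\eta,\Gamma_{m+1})$ and $\Per(\eta,\Gamma_m)$, it preserves their difference, which by the lemma is exactly $J(m)\Gamma_{m+1}$; and for $w\in J(m)$ (nonempty because $[\Gamma_m:\Gamma_{m+1}]\geq 3$ always leaves fresh cosets) the coset $w\Gamma_m$ meets $J(m)\Gamma_{m+1}$ in the unique subcoset $w\Gamma_{m+1}$. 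Thus $g(w\Gamma_{m+1})=w\Gamma_{m+1}$, giving $w^{-1}gw\in\Gamma_{m+1}$ and so $g\in\Gamma_{m+1}$. Iterating up to $m=n-1$ yields $g\in\Gamma_n$, hence $K_n=\Gamma_n$ and $\Gamma_n$ is essential, completing the proof that $(\Gamma_n)_{n\in\mathbb{N}}$ is a period structure for $\eta$.
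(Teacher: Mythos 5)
Your proposal is correct and follows essentially the same route as the paper: both rest on the identity $\Per(\eta,\Gamma_m)=\bigcup_{l<m}J(l)\Gamma_{l+1}$ (which the paper also only asserts, with "observe that") and then run an induction/descent in which, given $g\in\Gamma_m$, normality plus the fact that $J(m)\subseteq D_m$ is part of a fundamental domain forces $g$ to fix the unique subcoset $w\Gamma_{m+1}$ of $w\Gamma_m$ lying in $J(m)\Gamma_{m+1}$, whence $g\in\Gamma_{m+1}$. Your repackaging via the stabilizer subgroup $K_n$ and the containment-to-equality upgrade is only a cosmetic variant of the paper's direct manipulation $u=gv\gamma$.
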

\begin{proof}
Let $i\geq 1$. 
If $g\in G$ is such that $\Per(\eta, \Gamma_i)\subseteq \Per(\sigma^{g}\eta, \Gamma_i)$, then $g^{-1}w\in \Per(\eta, \Gamma_i)$, for every $w\in \Per(\eta, \Gamma_i)$.  Lemma \ref{lemma-Per} implies that
\begin{equation}\label{period-structure2}
 \bigcup_{l=0}^{i-1}J(l)\Gamma_{l+1}\subseteq \bigcup_{l=0}^{i-1}gJ(l)\Gamma_{l+1}.
\end{equation}
We will prove inductively that $\Gamma_i$ is an essential period of $\eta$.

\medskip

For $i=1$, let $g\in G$ be such that $\Per(\eta,\Gamma_1)\subseteq \Per(\sigma^g\eta,\Gamma_1)$. From equation (\ref{period-structure2}) we get that $\Gamma_1\subseteq g\Gamma_1$, which implies that $g\in \Gamma_1$.

\medskip 

Suppose the result is true for $i\geq 1$. We will prove it for $i+1$. Let $g\in G$ be such that $\Per(\eta, \Gamma_{i+1}, \alpha)\subseteq \Per(\sigma^g\eta,\Gamma_{i+1}, \alpha)$, for every $\alpha\in \Sigma$. Since $\Gamma_{i+1}\subset\Gamma_i$, we have $\Per(\eta,\Gamma_i,\alpha)\subseteq \Per(\eta,\Gamma_{i+1},\alpha)$. Consider $h\in\Per(\eta,\Gamma_i, \alpha)$ and $\gamma_i\in\Gamma_i$.  Let $d\in D_{i+1}$ and $\gamma_{i+1}\in\Gamma_{i+1}$ be such that $\gamma_i=d\gamma_{i+1}$. Then $$h\gamma_i\gamma_{i+1}^{-1}=hd\in\Per(\eta,\Gamma_i,\alpha)\subseteq\Per(\eta,\Gamma_{i+1},\alpha)\subseteq\Per(\sigma^g\eta,\Gamma_{i+1},\alpha).$$
    Thus $\alpha=\sigma^g\eta(hd)= \sigma^g\eta(hd\gamma_{i+1})=\sigma^g\eta(h\gamma_i)$. Since $\gamma_i\in \Gamma_i$ was arbitrarily taken, we deduce that $h\in\Per(\sigma^g\eta,\Gamma_{i}, \alpha)$. Because this is true for every $h\in\Per(\eta,\Gamma_i,\sigma)$, the hypothesis implies $g\in\Gamma_i$.

Hence, we get $J(l)\Gamma_{l+1}=gJ(l)\Gamma_{l+1}$, for every $0\leq l\leq i-1$. By Equation (\ref{period-structure2}) this implies that
$
J(i)\Gamma_{i+1}\subseteq gJ(i)\Gamma_{i+1}.
$
From this we get that for  $u\in J(i)\subseteq D_i$, there exist $v\in J(i)\subseteq D_i$ and $\gamma\in \Gamma_{i+1}$ such that $u=gv\gamma$. 
Since $g\in \Gamma_i$ and $\Gamma_i$ is normal,  there exists $g'\in \Gamma_i$ such that $u=gv\gamma=vg'\gamma\in v\Gamma_i$. Since $u$ and $v$ belong to $D_i$, we deduce that $v=u$ and then $g'=\gamma^{-1}\in \Gamma_{i+1}$. Finally, $gv=vg'\in v\Gamma_{i+1}=\Gamma_{i+1}v$, which implies that $g\in \Gamma_{i+1}$.  
\end{proof}

\subsection{Construction of periodic measures on $\Sigma^G$.} In this section we will define a sequence of periodic invariant measures defined on $\Sigma^G$. Then we will show that the accumulation points of this sequence are supported on $X=\overline{O_{\sigma}(\eta)}$.   Unlike the non-amenable case, when $G$ is amenable it is  guaranteed that these limit measures are supported on $X$ (see Remark  \ref{remark:amenable-1} below).

\medskip

For every $n\geq 1$ we define $\eta_n\in \Sigma^G$ as
$$
\eta_n(\gamma D_n)=\eta(D_n), \mbox{ for every } \gamma\in \Gamma_n,
$$
that is, $\eta_n(\gamma g)=\eta(g)$ for every  $\gamma \in \Gamma_n$ and $g\in D_n$. Thus we have $\sigma^{\gamma}(\eta_n)=\eta_n$, for every $\gamma\in \Gamma_n$, which implies that
$$
O_{\sigma}(\eta_n)=\{\sigma^{u^{-1}}(\eta_n): u\in D_n\}. 
$$ 

We define the following probability $\sigma$-invariant Borel measure on $\Sigma^G$,
$$
\mu_n=\frac{1}{|D_n|}\sum_{u\in D_n}\delta_{\sigma^{u^{-1}}(\eta_n)}.  
$$

Let $i\in \{1,\dots, r\}$ and $[i]$ be the subset of all $x\in \Sigma^G$ such that $x(1_G)=i$.

For every $n$ such that $n+1\equiv i \;(\bmod\; r)$ we have
$$
\mu_n([i])=\frac{|J(n)|+|\Per(\eta,\Gamma_n,i)\cap D_n|}{|D_n|}\geq \frac{|J(n)|}{|D_n|}=1-\frac{|D_n\cap \Per(\eta, \Gamma_n)|}{|D_n|}, 
$$
$$
\mu_n([j])=\frac{|\Per(\eta, \Gamma_n,j)\cap D_n|}{|D_n|}\leq \frac{|D_n\cap \Per(\eta, \Gamma_n)|}{|D_n|}, \mbox{ for } j\in \{1,\dots, r\}\setminus\{i\}.
$$
Recall that in Section \ref{sec:regular} it was shown that $d_n=\frac{|D_n\cap \Per(\eta, \Gamma_n)|}{|D_n|}$ defines an increasing sequence converging to some $d\in [0,1]$. Using the same argument, it is possible to show that $d_{n,j}=\frac{|D_n\cap \Per(\eta, \Gamma_n,j)|}{|D_n|}$ also defines an increasing sequence in $[0,1]$. This implies that every accumulation point $\mu$ of  $(\mu_{i+sr-1})_{ s\geq 0}$ satisfies 
$$
\mu([i])=1-d+\lim_{n\to \infty}\frac{|\Per(\eta, \Gamma_n,i)\cap D_n|}{|D_n|}\geq 1-d,
$$
and
$$
\mu([j])=\lim_{n\to \infty}\frac{|\Per(\eta, \Gamma_n,j)\cap D_n|}{|D_n|}=t_j\leq d \mbox{ for every } j\in \{1,\dots, r\}\setminus\{i\}.
$$

\begin{proposition}\label{regular}
For the Toeplitz array $\eta$ defined above we have   
$$
1-d_{n+1}=\left(1-\frac{1}{|D_{1}|}\right)\prod_{j=1}^n \left(1-\frac{|D_j|}{|D_{j+1}|}\right), \mbox{ for every } n\in \mathbb{N}.
$$
This implies that  $d<1-d$.
\end{proposition}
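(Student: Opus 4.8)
The plan is to reduce the closed-form identity to a one-step recursion for the quantity $1-d_n$ and then telescope, after which the inequality $d<1-d$ drops out of the resulting product formula via a standard lower bound on an infinite product.

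First I would record two structural facts about the construction. From Equation~(\ref{period-structure}) we have $\Per(\eta,\Gamma_n)=\bigsqcup_{l=0}^{n-1}J(l)\Gamma_{l+1}$, and since $J(n)=D_n\setminus\bigcup_{l=0}^{n-1}J(l)\Gamma_{l+1}$ this says precisely that $D_n$ is the disjoint union of $J(n)$ and $D_n\cap\Per(\eta,\Gamma_n)$; hence
\[
1-d_n=\frac{|J(n)|}{|D_n|}.
\]
Next I would peel off the last layer: $\Per(\eta,\Gamma_n)=\Per(\eta,\Gamma_{n-1})\sqcup J(n-1)\Gamma_n$, so the task is to count how many points of $D_n$ lie in each of the two pieces. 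The self-similar decomposition $D_n=\bigsqcup_{v\in D_n\cap\Gamma_{n-1}}vD_{n-1}$ from Lemma~\ref{decom}, together with the normality of $\Gamma_{n-1}$, shows that for $g=vw$ with $v\in\Gamma_{n-1}$ and $w\in D_{n-1}$ one has $\Gamma_{n-1}g=\Gamma_{n-1}w$, so $g\in\Per(\eta,\Gamma_{n-1})$ if and only if $w\in\Per(\eta,\Gamma_{n-1})$; counting then gives $|D_n\cap\Per(\eta,\Gamma_{n-1})|=\tfrac{|D_n|}{|D_{n-1}|}\,|D_{n-1}\cap\Per(\eta,\Gamma_{n-1})|=|D_n|\,d_{n-1}$. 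For the second piece, each coset $u\Gamma_n$ with $u\in J(n-1)\subseteq D_{n-1}$ meets the fundamental domain $D_n$ in exactly one point, and distinct $u$ give distinct cosets, so $|D_n\cap J(n-1)\Gamma_n|=|J(n-1)|=|D_{n-1}|(1-d_{n-1})$. Adding the two counts and dividing by $|D_n|$ yields
\[
1-d_n=(1-d_{n-1})\left(1-\frac{|D_{n-1}|}{|D_n|}\right).
\]

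With the base case $d_1=\tfrac{1}{|D_1|}$, which holds because $\Per(\eta,\Gamma_1)=J(0)\Gamma_1=\Gamma_1$ and $D_1\cap\Gamma_1=\{1_G\}$, telescoping this recursion gives the asserted formula, and passing to the limit gives
\[
1-d=\left(1-\frac{1}{|D_1|}\right)\prod_{j\ge1}\left(1-\frac{|D_j|}{|D_{j+1}|}\right).
\]
To obtain $d<1-d$, i.e. $1-d>\tfrac12$, I would apply the Weierstrass bound $\prod_j(1-a_j)\ge 1-\sum_j a_j$ with $a_0=\tfrac{1}{|D_1|}$ and $a_j=\tfrac{|D_j|}{|D_{j+1}|}=\tfrac{1}{[\Gamma_j:\Gamma_{j+1}]}$, obtaining $1-d\ge 1-\left(\tfrac{1}{|D_1|}+\sum_{j\ge1}\tfrac{|D_j|}{|D_{j+1}|}\right)$.

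The main obstacle is twofold. The substantive step is the exact count $|D_n\cap\Per(\eta,\Gamma_{n-1})|=|D_n|\,d_{n-1}$: it is what converts the geometric self-similarity of the tiles $D_n$ into the clean multiplicative recursion, and it is exactly where the normality of the $\Gamma_n$ and the coset bookkeeping must be handled carefully. The second, more delicate point is that the final inequality does \emph{not} follow from the relative indices being merely $\ge 3$ — a constant index would force the product to $0$ and hence $d=1$ — so one genuinely needs $[\Gamma_j:\Gamma_{j+1}]$ to grow fast enough that $\tfrac{1}{|D_1|}+\sum_{j\ge1}\tfrac{1}{[\Gamma_j:\Gamma_{j+1}]}<\tfrac12$. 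Since replacing $(\Gamma_n)_{n\in\mathbb{N}}$ by a subsequence multiplies consecutive indices together, this summability can always be arranged by a further thinning of the sequence, and I would make that choice explicit at precisely this point in the argument.
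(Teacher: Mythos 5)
Your argument is correct and follows essentially the same route as the paper: both derive the one-step recursion $1-d_{n+1}=(1-d_n)\bigl(1-\tfrac{|D_n|}{|D_{n+1}|}\bigr)$ from the same coset count (the new periodic part of $D_{n+1}$ meets $D_{n+1}$ exactly in $J(n)=D_n\setminus(D_n\cap\Per(\eta,\Gamma_n))$) and then telescope from $d_1=\tfrac{1}{|D_1|}$. Your use of the Weierstrass bound $\prod_j(1-a_j)\ge 1-\sum_j a_j$ simply makes quantitative the paper's closing remark that one must pass to a subsequence with $\tfrac{|D_{n_j}|}{|D_{n_{j+1}}|}$ small enough to get $d<\tfrac12$; your observation that relative index $\ge 3$ alone would force $d=1$ is accurate and is exactly why that thinning is needed.
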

\begin{proof}
 Since $(\Per(\eta,\Gamma_{n+1})\setminus \Per(\eta,\Gamma_n))\cap D_{n+1}=D_n\setminus D_n\cap\Per(\eta, \Gamma_n)$, we have 
 \begin{eqnarray*}
  d_{n+1}=\frac{|D_{n+1}\cap \Per(\eta,\Gamma_{n+1})|}{|D_{n+1}|} &  = & \frac{|D_n\cap \Per(\eta, \Gamma_n)||D_{n+1}\cap \Gamma_n|+(|D_n|-|D_n\cap \Per(\eta, \Gamma_n)|)}{|D_{n+1}|}\\
             &= & \frac{|D_n\cap \Per(\eta, \Gamma_n)|}{|D_n|}+\frac{|D_n|}{|D_{n+1}|}\left(1-\frac{|D_n\cap \Per(\eta, \Gamma_n)|}{|D_n|}\right)\\
             & = &d_n+\frac{|D_n|}{|D_{n+1}|}(1-d_n).
\end{eqnarray*}

The previous equation also implies that
\begin{eqnarray*}
    1-d_{n+1}=(1-d_n)-\frac{|D_n|}{|D_{n+1}|}(1-d_n)&= &(1-d_n)\left(1-\frac{|D_n|}{|D_{n+1}|}\right)\\
    &= & (1-d_{n-1})\left(1-\frac{|D_{n-1}|}{|D_{n}|}\right)\left(1-\frac{|D_n|}{|D_{n+1}|}\right)\\
    &= & (1-d_1)\prod_{j=1}^n \left(1-\frac{|D_j|}{|D_{j+1}|}\right).
\end{eqnarray*}
Since $d_1=\frac{1}{|D_1|}$ we get the desired equality. Taking limit on $n$ and applying Lemma \ref{crecimiento-2} for $l=1$, we get $1-d\geq (1-d_1)\frac{1}{\sqrt{2}}.$ On the other hand, condition (\ref{crecimiento}) for $i=0$ implies that $1-d_1>\frac{1}{\sqrt{2}}$, from which we get $1-d>\frac{1}{2}$ and then $d<1-d$. 
\end{proof}

\begin{remark}\label{different-measures}{\rm From Proposition \ref{regular}, 
we get that there are at least $r$ different accumulation points of $(\mu_n)_{n\in\mathbb{N}}$, namely $\nu_1,\dots, \nu_r$, where    $\nu_j$ is an accumulation point of $(\mu_{j+sr-1})_{ s\geq 0}$  for every $j\in \{1,\dots, r\}$.  Furthermore, if $\mu$ is an accumulation point of $(\mu_n)_{n\in\mathbb{N}}$ then there exists $i\in \{1,\dots, r\}$ such that 
$$(\mu([1]),\dots, \mu([r]))=(\nu_i([1]), \dots, \nu_i([r]))=(t_1,\dots, t_{i-1},1-d +t_i, t_{i+1},\dots, t_r)=\vec{t}_i, $$
where
$$
t_j=\lim_{n\to \infty}\frac{|\Per(\eta, \Gamma_n,j)\cap D_n|}{|D_n|}  \mbox{ for every } j\in \{1,\dots, r\}.
$$
Taking subsequences of $(\Gamma_n)_{n\in\mathbb{N}}$ we can assume that $(\mu_{j+sr-1})_{ s\geq 0}$ converges to $\nu_j$, for every $1\leq j\leq r$. In other words, we can assume that $\nu_1,\dots, \nu_r$ are the unique accumulation points of $(\mu_n)_{n\in\mathbb{N}}$.
}
  \end{remark}

\begin{remark}\label{remark:amenable-1}
{\rm Suppose that  $G$ is amenable. Then we can assume that the sequence $(D_n)_{n\in\mathbb{N}}$  is F\o lner (see \cite{CP14}).  Let $x$ be any element in $\Sigma^G$ and define the measures $\mu_n$ as before, taking $\eta=x$. The accumulation points of $(\mu_n)_{n\in\mathbb{N}}$  are supported on $\overline{O_{\sigma}(x)}$. Indeed,  if  $U\subseteq \Sigma^G$ is a cylinder set given by fixing the coordinates of its points in some finite set $F\subseteq G$, then
$$
\mu_n(U)=\frac{|\{ v\in \partial_F D_n: \sigma^{v^{-1}}\eta_n\in U\} |}{|D_n|}+\frac{|\{ v\in D_n\setminus \partial_F D_n: \sigma^{v^{-1}}\eta_n\in U\} |}{|D_n|},
$$
where $\partial_F D_n=\{v\in D_n: vF\not\subseteq D_n\}$. If  $(D_n)_{n\in\mathbb{N}}$ is F\o lner then the first term of the sum goes always to zero with $n$. This implies that $\nu_i(U)>0$ for some $1\leq  i \leq r$, only if $U$ intersects the orbit of $x$, from which we deduce that the measure $\nu_1,\dots, \nu_r$ are supported on $\overline{O_{\sigma}(x)}$.     

When the group $G$ is non-amenable, the supports of the accumulation points of $(\mu_n)_{n\in \mathbb{N}}$ depend on the choice of $x$. Indeed, if $x$ is an element of a subshift without invariant measures (subshifts are a test family for amenability, see \cite{FJ21}) then the accumulation points of $(\mu_n)_{n\in\mathbb{N}}$ can not be supported on $\overline{O_{\sigma}(x)}$.
 }
\end{remark}
 
 In the sequel, we will show that the accumulation points of $(\mu_n)_{n\in\mathbb{N}}$ are supported on $X$ (regardless of whether $G$ is amenable or not).

\begin{lemma}\label{good-relation1}
Let $n\geq 1$. For every $m\geq n+2$ there exists 
\begin{equation}\label{good-relation}
\gamma\in (\Gamma_{n+1}\cap D_m)\setminus (D_{n+1}\Gamma_{n+2} \cup  \dots \cup D_{m-1}\Gamma_m).
\end{equation}
Moreover,
$$
\left|(\Gamma_{n+1}\cap D_m)\setminus (D_{n+1}\Gamma_{n+2} \cup  \dots \cup D_{m-1}\Gamma_m)\right|\geq  \frac{|D_{m}|}{|D_{n+1}|}\prod_{l=1}^{m-n-1}\left(1-\frac{|D_{n+l}|}{|D_{n+l+1}|}   \right).
$$

Furthermore, if $\gamma$ satisfies (\ref{good-relation}), then
$$
\gamma D_{n+1}\subseteq  D_m\setminus (D_{n+1}\Gamma_{n+2} \cup  \dots \cup D_{m-1}\Gamma_m).
$$
\end{lemma}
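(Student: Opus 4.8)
The plan is to handle the three assertions---existence, the cardinality bound, and the ``Furthermore''---by a single analysis of the set
$$A_m=(\Gamma_{n+1}\cap D_m)\setminus\bigcup_{k=n+1}^{m-1}D_k\Gamma_{k+1},$$
carried out entirely through coset bookkeeping modulo the normal groups $\Gamma_k$. First I would record two elementary facts used throughout. By normality, $D_k\Gamma_{k+1}=\Gamma_{k+1}D_k$; and since $D_k$ is a fundamental domain of $\Gamma_k\supseteq\Gamma_{k+1}$, an element $g$ lies in $D_k\Gamma_{k+1}$ precisely when its $\Gamma_k$-representative $\rho_k(g)\in D_k$ satisfies $\rho_k(g)^{-1}g\in\Gamma_{k+1}$. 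The first fact gives at once that for $v\in\Gamma_{k+1}$ one has $vg\in D_k\Gamma_{k+1}\iff g\in D_k\Gamma_{k+1}$. I would also use repeatedly the tower decompositions of \cref{decom}(4), namely $D_m=\bigsqcup_{v\in\Gamma_{n+1}\cap D_m}vD_{n+1}$ and $D_{m+1}=\bigsqcup_{v\in\Gamma_m\cap D_{m+1}}vD_m$, both disjoint by a cardinality count.

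For the cardinality bound I would induct on $m\ge n+2$. The base case is a direct computation: $\Gamma_{n+1}\cap D_{n+2}$ is a transversal of $\Gamma_{n+1}/\Gamma_{n+2}$ of size $|D_{n+2}|/|D_{n+1}|$, and the representative characterization shows that its only element lying in $D_{n+1}\Gamma_{n+2}$ is $1_G$, so $|A_{n+2}|=|D_{n+2}|/|D_{n+1}|-1$, which matches the claimed formula. For the inductive step I would feed $D_{m+1}=\bigsqcup_{v}vD_m$ into $A_{m+1}$. Since every $v\in\Gamma_m\cap D_{m+1}$ lies in $\Gamma_{k+1}$ for $k\le m-1$, removal of the sets $D_k\Gamma_{k+1}$ with $k\le m-1$ acts on each block $v(\Gamma_{n+1}\cap D_m)$ exactly as it does on $\Gamma_{n+1}\cap D_m$, leaving $vA_m$; the only new term $D_m\Gamma_{m+1}$ I would show, via $\rho_m(vw)=w$ and normality of $\Gamma_{m+1}$, removes a full block when $v\in\Gamma_{m+1}$ and nothing otherwise, and the sole such $v$ is $1_G$. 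This yields $|A_{m+1}|=([\Gamma_m:\Gamma_{m+1}]-1)|A_m|=\tfrac{|D_{m+1}|}{|D_m|}\bigl(1-\tfrac{|D_m|}{|D_{m+1}|}\bigr)|A_m|$, which unwinds to the stated product (in fact with equality). Since $[\Gamma_{n+l}:\Gamma_{n+l+1}]\ge 3$, every factor is positive, so $|A_m|$ is a positive integer, hence $\ge 1$, giving existence.

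For the ``Furthermore'' I would fix $\gamma\in A_m$ and $d\in D_{n+1}$. That $\gamma D_{n+1}\subseteq D_m$ is immediate, since $\gamma D_{n+1}$ is one of the blocks of the first tower decomposition. The heart of the matter is the equivalence $\gamma d\in D_k\Gamma_{k+1}\iff\gamma\in D_k\Gamma_{k+1}$ for $n+1\le k\le m-1$, which finishes the proof because $\gamma\notin D_k\Gamma_{k+1}$. Writing $u_0=\rho_k(\gamma)\in\Gamma_{n+1}\cap D_k$ and $\gamma=u_0\tau$ with $\tau\in\Gamma_k$, a short normality computation gives $\gamma d\,\Gamma_k=u_0 d\,\Gamma_k$; the key structural input is that $u_0 d$ already lies in $D_k$, because $u_0 D_{n+1}$ is a block of $D_k=\bigsqcup_{v\in\Gamma_{n+1}\cap D_k}vD_{n+1}$. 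Hence $\rho_k(\gamma d)=u_0 d$ and $\rho_k(\gamma d)^{-1}\gamma d=d^{-1}\tau d$, so $\gamma d\in D_k\Gamma_{k+1}$ is equivalent to $d^{-1}\tau d\in\Gamma_{k+1}$, which by normality of $\Gamma_{k+1}$ is equivalent to $\tau\in\Gamma_{k+1}$, i.e.\ to $\gamma\in D_k\Gamma_{k+1}$. I expect the main obstacle to be precisely this non-commutative bookkeeping: tracking representatives under both left multiplication by $\gamma\in\Gamma_{n+1}$ and right multiplication by $d\in D_{n+1}$, and isolating the one clean fact ($u_0 d\in D_k$) that makes the representatives collapse. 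Once normality reduces each step to a statement in the quotients $\Gamma_k/\Gamma_{k+1}$, the remaining computations are routine.
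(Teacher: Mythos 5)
Your proposal is correct and follows essentially the same route as the paper: induction on $m$ via the block decomposition $D_{m+1}=\bigsqcup_{v\in\Gamma_m\cap D_{m+1}}vD_m$, with normality of the $\Gamma_k$ used to track cosets, and the same representative argument for the ``Furthermore'' part. The only difference is cosmetic: by showing that $D_m\Gamma_{m+1}$ removes exactly the block $v=1_G$ and nothing else, you obtain the product formula as an equality, whereas the paper only constructs the elements $\gamma_0\gamma$ and records the lower bound, which is all that is needed later.
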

 \begin{proof}
 For $m=n+2$, observe that $\gamma\in (\Gamma_{n+1}\cap D_{n+2})\setminus\{1_{G}\}$ satisfies the property. Indeed, if $\gamma=u\gamma'$ for $u\in D_{n+1}$ and $\gamma'\in \Gamma_{n+2}$, then $u=1_G$. This implies $\gamma=\gamma'\in \Gamma_{n+2}$, but since $\gamma\in D_{n+2}$, this is only possible if $\gamma=1_G$. 
 
We will continue by induction on $m\geq n+2$.  Suppose  there exists $$\gamma\in (\Gamma_{n+1}\cap D_m)\setminus (D_{n+1}\Gamma_{n+2} \cup  \dots \cup D_{m-1}\Gamma_m).$$
 
 Let $\gamma_0\in (\Gamma_m\cap D_{m+1})\setminus \{1_G\}$. Since $\gamma\in \Gamma_{n+1}\cap D_m$ and 
 $$
 D_{m+1}=\bigcup_{\gamma'\in \Gamma_m\cap D_{m+1}}\gamma'D_m,
 $$
 we have that $\gamma_0\gamma\in \Gamma_{n+1}\cap D_{m+1}.$  Suppose there exist $n+1\leq s\leq m-1$, $u\in D_s$ and $\gamma'\in \Gamma_{s+1}$ such that $\gamma_0\gamma=u\gamma'$. Since $s+1\leq m$ and $\gamma_0\in \Gamma_m$, this implies that $\gamma\in D_s\Gamma_{s+1}$ which is a contradiction with the choice of $\gamma$. On the other hand,  since $\gamma_0\gamma \in D_{m+1}$ the only way that  $\gamma_0\gamma\in D_m\Gamma_{m+1}$ is having  $\gamma_0\gamma\in D_m$, which is only possible if $\gamma_0=1_G$ (because $\gamma\in D_m$ and $\gamma_0\in \Gamma_m$). We have shown that
 $$\gamma_0\gamma\in (\Gamma_{n+1}\cap D_{m+1})\setminus (D_{n+1}\Gamma_{n+2} \cup  \dots \cup D_{m}\Gamma_{m+1}).$$
 This implies that
$$
N_{m+1,n}\geq \left(\frac{|D_{m+1}|}{|D_m|}-1\right)N_{m,n}, 
$$
where
$$
N_{m+1,n}=|(\Gamma_{n+1}\cap D_{m+1})\setminus (D_{n+1}\Gamma_{n+2} \cup  \dots \cup D_{m}\Gamma_{m+1})|.
$$
From this we get
$$
N_{m+1,n}\geq \prod_{l=1}^{m-n}\left(\frac{|D_{n+l+1}|}{|D_{n+l}|}-1\right)=\frac{|D_{m+1}|}{|D_{n+1}|}\prod_{l=1}^{m-n}\left(1-\frac{|D_{n+l}|}{|D_{n+l+1}|}   \right).
$$

\medskip

Finally, let $\gamma$ be an element  in $\Gamma_{n+1}$ satisfying relation (\ref{good-relation}) and let $u\in D_{n+1}$.  Suppose there exist $n+1\leq s\leq m-1$, $v\in D_s$ and $\gamma'\in \Gamma_{s+1}$ such that $\gamma u=v\gamma'$. We can write $v=\gamma'' u'$, with $u'\in D_{n+1}$ and $\gamma''\in \Gamma_{n+1}\cap D_s$.  The equation $\gamma u=\gamma'' u' \gamma'$ implies $u=u'$ and then $\gamma\in D_s\Gamma_{s+1}$, which is not possible. This finishes the proof.
  \end{proof}
 
 Let $n\geq 1$. We define  
 \begin{equation}\label{definition}
 U_n=\{x\in \Sigma^G: x(D_{n+1})=\eta_n(D_{n+1})\}.
 \end{equation}
Observe that $U_n$ is the set of all $x\in \Sigma^G$ such that $x(\gamma D_n)=\eta(D_n)$, for every $\gamma\in D_{n+1}\cap \Gamma_n$.

 \begin{lemma}\label{good-patches}
Let $n\geq 1$ and  $m>n$ be such that $m \equiv n\; (\bmod\; r)$. If $\gamma \in \Gamma_{n+1}\cap D_m$ satisfies relation (\ref{good-relation}), then $\sigma^{\gamma^{-1}}(\eta)\in U_n$. This implies that  $U_n\cap O_{\sigma}(\eta)\neq \emptyset.$  
\end{lemma}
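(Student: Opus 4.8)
The plan is to unwind the definition of $U_n$ and reduce the claim to a pointwise identity. Since $\sigma^{\gamma^{-1}}(\eta)(h)=\eta(\gamma h)$, membership $\sigma^{\gamma^{-1}}(\eta)\in U_n$ is equivalent to $\eta(\gamma w)=\eta_n(w)$ for every $w\in D_{n+1}$. Fixing such a $w$, I would write $w=\gamma' g$ with $\gamma'\in D_{n+1}\cap\Gamma_n$ and $g\in D_n$ (using the decomposition $D_{n+1}=\bigcup_{v\in D_{n+1}\cap\Gamma_n}vD_n$ from Lemma \ref{decom}), so that $\eta_n(w)=\eta(g)$ by the definition of $\eta_n$. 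The whole task is then to identify $\eta(\gamma w)$ and match it with $\eta(g)$.

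The next step is to locate $\gamma w$ in the defining partition $G=\bigsqcup_{i\geq 0}J(i)\Gamma_{i+1}$. By Lemma \ref{good-relation1}, the hypothesis on $\gamma$ gives $\gamma D_{n+1}\subseteq D_m\setminus(D_{n+1}\Gamma_{n+2}\cup\cdots\cup D_{m-1}\Gamma_m)$, so $\gamma w\in D_m$ and $\gamma w\notin J(s)\Gamma_{s+1}$ for $n+1\leq s\leq m-1$ (because $J(s)\subseteq D_s$). Since every element of $D_m$ lies in $\bigcup_{i=0}^{m}J(i)\Gamma_{i+1}$ (as $J(m)=D_m\setminus\bigcup_{i<m}J(i)\Gamma_{i+1}$ and $J(m)\subseteq J(m)\Gamma_{m+1}$), the point $\gamma w$ must belong to some $J(i')\Gamma_{i'+1}$ with $i'\in\{0,\dots,n\}\cup\{m\}$.

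The heart of the argument is a two-case analysis according to where $g$ sits in $D_n=J(n)\sqcup\bigl(D_n\cap\bigcup_{i<n}J(i)\Gamma_{i+1}\bigr)$, crucially using that the $\Gamma_i$ are normal. If $g\in J(i)\Gamma_{i+1}$ for some $i\leq n-1$, write $g=j\delta$ with $j\in J(i)$, $\delta\in\Gamma_{i+1}$; since $\gamma\gamma'\in\Gamma_n\subseteq\Gamma_{i+1}$, normality lets me slide $\gamma\gamma'$ past $j$, giving $\gamma w=\gamma\gamma' j\delta\in j\Gamma_{i+1}\subseteq J(i)\Gamma_{i+1}$, whence $\eta(\gamma w)=\alpha_{i+1}=\eta(g)=\eta_n(w)$. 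If instead $g\in J(n)$, the same normality computation shows $\gamma w\notin\bigcup_{l<n}J(l)\Gamma_{l+1}$ (otherwise $g$ would lie there too), so combined with the exclusion of levels $n+1,\dots,m-1$ from the previous paragraph we are forced into $i'\in\{n,m\}$; either way $\eta(\gamma w)=\alpha_{i'+1}$ equals $\alpha_{n+1}=\eta_n(w)$, using $m\equiv n\pmod r$ to identify $\alpha_{m+1}=\alpha_{n+1}$.

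Finally, for the nonemptiness statement I would note that choosing $m=n+r$ (which satisfies $m>n$, $m\geq n+2$ since $r>1$, and $m\equiv n\pmod r$), Lemma \ref{good-relation1} guarantees an admissible $\gamma$, so $\sigma^{\gamma^{-1}}(\eta)\in U_n\cap O_\sigma(\eta)$. I expect the main obstacle to be the bookkeeping in the second case: one must rule out all intermediate periodic levels for $\gamma w$ and then recognize that the only two surviving levels, $n$ and $m$, are forced to carry the same symbol precisely because of the congruence $m\equiv n\pmod r$. The normality of the subgroups is exactly what makes the \emph{sliding} computations valid, and it is the interplay of these two features — the congruence selecting a matching symbol and normality transporting group elements past representatives — that drives the proof.
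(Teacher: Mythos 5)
Your argument is correct and follows essentially the same route as the paper's: decompose $D_{n+1}$ into $\Gamma_n$-translates of $D_n$, split into the periodic case (where normality lets you slide $\gamma$ past the $J(i)$-representative) and the case $g\in J(n)$ (where the exclusion furnished by relation (\ref{good-relation}) together with $m\equiv n\ (\mathrm{mod}\ r)$ forces the symbol $\alpha_{n+1}$). The only cosmetic difference is that the paper pins the non-periodic positions down to $J(m)$ exactly, while you allow level $n$ or $m$ and observe both carry the same symbol; this changes nothing.
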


\begin{proof}

Let $m>n$ be such that $m \equiv n \;(\bmod\; r)$ and  let $\gamma_0\in \Gamma_{n+1}\cap D_m$ an element of the group satisfying the relation (\ref{good-relation}).  From the choice of $\gamma_0$, if $g\in D_{n+1}$  then $\eta(\gamma_0g)$ has been defined in step $k\in \{1,\dots, m\}$ if and only if $g\in \Per(\eta, \Gamma_{n+1})$. Observe this implies
 \begin{equation}\label{eq2}
 \eta(\gamma_0D_n)=\eta(D_n)
 \end{equation}
    and
 \begin{equation}\label{eq3}
 \eta(\gamma_0\gamma u)=\eta(u), \forall  u\in D_n\cap \Per(\eta, \Gamma_n) \mbox{ and } \gamma\in D_{n+1}\cap \Gamma_n.
 \end{equation}
  If $u\in D_n\setminus \Per(\eta, \Gamma_n)=J(n)$ then
 $\gamma_0\gamma u\in J(m)$, which implies that
 \begin{equation}\label{eq4}
 \eta(\gamma_0\gamma u)=\alpha_{m+1}=\alpha_{n+1}=\eta(u).
 \end{equation}
 
 From (\ref{eq2}), (\ref{eq3}) and (\ref{eq4}) we get
 $$
 \eta(\gamma_0\gamma D_n)=\eta(D_n), \mbox{ for every } \gamma\in D_{n+1}\cap \Gamma_n, 
 $$ 
 which implies that
 $$
 \eta(\gamma_0D_{n+1})=\eta_n(D_{n+1}),
 $$
 and then $\sigma^{\gamma_0^{-1}}(\eta)\in U_n$. 
\end{proof}

\begin{proposition}\label{several-measures}
 The accumulation points of $(\mu_n)_{n\in\mathbb{N}}$ are supported on $X=\overline{O_{\sigma}(\eta)}$. Therefore, $X$ has at least $r$ different invariant probability measures. 
 \end{proposition}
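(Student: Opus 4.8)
The plan is to show that every accumulation point $\mu$ of $(\mu_n)_{n\in\N}$ gives zero mass to the complement of $X$. Since $X$ is closed, the set of probability measures on $\Sigma^G$ supported on $X$ is weak$^*$-closed, so it suffices to prove that $\mu([P])=0$ for every cylinder $[P]=\{x: x|_F=P\}$ determined by a finite $F\ni 1_G$ and a pattern $P$ that does not occur in $X$. As $\eta$ is a Toeplitz array, $X$ is minimal and its language coincides with the set of finite patterns occurring in $\eta$; hence $[P]\cap X=\emptyset$ is equivalent to $P$ not being a subpattern of $\eta$. A direct computation from the definition of $\mu_n$ gives $\mu_n([P])=|\{u\in D_n:\eta_n(uF)=P\}|/|D_n|$, so the problem reduces to showing that, for $n$ large, no $u\in D_n$ produces the forbidden pattern $P$.

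The first ingredient is to arrange, when selecting the subsequence in the construction of $(D_i)_{i\in\N}$, that the tiles absorb any fixed window. Using that $G$ is residually finite, so that $\bigcap_n\Gamma_n=\{1_G\}$ separates finite subsets of $G$ (exactly as in the first paragraph of the proof of Lemma \ref{fundamental}), I would pass to a subsequence so that for some exhaustion $F_1\subseteq F_2\subseteq\cdots$ of $G$ one has $D_iF_i\subseteq D_{i+1}$ for every $i$. This is a purely group-theoretic property, not a F\o lner one, and it is compatible both with the standing assumption $[\Gamma_i:\Gamma_{i+1}]\geq 3$ and with property $(4)$ of Lemma \ref{decom}, which is stated for all $j>i$ and therefore survives restriction to a subsequence. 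Consequently, for the fixed $F$ above there is $N$ with $uF\subseteq D_nF\subseteq D_{n+1}$ for every $u\in D_n$ and every $n\geq N$.

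The second, and main, ingredient is to transport the periodic pattern $\eta_n$ into the genuine orbit of $\eta$ via the good patches of Lemmas \ref{good-relation1} and \ref{good-patches}. Fixing $n\geq N$, I would take $m\geq n+2$ with $m\equiv n\ (\mathrm{mod}\ r)$ and, by Lemma \ref{good-relation1}, an element $\gamma_0\in\Gamma_{n+1}\cap D_m$ satisfying \eqref{good-relation}; Lemma \ref{good-patches} then yields $\sigma^{\gamma_0^{-1}}(\eta)\in U_n$, that is, $\eta(\gamma_0 g)=\eta_n(g)$ for all $g\in D_{n+1}$ (see \eqref{definition}). For $u\in D_n$ we have $uF\subseteq D_{n+1}$, so $\eta_n(uF)=\eta(\gamma_0 uF)$ is precisely the $F$-window of the point $\sigma^{(\gamma_0 u)^{-1}}(\eta)\in O_{\sigma}(\eta)\subseteq X$; in particular $\eta_n(uF)$ occurs in $\eta$ and hence cannot equal the forbidden pattern $P$. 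Therefore $\mu_n([P])=0$ for all $n\geq N$, which forces $\mu([P])=\lim_s \mu_{n_s}([P])=0$ along any convergent subsequence. Since this holds for every forbidden cylinder, $\mu$ is supported on $X$. Combining this with Remark \ref{different-measures}, which exhibits $r$ distinct accumulation points $\nu_1,\dots,\nu_r$ of $(\mu_n)_{n\in\N}$ separated by their values on the cylinders $[i]$, gives $r$ distinct invariant probability measures of $(X,\sigma,G)$.

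The delicate point is the interplay between these two ingredients. In the amenable setting (Remark \ref{remark:amenable-1}) one simply lets the boundary of a F\o lner sequence vanish; here no F\o lner sequence exists, and the wrapped values of $\eta_n$ near the boundary of $D_n$ need not reproduce windows of $\eta$. The good patches repair this, but a single patch realizes the level-$n$ periodic pattern only on the block $D_{n+1}$, so it is essential that every window $uF$ with $u\in D_n$ already fits inside $D_{n+1}$. Securing this fit through the residual-finiteness bookkeeping in the choice of tiles, rather than through any averaging over boundaries, is what I expect to be the crux of the proof.
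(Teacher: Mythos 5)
Your proposal is correct and follows essentially the same route as the paper: the paper also reduces to cylinder sets, arranges by passing to a subsequence that each window $uD_n$ (there via the condition $D_m\cdot D_m\subseteq D_{m+1}$, in your version via $D_iF_i\subseteq D_{i+1}$) fits inside the next tile, and then invokes Lemmas \ref{good-relation1} and \ref{good-patches} to realize the periodic pattern of $\eta_m$ on that tile inside the actual orbit of $\eta$. The only cosmetic difference is that you argue contrapositively (forbidden patterns get measure zero) while the paper shows that every positive-measure cylinder meets $O_\sigma(\eta)$; the crux you identify --- window absorption plus good patches replacing the F\o lner boundary argument --- is exactly the paper's.
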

\begin{proof}  
Let $\nu$ be an accumulation point of $(\mu_n)_{n\in\mathbb{N}}$. We have that $\nu=\nu_i$  for some $i\in \{1,\dots, r\}$.

Let $C\subseteq \Sigma^G$ be a clopen set such that $\nu(C)=\varepsilon>0$. We can assume that $C=\{y\in \Sigma^G: y(D_n)=P\}$, where $P$ is some element in $\Sigma^{D_n}$, for some fixed $n\geq 1$.  Since $\nu(C)>0$, we have that $\mu_{i+rs-1}(C)>0$, for infinitely many $s$'s. This implies that $O_{\sigma}(\eta_m)\cap C\neq \emptyset$  for infinitely many $m$'s  which are equal $\bmod\; r$. From this we get  there exists $u_m\in D_m$ such that $\eta_m(u_mD_n)=P$. We can always assume that  $D_m\cdot D_m \subseteq D_{m+1}$, which implies that $u_mD_m\subseteq D_{m+1}$.
From Lemma \ref{good-patches}, it follows there exists $g\in G$ such that $\sigma^g(\eta)\in C$. Therefore $\nu$ is supported on $\overline{O_{\sigma}(\eta)}$.

From Remark \ref{different-measures} we deduce that $X$ has at least $r$ different invariant probability measures. 
\end{proof}

\subsection{Lower bound for the number of ergodic measures.} In this section we   show that   $X$ has at least $r$ ergodic measures.  First we need the following Lemma.

\begin{lemma}\label{auxiliar}
For every $i\geq 1$ and $\gamma\in \Gamma_i$, there exists $l\geq i$ such that $\gamma J(i)\subseteq J(l)\Gamma_{l+1}$. 
\end{lemma}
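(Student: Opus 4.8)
The plan is to exploit the self-similar tower structure of $(D_s)_s$ to factor $\gamma$ into ``one-step'' pieces, and then to track level by level what left multiplication does to the whole block $J(i)$, rather than to individual points.

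First I would fix $m\ge i$ with $\gamma\in D_m\cap\Gamma_i$, which is possible since the $D_m$ increase to $G$ (\cref{decom}). Using repeatedly the tower decomposition $D_{s+1}=\bigcup_{v\in D_{s+1}\cap\Gamma_s}vD_s$ from \cref{decom} together with the normality of the $\Gamma_s$, I would peel off the top coset representative one level at a time: each partial quotient again lies in $\Gamma_i$ (because $\gamma\in\Gamma_i$ and the representatives lie in $\Gamma_s\subseteq\Gamma_i$), so the recursion is well defined and terminates at $D_i\cap\Gamma_i=\{1_G\}$. This writes $\gamma=v_{m-1}v_{m-2}\cdots v_i$ with $v_s\in D_{s+1}\cap\Gamma_s$ for $i\le s\le m-1$.

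Next I would let $t$ be the first level at which this factorization stalls, namely the smallest index $s\ge i$ with $v_s=1_G$, and $t=m$ if all of $v_i,\dots,v_{m-1}$ are nontrivial. Since $v_i,\dots,v_{t-1}$ are all nontrivial, iterating \cref{auxiliar0} yields $v_{t-1}\cdots v_i\,J(i)\subseteq J(t)$, so for every $u\in J(i)$ the element $w(u):=v_{t-1}\cdots v_i\,u$ lies in $J(t)$. On the other hand the remaining prefix $a:=v_{m-1}\cdots v_t$ lies in $\Gamma_{t+1}$: indeed $v_t=1_G$ (or the prefix is empty when $t=m$) while every later factor $v_s$ with $s>t$ already belongs to $\Gamma_s\subseteq\Gamma_{t+1}$. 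Writing $\gamma u=a\,w(u)$ and using normality of $\Gamma_{t+1}$ in the form $a\,w(u)\in w(u)\,\Gamma_{t+1}\subseteq J(t)\Gamma_{t+1}$, I would conclude $\gamma J(i)\subseteq J(t)\Gamma_{t+1}$, so $l=t\ge i$ works. I would also record the two degenerate cases explicitly, as they fix the empty-product conventions: when $t=i$ one has $v_i=1_G$, hence $\gamma\in\Gamma_{i+1}$ and $l=i$; when $t=m$ the prefix is $1_G$ and $\gamma J(i)\subseteq J(m)$.

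The main obstacle is the uniformity of $l$ over $u\in J(i)$: a priori the level of $\gamma u$ could vary with $u$. The point of choosing $t$ at the first stall is precisely that it decouples the two effects. The nontrivial factors below $t$ move the entire block $J(i)$ into the single set $J(t)$ — this is exactly the block-moving content of \cref{auxiliar0}, which operates on whole pieces and not on individual points — whereas all factors at level $\ge t$ sit so deep inside $\Gamma_{t+1}$ that, by normality, left multiplication by them cannot push a point out of the level-$t$ set $J(t)\Gamma_{t+1}$. Thus the level $t$ is determined by $\gamma$ alone and is the same for every $u\in J(i)$, which is what the statement requires.
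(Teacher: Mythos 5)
Your proof is correct, but it follows a genuinely different route from the paper's. The paper first observes that $\gamma J(i)$ is disjoint from $\Per(\eta,\Gamma_i)$, hence contained in $\bigcup_{l\geq i}J(l)\Gamma_{l+1}$; it then takes $l$ \emph{minimal} with $\gamma J(i)\cap J(l)\Gamma_{l+1}\neq\emptyset$, extracts from a single witness $u$ a two-term decomposition $\gamma=\gamma'\gamma_{l+1}'$ with $\gamma'\in\Gamma_i\cap D_l$ and $\gamma_{l+1}'\in\Gamma_{l+1}$, and concludes that every $\gamma s$ lies in $D_l\Gamma_{l+1}$, which by minimality of $l$ forces it into $J(l)\Gamma_{l+1}$; Lemma \ref{auxiliar0} is never invoked. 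You instead expand $\gamma$ fully through the tower $D_{s+1}=\bigcup_{v\in D_{s+1}\cap\Gamma_s}vD_s$ as $v_{m-1}\cdots v_i$, locate the first trivial factor $v_t$, push $J(i)$ into $J(t)$ by iterating Lemma \ref{auxiliar0} on the nontrivial factors below level $t$, and absorb the remaining prefix into $\Gamma_{t+1}$ by normality. Your version is more constructive --- it exhibits $l$ explicitly as the first stall level of the $\Gamma_s$-adic expansion of $\gamma$ rather than characterizing it as an abstract minimum, and it makes the combinatorial content of Lemma \ref{auxiliar0} do the work --- at the cost of carrying the full factorization and its degenerate cases ($t=i$ and $t=m$), which you correctly handle. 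The paper's argument is shorter because it only ever needs the two-term splitting of $\gamma$ obtained from one point of $\gamma J(i)$. Both arguments ultimately rest on the same two structural inputs: the uniqueness of the tower decomposition of the fundamental domains and the normality of the $\Gamma_n$.
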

\begin{proof}
Since $J(i)=D_i\setminus \Per(\eta, \Gamma_i)$, then $\gamma J(i)\cap \Per(\eta, \Gamma_i)=\emptyset$. This implies that
$$
\gamma J(i)\subseteq \bigcup_{l\geq i}J(l)\Gamma_{l+1}.
$$
Let $l=\min\{k\geq i: \gamma J(i)\cap J(k)\Gamma_{k+1}\neq \emptyset\}$.  Let $u\in J(i)$ be such that $\gamma u= v_l \gamma_{l+1}$, for some $v_l\in J(l)$ and $\gamma_{l+1}\in \Gamma_{l+1}$. Since $v_l\in D_l$, there exist $v\in D_i$ and $\gamma'\in \Gamma_i\cap D_l$ such that $v_l=\gamma'v$. The relation $\gamma u= v_l \gamma_{l+1}$ implies $v=u$ and $\gamma=\gamma'\gamma_{l+1}'$, for some $\gamma_{l+1}'\in \Gamma_{l+1}$.  Thus if $s\in J(i)$ then $\gamma s= \gamma'\gamma_{l+1}'s=\gamma's\gamma_{l+1}''$, for some $\gamma_{l+1}''\in \Gamma_{l+1}$. This implies that $\gamma s\in D_l\Gamma_{l+1}\subseteq J(l)\Gamma_{l+1} \cup \bigcup_{k=0}^{l-1}J(k)\Gamma_{k+1}$.  The choice of $l$ implies that $\gamma s\in J(l)\Gamma_{l+1}$,  and then $\gamma J(i)\subseteq J(l)\Gamma_{l+1}$.
\end{proof}

\begin{corollary}\label{partition}
For every $i\geq 0$ and $\gamma\in \Gamma_i$, there exists $\alpha\in \Sigma$ such that 
$$
\eta(g)=\alpha \mbox{ for every } g\in \gamma J(i).
$$
\end{corollary}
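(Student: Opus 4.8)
The plan is to read this off directly from Lemma \ref{auxiliar} together with the defining property of $\eta$. Recall that in the construction of Section \ref{subsec:toeplitzconstruction}, at step $l+1$ one sets $\eta(hg)=\alpha_{l+1}$ for every $h\in J(l)$ and $g\in\Gamma_{l+1}$; in other words $\eta$ is identically equal to $\alpha_{l+1}$ on the block $J(l)\Gamma_{l+1}$. Since $G$ is the disjoint union of the sets $J(l)\Gamma_{l+1}$, the value of $\eta$ at a point is completely determined by which block that point lies in. This single observation is what converts a containment statement into a constancy statement.

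First I would dispose of the case $i=0$ separately, since Lemma \ref{auxiliar} is stated only for $i\geq 1$. Here $J(0)=\{1_G\}$ is a singleton, so $\gamma J(0)=\{\gamma\}$ consists of a single element and $\eta$ is trivially constant on it, with $\alpha=\eta(\gamma)$. No block analysis is needed in this degenerate case.

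For $i\geq 1$ I would apply Lemma \ref{auxiliar} to the given $\gamma\in\Gamma_i$, obtaining an index $l\geq i$ with $\gamma J(i)\subseteq J(l)\Gamma_{l+1}$. Combining this inclusion with the observation above that $\eta$ takes the constant value $\alpha_{l+1}$ on $J(l)\Gamma_{l+1}$, I would conclude that $\eta(g)=\alpha_{l+1}$ for every $g\in\gamma J(i)$, so that $\alpha=\alpha_{l+1}$ witnesses the claim.

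There is essentially no obstacle here: all the combinatorial work has already been carried out in Lemma \ref{auxiliar}, whose content is precisely that the translate $\gamma J(i)$ does not split across several blocks but instead sits inside a single block $J(l)\Gamma_{l+1}$. Once that containment is available, constancy of $\eta$ is immediate from its definition. The only mild subtlety is remembering to treat $i=0$ on its own, which is painless because $J(0)$ is a singleton.
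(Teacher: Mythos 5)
Your proposal is correct and follows exactly the paper's own argument: the paper also dismisses $i=0$ as trivial and then, for $i\geq 1$, invokes Lemma \ref{auxiliar} to place $\gamma J(i)$ inside a single block $J(l)\Gamma_{l+1}$ on which $\eta$ is constant equal to $\alpha_{l+1}$ by construction. Your extra remark that $G$ is the disjoint union of the blocks $J(l)\Gamma_{l+1}$ is a correct and harmless elaboration of the same point.
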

\begin{proof}
The case $i=0$ is trivial. Suppose that $i\geq 1$ and $\gamma\in \Gamma_i$.  

From Lemma \ref{auxiliar}, there exists $l\geq i$ such that $\gamma J(i)\subseteq J(l)\Gamma_{l+1}$. By the definition of $\eta$ we get $\eta(g)=\alpha_{l+1}$, for every $g\in \gamma J(i)$.   
\end{proof}

\subsubsection{Partitions and invariant measures}\label{subsec:invmeasuresfinite}
 Recall that $\{\sigma^{v^{-1}}C_n: v\in D_n\}$ is a clopen partition of $X$, where  
    $$
C_n=\{ x\in X: \Per(x,\Gamma_n, \alpha)=\Per(\eta, \Gamma_n, \alpha), \mbox{ for every } \alpha\in \Sigma\}.
$$
  
For every $1\leq i\leq r$, let 
  $$C_{n,i}=\{x\in C_n: x(g)=i \mbox{ for every } g\in J(n)\}.$$
Corollary \ref{partition} implies $\{C_{n,i}: 1\leq i\leq r\}$ is a covering of $C_n$, therefore, $$\cP_n=\{\sigma^{v^{-1}}C_{n,i}: 1\leq i\leq r, v\in D_n\}$$ is a clopen partition of $X$.

  \begin{lemma}\label{rel-partition}
 For every $n\geq 1$ and $1\leq j\leq r$, we have
  \begin{enumerate}
  \item $C_{n+1}\subseteq C_{n,\alpha_{n+1}}$, and
  \item $\sigma^{\gamma^{-1}}C_{n+1,j}\subseteq C_{n,j}$, for every $\gamma\in (\Gamma_{n}\cap D_{n+1})\setminus \{1_G\}$. 
  \end{enumerate}
  \end{lemma}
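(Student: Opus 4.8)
The plan is to prove both inclusions by unwinding the definitions of $C_n$, $C_{n,i}$ and of the array $\eta$, relying on two facts established earlier. The first is that $C_n=X_n=\overline{\{\sigma^{\gamma}\eta:\gamma\in\Gamma_n\}}$, so that $\Gamma_{n+1}\subseteq\Gamma_n$ immediately gives $C_{n+1}\subseteq C_n$. The second is the equivalence $\sigma^gC_n=\sigma^hC_n\iff g\Gamma_n=h\Gamma_n$ from the lemma asserting that $\{\sigma^{g^{-1}}C_n:g\in D_n\}$ is a clopen partition of $X$.

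For part (1), I would first record the combinatorial fact that $J(n)\subseteq\Per(\eta,\Gamma_{n+1},\alpha_{n+1})$. This comes straight from Step $n+1$ of the construction, where $\eta(hg)=\alpha_{n+1}$ for every $h\in J(n)$ and $g\in\Gamma_{n+1}$; normality of $\Gamma_{n+1}$ lets me rewrite $\eta(\gamma h)=\eta(h\gamma')$ for a suitable $\gamma'\in\Gamma_{n+1}$ and conclude that $h\in\Per(\eta,\Gamma_{n+1},\alpha_{n+1})$. Then, for $x\in C_{n+1}$, the defining equality $\Per(x,\Gamma_{n+1},\alpha_{n+1})=\Per(\eta,\Gamma_{n+1},\alpha_{n+1})$ forces every $g\in J(n)$ to satisfy $x(\gamma g)=\alpha_{n+1}$ for all $\gamma\in\Gamma_{n+1}$; evaluating at $\gamma=1_G$ gives $x(g)=\alpha_{n+1}$. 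Combined with $C_{n+1}\subseteq C_n$, this yields $x\in C_{n,\alpha_{n+1}}$.

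For part (2), I would fix $\gamma\in(\Gamma_n\cap D_{n+1})\setminus\{1_G\}$ and $x\in C_{n+1,j}$. Membership in $C_n$ is immediate: since $\gamma^{-1}\in\Gamma_n$, the equivalence above gives $\sigma^{\gamma^{-1}}C_n=C_n$, and $x\in C_{n+1}\subseteq C_n$ then yields $\sigma^{\gamma^{-1}}x\in C_n$. For the coordinate condition, I would use the shift convention $(\sigma^{\gamma^{-1}}x)(g)=x(\gamma g)$ together with Lemma \ref{auxiliar0}, which gives $\gamma J(n)\subseteq J(n+1)$; hence for $g\in J(n)$ we have $\gamma g\in J(n+1)$ and $x(\gamma g)=j$, so $(\sigma^{\gamma^{-1}}x)(g)=j$ for every $g\in J(n)$. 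This places $\sigma^{\gamma^{-1}}x$ in $C_{n,j}$, which is the desired inclusion.

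The argument is essentially bookkeeping, so I do not expect a genuine obstacle. The only places demanding care are keeping the shift convention $\sigma^gx(h)=x(g^{-1}h)$ consistent—so that the generator $\gamma$ produced by the decomposition of $J(n+1)$ in Lemma \ref{auxiliar0} matches the shift $\sigma^{\gamma^{-1}}$ in the statement—and noting that normality of $\Gamma_{n+1}$ is what makes the first combinatorial step $J(n)\subseteq\Per(\eta,\Gamma_{n+1},\alpha_{n+1})$ go through.
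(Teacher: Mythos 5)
Your proof is correct and follows essentially the same route as the paper: part (1) via $C_{n+1}\subseteq C_n$ together with the fact that $\eta$ (hence every $x\in C_{n+1}$) takes the value $\alpha_{n+1}$ on $J(n)$, and part (2) via $\sigma^{\gamma^{-1}}C_n=C_n$ for $\gamma\in\Gamma_n$ combined with the decomposition $J(n+1)=\bigcup_{\gamma\in(D_{n+1}\cap\Gamma_n)\setminus\{1_G\}}\gamma J(n)$ from Lemma \ref{auxiliar0}. The only cosmetic differences are that the paper gets the value on $J(n)$ from $D_n\subseteq\Per(\eta,\Gamma_{n+1})$ rather than from $J(n)\subseteq\Per(\eta,\Gamma_{n+1},\alpha_{n+1})$, which is an equivalent bookkeeping choice.
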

  \begin{proof}
  Since $\Per(\eta, \Gamma_{n})\subseteq \Per(\eta, \Gamma_{n+1})$, we have $C_{n+1}\subseteq C_n$. Furthermore, $D_n\subseteq \Per(\eta, \Gamma_{n+1})$, which implies that
  $x(D_n)=\eta(D_n)$, for every $x\in C_{n+1}$. In particular, $x(g)=\eta(g)=\alpha_{n+1}$, for every $g\in J(n)$. From this we get $C_{n+1}\subseteq C_{n,\alpha_{n+1}}$.  
  
  \medskip
  
Using that $C_{n+1,j}\subseteq C_n$, we get that for every $\gamma\in \Gamma_{n}$,  $\sigma^{\gamma^{-1}}C_{n+1,j}\subseteq C_n$. On the other hand, if $\gamma\in (\Gamma_n\cap D_{n+1})\setminus \{1_G\}$ and $y\in C_{n+1,j}$, then Lemma \ref{auxiliar0} implies that $\sigma^{\gamma^{-1}}(y)(g)=y(\gamma g)=j$, for every $g\in J(n)$. This shows that $\sigma^{\gamma^{-1}}C_{n+1,j}\subseteq C_{n,j}$. 
  \end{proof}

 Let $\triangle$ be the convex generated by the vectors $\{\vec{t}_1, \dots, \vec{t}_r\}\in \mathbb{R}^r$ (see definition in Remark \ref{different-measures}). That is,
 $$
 \triangle=\left\{\sum_{i=1}^r\alpha_i\vec{t}_i: \sum_{i=1}^r\alpha_i=1, \alpha_1,\dots, \alpha_r\geq 0\right\}.
 $$
 Since the vectors $\vec{t}_1,\dots, \vec{t}_r$ are linearly independent, the convex $\triangle$ is a simplex.

\begin{proposition}\label{at-least}
There is an affine surjective map $p$ from the space of invariant probability measures of $X$ to $\triangle$. Furthermore,  $p(\nu_i)=\vec{t}_i$ for every $1\leq i\leq r$. This implies that $X$ has at least $r$ ergodic measures. 
\end{proposition}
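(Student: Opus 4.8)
The plan is to exhibit the map explicitly as
$$p(\mu)=\big(\mu([1]),\dots,\mu([r])\big),\qquad \mu\in\cM(X,\sigma,G),$$
and to verify it has all the required properties. This $p$ is visibly affine, and it is continuous from the weak-$*$ topology to $\R^r$ because each $[i]$ is clopen. By the very definition of the vectors $\vec{t}_i$ in Remark \ref{different-measures} we have $p(\nu_i)=(\nu_i([1]),\dots,\nu_i([r]))=\vec{t}_i$, so the second assertion will follow once $p$ is shown to take values in $\triangle$. It therefore remains to prove $p(\cM(X,\sigma,G))=\triangle$ and to deduce the bound on ergodic measures.

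The heart of the argument is to control $\mu([j])$ for an \emph{arbitrary} invariant measure $\mu$ by means of the clopen partition $\cP_n=\{\sigma^{v^{-1}}C_{n,i}:1\le i\le r,\ v\in D_n\}$. On each atom $\sigma^{v^{-1}}C_{n,i}$ the value of $x$ at $1_G$ is constant: if $v\in D_n\cap\Per(\eta,\Gamma_n)$ that value equals $\eta(v)$, since points of $C_n$ agree with $\eta$ on periodic positions, while if $v\in J(n)=D_n\setminus\Per(\eta,\Gamma_n)$ it equals $i$, by the definition of $C_{n,i}$. Hence $[j]$ is a union of atoms of $\cP_n$; splitting the sum according to whether $v$ is periodic or lies in $J(n)$, and using the invariance $\mu(\sigma^{v^{-1}}C_{n,i})=\mu(C_{n,i})$ together with $\mu(C_n)=1/|D_n|$ (coming from the partition $\{\sigma^{v^{-1}}C_n:v\in D_n\}$), I would obtain
$$\mu([j])=\frac{|\Per(\eta,\Gamma_n,j)\cap D_n|}{|D_n|}+|J(n)|\,\mu(C_{n,j}),\qquad\text{for every }n\ge 1.$$

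Letting $n\to\infty$, the first term tends to $t_j$, so $\beta_j:=\lim_n|J(n)|\,\mu(C_{n,j})$ exists and equals $\mu([j])-t_j\ge 0$. Summing over $j$ and using $\sum_j\mu([j])=1$ and $\sum_j t_j=d$ gives $\sum_j\beta_j=1-d$, which is strictly positive because $d<1-d$ by Proposition \ref{regular}. A direct coordinate check, using $(\vec{t}_i)_k=t_k+(1-d)\delta_{ik}$, then shows $p(\mu)=\sum_{j=1}^r\frac{\beta_j}{1-d}\,\vec{t}_j$, a convex combination of the $\vec{t}_j$, so $p(\mu)\in\triangle$. This proves $p(\cM(X,\sigma,G))\subseteq\triangle$. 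The reverse inclusion is immediate from convexity: since $\nu_1,\dots,\nu_r\in\cM(X,\sigma,G)$ and $p(\nu_i)=\vec{t}_i$, each convex combination $\sum_i\alpha_i\nu_i$ is invariant and maps to $\sum_i\alpha_i\vec{t}_i$, whence $\triangle\subseteq p(\cM(X,\sigma,G))$. Thus $p$ is the desired affine surjection.

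To conclude that $X$ has at least $r$ ergodic measures I would use a standard extreme-point lifting. Each vertex $\vec{t}_i$ is an extreme point of the simplex $\triangle=p(\cM(X,\sigma,G))$, so the nonempty compact convex set $p^{-1}(\vec{t}_i)\cap\cM(X,\sigma,G)$ has an extreme point $e_i$ by Krein--Milman; since $\vec{t}_i$ is extreme in the image, any decomposition of $e_i$ inside $\cM(X,\sigma,G)$ projects to a decomposition of $\vec{t}_i$ and so must be trivial, forcing $e_i$ to be extreme in $\cM(X,\sigma,G)$, i.e.\ ergodic. As the $\vec{t}_i$ are distinct, the $e_i$ are distinct, giving at least $r$ ergodic measures. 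I expect the decomposition formula for $\mu([j])$ to be the main obstacle: it is the step where the combinatorics of $C_n$, its atoms $C_{n,i}$, and the aperiodic blocks $J(n)$ (through Corollary \ref{partition} and Lemma \ref{rel-partition}) must be assembled correctly, whereas the limit computation, the convexity inclusion, and the Krein--Milman lifting are routine.
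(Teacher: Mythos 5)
Your proposal is correct and follows essentially the same route as the paper: the identity $\mu([j])=\tfrac{|\Per(\eta,\Gamma_n,j)\cap D_n|}{|D_n|}+|J(n)|\,\mu(C_{n,j})$ is exactly the paper's decomposition of $C_{0,j}$ into the atoms $\sigma^{g^{-1}}C_{n,j}$ ($g\in J(n)$) and $\sigma^{g^{-1}}C_n$ ($g\in \Per(\eta,\Gamma_n,j)\cap D_n$), and passing to the limit yields the same convex combination $\sum_j\alpha_j\vec t_j$ with $\alpha_j=\beta_j/(1-d)$. The only (harmless) differences are that you note the limit of $|J(n)|\mu(C_{n,j})$ exists without passing to a subsequence, and you finish with a Krein--Milman extreme-point lifting where the paper uses a dimension count on the affine surjection onto the $(r-1)$-dimensional simplex $\triangle$.
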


\begin{proof}
 For every $n\in\mathbb{N}$ and $i\in \{1,\dots,r\}$ we set $a_{n,i} =|D_n\cap \Per(\eta, \Gamma_n, i)|$.
 
Let $C_{0,i}=[i]\cap X$. We have that 
    \begin{align*}
        C_{0,i}=\bigcup_{g\in J(n)}\sigma^{g^{-1}} C_{n,i}\cup \bigcup_{g\in \Per(\eta,\Gamma_n,i)\cap D_n}\sigma^{g^{-1}}C_n.
    \end{align*}
    Therefore, for every invariant probability measure $\mu$ of $X$ we have
    \begin{eqnarray*}
        \mu(C_{0,i}) & = & |J(n)|\mu(C_{n,i})+a_{n,i} \mu(C_n)\\
          &=& (|J(n)|+a_{n,i})\mu(C_{n,i})+a_{n,i}\sum_{j\neq i}\mu(C_{n,j})\\
           &=& \frac{|J(n)|+a_{n,i}}{|D_n|}\mu(C_{n,i})|D_n|+\frac{a_{n,i}}{|D_n|}\sum_{j\neq i}\mu(C_{n,j})|D_n|.\\
    \end{eqnarray*}
    Taking a subsequence $(n_k)_k$ in order that $\lim_{k\to \infty}\mu(C_{n_k,j})|D_{n_k}|=\alpha_j\in [0,1]$, we get that
    $$
    \mu(C_{0,i})=\sum_{j=1}^r\vec{t}_j(i)\alpha_j,
    $$
    with $\sum_j\alpha_j=1.$ This implies that $(\mu(C_{0,1}),\dots, \mu(C_{0,r}))$ belongs to $\triangle$. The map $\mu\mapsto (\mu(C_{0,1}),\dots, \mu(C_{0,r}))$ is an affine map from the set of invariant probabilty measures of $X$ to $\triangle$, sending an  accumulation  point $\nu_i$  of $(\mu_{i+kr-1})_{k\in\mathbb{N}}$ to $\vec{t}_i$. We will call this map as $p$. Furthermore, we have that $p$ is surjective and then the set of invariant probability measures of $X$ has at least $r$ extreme points (observe that if $n<r$, then the linear maps from a  $n$-dimensional vector space  to a $r$-dimensional one, cannot be surjective). 

\end{proof}

\begin{lemma}\label{partitions-determine-measures}
Let $p$ be the surjective affine map from the space of invariant probability measures of $X$ to $\triangle$ introduced in Proposition \ref{at-least}. If $\mu$ and $\nu$ are two invariant measures of $X$ such that $p(\mu)=p(\nu)$, then $\mu|_{\mathcal{P}_n}= \nu|_{\mathcal{P}_n}$, for every $n\in \mathbb{N}$.
\end{lemma}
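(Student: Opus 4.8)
The plan is to reduce the statement to a single numerical assertion: that the values $\mu(C_{n,i})$, for $1\le i\le r$ and $n\in\mathbb N$, are completely determined by the vector $p(\mu)=(\mu(C_{0,1}),\ldots,\mu(C_{0,r}))$. Since $\mu$ is invariant, for every $v\in D_n$ we have $\mu(\sigma^{v^{-1}}C_{n,i})=\mu(C_{n,i})$, so the restriction $\mu|_{\mathcal{P}_n}$ is entirely encoded by the $r$ numbers $\mu(C_{n,1}),\ldots,\mu(C_{n,r})$. Thus it suffices to prove, by induction on $n$, that $p(\mu)=p(\nu)$ forces $\mu(C_{n,i})=\nu(C_{n,i})$ for all $i$.

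The heart of the argument is a linear recursion coming from the refinement of $C_n$ by level-$(n+1)$ cells. Recall that $\{\sigma^{w^{-1}}C_{n+1}:w\in D_{n+1}\}$ partitions $X$ and that $C_{n+1}\subseteq C_n$; writing $w=\gamma u$ with $\gamma\in D_{n+1}\cap\Gamma_n$, $u\in D_n$, and using that $\sigma^{\gamma^{-1}}C_n=C_n$ for $\gamma\in\Gamma_n$, one checks that $\sigma^{w^{-1}}C_{n+1}\subseteq C_n$ holds exactly when $w\in D_{n+1}\cap\Gamma_n$. Hence
$$
C_n=\bigsqcup_{\gamma\in D_{n+1}\cap\Gamma_n}\ \bigsqcup_{j=1}^r\sigma^{\gamma^{-1}}C_{n+1,j}.
$$
I would then sort these subcells into the classes $C_{n,i}$ by means of \cref{rel-partition}: the block $\gamma=1_G$ contributes all of $C_{n+1}=\bigsqcup_j C_{n+1,j}$ to the single class $C_{n,\alpha_{n+1}}$, while each $\gamma\neq 1_G$ places $\sigma^{\gamma^{-1}}C_{n+1,i}$ into $C_{n,i}$. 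Writing $k_n:=|D_{n+1}\cap\Gamma_n|=[\Gamma_n:\Gamma_{n+1}]$ and using invariance of $\mu$, this yields
$$
\mu(C_{n,i})=(k_n-1)\,\mu(C_{n+1,i})\qquad(i\neq\alpha_{n+1}),
$$
$$
\mu(C_{n,\alpha_{n+1}})=\sum_{j=1}^r\mu(C_{n+1,j})+(k_n-1)\,\mu(C_{n+1,\alpha_{n+1}}).
$$

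These relations express level-$n$ quantities in terms of level $n+1$, but they can be inverted. The standing hypothesis $[\Gamma_i:\Gamma_{i+1}]\ge 3$ gives $k_n-1\ge 2$, so the first line yields $\mu(C_{n+1,i})=\tfrac{1}{k_n-1}\mu(C_{n,i})$ for $i\neq\alpha_{n+1}$; substituting these into the second line and solving for the remaining unknown gives
$$
\mu(C_{n+1,\alpha_{n+1}})=\frac{1}{k_n}\left(\mu(C_{n,\alpha_{n+1}})-\frac{1}{k_n-1}\sum_{j\neq\alpha_{n+1}}\mu(C_{n,j})\right).
$$
Thus every level-$(n+1)$ number is an explicit function of the level-$n$ numbers, depending only on the combinatorial data $k_n,\alpha_{n+1}$ and not on $\mu$ itself. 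Since $\mu(C_{0,i})=\mu([i]\cap X)$ is exactly the $i$-th coordinate of $p(\mu)$, the hypothesis $p(\mu)=p(\nu)$ gives equality at level $0$, and the induction propagates it to every level; combined with the invariance reduction of the first paragraph, this gives $\mu|_{\mathcal{P}_n}=\nu|_{\mathcal{P}_n}$ for all $n$.

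The step I expect to require the most care is the bookkeeping in the decomposition of $C_{n,i}$ into level-$(n+1)$ subcells: one must verify through \cref{rel-partition} that the $\gamma=1_G$ block contributes the whole of $C_{n+1}$ to the single class $C_{n,\alpha_{n+1}}$, while each $\gamma\neq 1_G$ contributes precisely $\sigma^{\gamma^{-1}}C_{n+1,i}$ to $C_{n,i}$, since a miscount would break the recursion. The only genuine arithmetic subtlety is that the inversion requires $k_n\neq 0$ and $k_n-1\neq 0$, which is exactly where the assumption $[\Gamma_i:\Gamma_{i+1}]\ge 3$ is used.
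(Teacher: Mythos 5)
Your proposal is correct and follows essentially the same route as the paper: the same refinement of $C_{n,i}$ into level-$(n+1)$ cells via Lemma \ref{rel-partition}, yielding the same linear recursion (note $k_n=|D_{n+1}\cap\Gamma_n|=\tfrac{|D_{n+1}|}{|D_n|}$), which the paper packages as an invertible matrix $A_n$ with $A_n\mu^{(n+1)}=\mu^{(n)}$ while you invert the system by hand. The substance, including the role of the standing assumption $[\Gamma_n:\Gamma_{n+1}]\geq 3$ in guaranteeing invertibility, is identical.
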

  
  \begin{proof}
For every $n\geq 0$, let $A_n$ be the $r$-dimensional integer matrix given by
  $$
  A_n(i,j)=\left\{ \begin{array}{ll}
                   \frac{|D_{n+1}|}{|D_n|}-1 &\mbox{ if } i=j\neq \alpha_{n+1}\\
                     \frac{|D_{n+1}|}{|D_n|} &\mbox{ if } i=j= \alpha_{n+1}\\
                   0 & \mbox{ if } i\neq j \mbox{ and } i\neq \alpha_{n+1}\\
                   1 & \mbox{ if } i\neq j \mbox{ and } i=\alpha_{n+1}.
     \end{array}\right.,
  $$
 where $|D_0|=1$. 
 
  From Lemma \ref{rel-partition},  for every $1\leq i\leq r$  we have
   $$
  C_{n,i}=\left\{\begin{array}{ll}
                  \bigcup_{\gamma\in (D_{n+1}\cap \Gamma_n)\setminus \{1_G\}}\sigma^{\gamma^{-1}}C_{n+1,i} & \mbox{ if } i\neq \alpha_{n+1}\\
                  \bigcup_{\gamma\in (D_{n+1}\cap \Gamma_n)\setminus \{1_G\}}\sigma^{\gamma^{-1}}C_{n+1,\alpha_{n+1}}  \cup \bigcup_{j=1}^rC_{n+1,j} & \mbox{ if } i=\alpha_{n+1}\\
                  \end{array}\right.
  $$
Thus if $\mu$ is an invariant probability measure of $X$, then
  $$
  \mu(C_{n,i})=\left\{\begin{array}{ll}
                  \left(\frac{|D_{n+1}|}{|D_n|}-1\right)  \mu(C_{n+1,i}) & \mbox{ if } i\neq \alpha_{n+1}\\
                  \left(\frac{|D_{n+1}|}{|D_n|}-1\right)  \mu(C_{n+1,\alpha_{n+1}})  + \sum_{j=1}^r\mu(C_{n+1,j}) & \mbox{ if } i=\alpha_{n+1}\\
                  \end{array}\right.
  $$
  In other words, we have $A_n\mu^{(n+1)}=\mu^{(n)}$, where $\mu^{(n)}=(\mu(C_{n,1}),\dots, \mu(C_{n,r}))$.   Since the matrices $A_n$ are invertible (the columns are linearly independent), we have
  $$ \mu^{(n+1)}=A_n^{-1}\dots A_0^{-1}\mu^{(0)}.$$
Using that $\mu^{(0)}=p(\mu)$, we conclude.
  \end{proof}
  
\begin{remark}\label{remark:amenable-2}
{\rm If $G$ is amenable, then the sequence $(D_n)_{n\in\mathbb{N}}$ can be chosen F\o lner. In this case, Lemma  \ref{partitions-determine-measures} implies immediately that the affine map $p$ is a bijection, because the set of points that $(\mathcal{P}_n)_{n\in\mathbb{N}}$ do not separate, has zero measure with respect to any invariant measure (See \cite[Lemma 17]{CeCo19}). When the sequence $(D_n)_{n\in\mathbb{N}}$ is not F\o lner, that set could be {\em a priori} a  full measure set.  }
\end{remark}
 
\section{Measure-theoretic conjugacy.}\label{sec:measure-conjugate}

In this section, we study the properties of the measures $\nu_i$ constructed in Section \ref{sec:finitely-many}. Recall that the map $p:\cM(X,\sigma)\to \triangle$ introduced in Propostion \ref{at-least} is affine and surjective. We start by proving some technical lemmas that allow us to conclude that for any invariant measure $\mu\in\mathcal{M}(X,\sigma)$ which has the same $p$-image of $\nu_i$ for some $1\leq i\leq r$, the p.m.p. dynamical system $(X,\sigma,\mu)$ is measure conjugate to the odometer with its unique invariant measure (Lemma \ref{fundamental}, Proposition \ref{conjugacy}). As a consequence of that, we obtain that any invariant measure  whose image under $p$ is equal to the image of $\nu_i$, coincides with $\nu_i$ (Corollary \ref{cor:injectivep}) and that the $\nu_i's$ are ergodic and the unique measures that maximize the measures of symbol cylinders (Theorem \ref{theo:main2}).\\  
In the rest of this section, $U_n$ is the set defined in (\ref{definition}), i.e,  
 $$ 
 U_n=\{x\in \Sigma^G: x(D_{n+1})=\eta_n(D_{n+1})\}.
$$  
Additionally, for every $1\leq i\leq r$, we fix $\nu_i$ the limit of $(\mu_{i+kr-1})_{k\in \mathbb{N}}$ (see Remark \ref{different-measures}).

\begin{lemma}\label{big-measure}
Let $1\leq i\leq r$. For every $n\geq 1$  such that $n+1\equiv i \;(\bmod\; r)$, we have 
$$
\nu_i(U_n)\geq \lim_{s\to\infty}\frac{1}{|D_{n+1}|}\prod_{l=1}^{sr-1}\left(1-\frac{|D_{n+l}|}{|D_{n+l+1}|}   \right)
$$ 
and
$$
\nu_i\left(\bigcup_{v\in D_{n+1}}\sigma^{v^{-1}}U_n\right)\geq \lim_{s\to\infty} \prod_{l=1}^{sr-1}\left(1-\frac{|D_{n+l}|}{|D_{n+l+1}|}   \right).
$$ 
\end{lemma}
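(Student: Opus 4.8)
The plan is to use that both $U_n$ and $\bigcup_{v\in D_{n+1}}\sigma^{v^{-1}}U_n$ are clopen, so their $\nu_i$-measure is the limit of $\mu_m(\cdot)$ along the subsequence $m=m_k:=i+kr-1$ defining $\nu_i$ (recall $\nu_i=\lim_k\mu_{i+kr-1}$ and indicators of clopen sets are continuous). Since $n+1\equiv i\pmod r$ we have $n\equiv i-1\pmod r$, while $m_k+1=i+kr\equiv i\pmod r$ gives $m_k\equiv n\pmod r$. Hence for $k$ large (so $m_k>n$) the congruence hypothesis of \cref{good-patches} holds, and writing $m_k-n=sr$ with $s\to\infty$, the exponent $m_k-n-1$ runs exactly through the values $sr-1$ appearing in the statement.

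The core is a counting estimate for $\mu_{m_k}(U_n)=\frac{1}{|D_{m_k}|}\,|\{u\in D_{m_k}:\sigma^{u^{-1}}(\eta_{m_k})\in U_n\}|$. First I would note that if $\gamma\in\Gamma_{n+1}\cap D_{m_k}$ satisfies relation~(\ref{good-relation}), then \cref{good-patches} yields $\sigma^{\gamma^{-1}}(\eta)\in U_n$, that is $\eta(\gamma D_{n+1})=\eta_n(D_{n+1})$. The transfer from $\eta$ to the periodic approximant $\eta_{m_k}$ is the delicate point: by the last assertion of \cref{good-relation1} such $\gamma$ satisfy $\gamma D_{n+1}\subseteq D_{m_k}$, and since $\eta_{m_k}$ coincides with $\eta$ on $D_{m_k}$, we get $\eta_{m_k}(\gamma D_{n+1})=\eta(\gamma D_{n+1})=\eta_n(D_{n+1})$, whence $\sigma^{\gamma^{-1}}(\eta_{m_k})\in U_n$ with $\gamma\in D_{m_k}$. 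The number $N_{m_k,n}$ of admissible $\gamma$ is bounded below in \cref{good-relation1}, so
\begin{equation*}
\mu_{m_k}(U_n)\geq \frac{N_{m_k,n}}{|D_{m_k}|}\geq \frac{1}{|D_{n+1}|}\prod_{l=1}^{m_k-n-1}\left(1-\frac{|D_{n+l}|}{|D_{n+l+1}|}\right),
\end{equation*}
and letting $k\to\infty$ (equivalently $s\to\infty$) gives the first inequality.

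For the second inequality I would enlarge the set of relevant elements of $D_{m_k}$. Because $\Gamma_{n+1}$ is normal and $D_{n+1}$ is a fundamental domain, the map $(\gamma,v)\mapsto\gamma v$ on $\Gamma_{n+1}\times D_{n+1}$ is injective, so the products $\gamma v$ with $\gamma$ satisfying~(\ref{good-relation}) and $v\in D_{n+1}$ are pairwise distinct; each lies in $D_{m_k}$ since $\gamma D_{n+1}\subseteq D_{m_k}$. Moreover $\sigma^{(\gamma v)^{-1}}(\eta_{m_k})=\sigma^{v^{-1}}\sigma^{\gamma^{-1}}(\eta_{m_k})\in\sigma^{v^{-1}}U_n$, so each $\gamma v$ contributes to $\{u\in D_{m_k}:\sigma^{u^{-1}}(\eta_{m_k})\in\bigcup_{v\in D_{n+1}}\sigma^{v^{-1}}U_n\}$, which therefore has at least $N_{m_k,n}\,|D_{n+1}|$ elements. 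Consequently
\begin{equation*}
\mu_{m_k}\Bigl(\bigcup_{v\in D_{n+1}}\sigma^{v^{-1}}U_n\Bigr)\geq \frac{N_{m_k,n}\,|D_{n+1}|}{|D_{m_k}|}\geq \prod_{l=1}^{m_k-n-1}\left(1-\frac{|D_{n+l}|}{|D_{n+l+1}|}\right),
\end{equation*}
and passing to the limit yields the claim.

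I expect the main obstacle to be precisely the passage from $\eta$ to the periodized arrays $\eta_{m_k}$: \cref{good-patches} is stated for the genuine Toeplitz array, whereas the measures $\mu_{m_k}$ only see $\eta_{m_k}$, and the argument closes solely because the admissible translates $\gamma D_{n+1}$ remain inside the fundamental domain $D_{m_k}$ on which the two sequences agree. A secondary bookkeeping point is checking the modular matching $m_k\equiv n\pmod r$ together with $m_k-n-1=sr-1$, so that the stated limits are attained exactly along the defining subsequence of $\nu_i$.
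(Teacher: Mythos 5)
Your proposal is correct and follows essentially the same route as the paper: both arguments count the translates $\gamma$ (and $\gamma v$) supplied by Lemma \ref{good-relation1}, transfer the conclusion of Lemma \ref{good-patches} from $\eta$ to the periodic approximants $\eta_{n+sr}$ via the inclusion $\gamma D_{n+1}\subseteq D_{n+sr}$, and pass to the limit along the defining subsequence of $\nu_i$. The only difference is that you spell out the $\eta\rightsquigarrow\eta_{m_k}$ transfer and the injectivity of $(\gamma,v)\mapsto\gamma v$ explicitly, which the paper leaves implicit.
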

\begin{proof}
Let $k\geq 2$.  From Lemma \ref{good-relation1} we have 
$$
N_{n+k,n}\geq \prod_{l=1}^{k-1}\left(\frac{|D_{n+l+1}|}{|D_{n+l}|}-1\right)=\frac{|D_{n+k}|}{|D_{n+1}|}\prod_{l=1}^{k-1}\left(1-\frac{|D_{n+l}|}{|D_{n+l+1}|}   \right).
$$

On the other hand, if $k=sr$ for some $s\geq 1$, then  Lemma \ref{good-patches} implies that for every $\gamma\in (\Gamma_{n+1}\cap D_{n+sr})\setminus (D_{n+1}\Gamma_{n+2} \cup  \dots \cup D_{n+sr-1}\Gamma_{n+sr})$ we have
$$
\eta_{n}(D_{n+1})=\eta(\gamma D_{n+1})=\eta_{n+sr}(\gamma D_{n+1}).
$$
Thus we have $\sigma^{\gamma^{-1}}\eta_{n+sr}\in U_n$ and $\sigma^{(\gamma v)^{-1}}\eta_{n+sr}\in \sigma^{v^{-1}}U_n$ for $v\in D_{n+1}$, which implies 
$$
\mu_{n+sr}(U_n)\geq N_{n+sr,n}\frac{1}{|D_{n+sr}|}\geq \frac{1}{|D_{n+1}|}\prod_{l=1}^{sr-1}\left(1-\frac{|D_{n+l}|}{|D_{n+l+1}|}   \right),
$$
and
$$
\mu_{n+sr}\left(\bigcup_{v\in D_{n+1}}\sigma^{v^{-1}}U_n\right)\geq N_{n+sr,n}\frac{|D_{n+1}|}{|D_{n+sr}|}\geq \prod_{l=1}^{sr-1}\left(1-\frac{|D_{n+l}|}{|D_{n+l+1}|}   \right).
$$
Since $\nu_i$ is the limit of  $(\mu_{n+sr})_{s\in\mathbb{N}}$ and  $\lim_{s\to\infty} \prod_{l=1}^{sr-1}\left(1-\frac{|D_{n+l}|}{|D_{n+l+1}|}\right)$ exists, we conclude.
\end{proof}
For every $1\leq i\leq r$ and $k\geq 1$, we denote
$$
Y_{i,k}=\bigcap_{\gamma\in \Gamma_{i+kr-1}\cap D_{i+kr}}\sigma^{\gamma}C_{i+kr-1,i}.
$$

\begin{lemma}\label{big-measure1}
Let $1\leq i\leq r$ and $k\geq 1$. Then 
$$U_{i+kr-1}\cap \overline{O_{\sigma}(\eta)}=Y_{i,k}.$$ 
\end{lemma}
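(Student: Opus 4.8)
Throughout write $n=i+kr-1$, so that $\alpha_{n+1}=i$ and, by construction, $\eta\equiv i$ on $J(n)$. The plan is to reduce the asserted set equality to a purely combinatorial statement about $\eta$ along the orbit, and then to isolate a single rigidity property as the real content.

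First I would observe that both sides are clopen in $X$: the cylinder $U_n$ is clopen in $\Sigma^G$, while $C_{n,i}=C_n\cap\{x:x\equiv i\text{ on }J(n)\}$ is an intersection of a clopen set with a finite cylinder, hence clopen, and therefore so is $Y_{i,k}=\bigcap_{\gamma\in\Gamma_n\cap D_{n+1}}\sigma^\gamma C_{n,i}$. Since $O_\sigma(\eta)$ is dense in $X$ and two clopen subsets of $X$ that meet the dense set $O_\sigma(\eta)$ in the same set must be equal, it suffices to prove that $\sigma^g\eta\in U_n$ if and only if $\sigma^g\eta\in Y_{i,k}$, for every $g\in G$.

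Next I would translate the two conditions. Using $\sigma^h\eta\in C_n\iff h\in\Gamma_n$ together with $\sigma^g\eta\in\sigma^\gamma C_{n,i}\iff\sigma^{\gamma^{-1}g}\eta\in C_{n,i}$, one finds that $\sigma^g\eta\in Y_{i,k}$ is equivalent to: $g\in\Gamma_n$ and $\eta(g^{-1}\gamma u)=i$ for all $\gamma\in\Gamma_n\cap D_{n+1}$ and $u\in J(n)$. On the other hand, $\sigma^g\eta\in U_n$ is equivalent to $\eta(g^{-1}\gamma' g')=\eta(g')$ for all $\gamma'\in\Gamma_n\cap D_{n+1}$ and $g'\in D_n$. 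The implication $Y_{i,k}\Rightarrow U_n$ is then routine: for $g'\in\Per(\eta,\Gamma_n)\cap D_n$ normality of $\Gamma_n$ and $g\in\Gamma_n$ place $g^{-1}\gamma'g'$ in the coset $\Gamma_n g'$, on which $\eta$ is constantly $\eta(g')$; for $g'=u\in J(n)$ the colour condition gives $\eta(g^{-1}\gamma' u)=i=\eta(u)$. Conversely, specialising the $U_n$-condition to $g'=u\in J(n)$ (and recalling $\eta(u)=i$) yields exactly the colour condition, so the content of the reverse implication is the single assertion that $\sigma^g\eta\in U_n$ forces $g\in\Gamma_n$.

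The crux is therefore to show that $\eta(g^{-1}\gamma' g')=\eta(g')$ for all $\gamma'\in\Gamma_n\cap D_{n+1}$ and $g'\in D_n$ implies $g\in\Gamma_n$; in words, that the periodised window $\eta_n|_{D_{n+1}}$ occurs in $\eta$ only along the correct $\Gamma_n$-alignment. Here I would invoke the essential-period property of $\Gamma_n$: fixing a symbol $\alpha$ and a representative $p\in\Per(\eta,\Gamma_n,\alpha)\cap D_n$, the hypothesis gives $\eta(g^{-1}\gamma' p)=\alpha$ for every $\gamma'\in\Gamma_n\cap D_{n+1}$, and writing $g^{-1}\gamma'p=(g^{-1}\gamma'g)(g^{-1}p)$ the factors $g^{-1}\gamma'g$ range, by normality of $\Gamma_{n+1}$ in $G$, over a full transversal of $\Gamma_{n+1}$ in $\Gamma_n$. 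The goal is to upgrade these transversal values into $g^{-1}p\in\Per(\eta,\Gamma_n,\alpha)$, whence $\Per(\eta,\Gamma_n,\alpha)\subseteq\Per(\sigma^g\eta,\Gamma_n,\alpha)$ for all $\alpha$ and so $g\in\Gamma_n$ by essentiality. I expect this upgrade to be the main obstacle: the window $D_{n+1}$ is itself a fundamental domain of $\Gamma_{n+1}$, so it exposes only one point of each $\Gamma_{n+1}$-coset, and passing from these scattered values to genuine $\Gamma_n$-periodicity at $g^{-1}p$ requires the fine structure of the construction, namely Corollary \ref{partition} and Lemma \ref{auxiliar0}, which mark the aperiodic positions of the identity sub-block and of the remaining sub-blocks of $D_{n+1}$ with the distinct symbols $\alpha_{n+1}$ and $\alpha_{n+2}$, thereby excluding a misaligned $g$. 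Once $g\in\Gamma_n$ is secured, the chain of equivalences above closes and the two clopen sets agree on the dense orbit, yielding $U_{i+kr-1}\cap\overline{O_\sigma(\eta)}=Y_{i,k}$.
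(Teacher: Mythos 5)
Your reduction to the dense orbit is valid (both sides are clopen in $X$, so it suffices to compare their traces on $O_\sigma(\eta)$), your translation of the two membership conditions is correct, and the inclusion $Y_{i,k}\subseteq U_n$ is indeed routine as you say. But the proof is not complete: you correctly isolate the crux --- that $\sigma^g\eta\in U_n$ forces $g\in\Gamma_n$, i.e.\ that the periodised window $\eta_n|_{D_{n+1}}$ occurs in $\eta$ only at $\Gamma_n$-aligned positions --- and then you explicitly leave it unproved, calling the needed upgrade ``the main obstacle'' and gesturing at Corollary~\ref{partition} and Lemma~\ref{auxiliar0} without carrying out any argument. Since this alignment rigidity is precisely the nontrivial content of the lemma, the proposal has a genuine gap.

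Moreover, the route you sketch for closing the gap (essentiality of $\Gamma_n$) is unlikely to work as stated: essentiality requires $\Per(\eta,\Gamma_n,\alpha)\subseteq\Per(\sigma^g\eta,\Gamma_n,\alpha)$ for all $\alpha$, an infinite condition asserting genuine $\Gamma_n$-periodicity of $\sigma^g\eta$ at every periodic position, whereas the hypothesis $\sigma^g\eta\in U_n$ only pins down the value of $\sigma^g\eta$ at one representative of each $\Gamma_{n+1}$-coset inside the finite window $D_{n+1}$; passing from these scattered values to full periodicity is exactly what you acknowledge you cannot do. The paper's proof avoids essentiality altogether. It argues by contradiction on the largest $m<n$ with the offending translate $v$ in $\Gamma_m$: decomposing $v$ through the tile $D_{m+1}$, it extracts from the window hypothesis an equality $\eta(D_m)=\eta(wD_m)$ for some $w\in\Gamma_m\cap D_{m+1}$, and then uses the layered construction of $\eta$ --- the set $J(m)$ carries the colour $\alpha_{m+1}$ while, for $w\neq 1_G$, $wJ(m)\subseteq J(m+1)$ carries $\alpha_{m+2}\neq\alpha_{m+1}$ (here $r>1$ is used) --- to force $w=1_G$ and hence the contradiction $v\in\Gamma_{m+1}$. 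Some argument of this kind, exploiting the colour mismatch on the $J$-sets across consecutive levels, is what your proposal is missing. (The paper also treats arbitrary $x\in X$ rather than orbit points, but your clopen-plus-density reduction is a legitimate simplification of that part.)
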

\begin{proof}

If $x\in Y_{i,k}$ then $x(\gamma a)=\eta(a)$ and $x(\gamma b)=i$, for every $a\in \Per(\eta, \Gamma_{i+kr-1})$,   $\gamma\in \Gamma_{i+kr-1}\cap D_{i+kr}$ and $b\in J(i+kr-1)$. This implies that\\ $\sigma^{\gamma^{-1}}x(D_{i+kr-1})=\eta(D_{i+kr-1})$ for every $\gamma \in \Gamma_{i+kr-1}\cap D_{i+kr}$, which means that $Y_{i,k}\subseteq U_{i+kr-1}\cap \overline{O_{\sigma}(\eta)}$.

Let $x\in U_{i+kr-1}\cap \overline{O_{\sigma}(\eta)}$. There exist $v\in D_{i+kr}$ and $1\leq j \leq r$ such that $x\in \sigma^{v^{-1}}C_{i+kr,j}$. Let $y\in C_{i+kr,j}$ be such that $x=\sigma^{v^{-1}}y$.  

Since $x\in U_{i+kr-1}$, we have
\begin{equation}\label{rel-1}
 x(\gamma D_{i+kr-1})=\eta(D_{i+kr-1}), \mbox{ for every } \gamma\in \Gamma_{i+kr-1}\cap D_{i+kr}. 
\end{equation}
Since  $y\in C_{i+kr,j}\subseteq C_{i+kr}$, we have $y(g)=\eta(g)$ for every $g\in \Per(\eta, \Gamma_{i+kr})$. In particular,
\begin{equation}\label{rel-2}
y(D_{i+kr-1})=\eta(D_{i+kr-1}).
\end{equation}
Finally, the relation $x=\sigma^{v^{-1}}y$ implies
\begin{equation}\label{rel-3}
y(vD_{i+kr-1})=x(D_{i+kr-1})=\eta(D_{i+kr-1}).
\end{equation}
We will show that $v\in \Gamma_{i+kr-1}$. Suppose that $v\notin \Gamma_{i+kr-1}$ and let $0\leq n< i+kr-1$ be the biggest $n$ verifying $v\in \Gamma_{n}$ (here we assume $\Gamma_0=G$ and $D_0=\{1_G\}$).  
Let $\gamma\in \Gamma_{n+1}\cap D_{i+kr}$ and $u\in D_{n+1}$ be such that $v=\gamma u$.  Since $n+1\leq i+kr-1$, from Equation (\ref{rel-3}) we have
\begin{equation}\label{rel-5}
y(vD_{n+1})=y(\gamma uD_{n+1})=\eta(D_{n+1}).
\end{equation}
Since $G/\Gamma_{n+1}$ is a group, we know that there exists $w\in D_{n+1}$ such that $uw\in \Gamma_{n+1}$. Since $v,\gamma \in \Gamma_n$ and $v=\gamma u$, $u\in \Gamma_n$. Thus, $w\in \Gamma_n\cap D_{n+1}$, which implies $wD_n\subseteq D_{n+1}$.
Then, from Equation (\ref{rel-5}) we get
$$
y(vwD_n)=\eta(wD_n).
$$
Equations (\ref{rel-5}) and (\ref{rel-2}), together with the fact that $\gamma u w \in \Gamma_{n+1}$ and $D_n\subseteq \Per(y, \Gamma_{n+1})$, imply
$$
y(vwD_n)=y(\gamma uwD_n)=y(D_n)=\eta(D_n).
$$
From the last two equations we deduce that $\eta(D_n)=\eta(w D_n)$. However, since $w\in \Gamma_{n}\cap D_{n+1}$, the definition of $\eta$ requires that $w=1_G$. Indeed, if $w\neq 1_G$ then $\eta(wg)=\alpha_{n+2}\neq \eta(g)=\alpha_{n+1}$, for every $g\in J(n)$. But if $w=1_G$ then  $v\in \Gamma_{n+1}$, which contradicts the choice of $n$.  This shows that $v\in \Gamma_{i+kr-1}$.

\medskip

Since $y(g)=\eta(g)$ for every $g\in \Per(\eta, \Gamma_{i+kr})\supseteq \Per(\eta, \Gamma_{i+kr-1})$, we have that $x(g)=y(vg)=y(g)=\eta(g)$ for every $g\in \Per(\eta, \Gamma_{i+kr-1})$. This implies that $x\in C_{i+kr-1}$, which means that for every $\gamma\in \Gamma_{i+kr-1}$ there exists $1\leq j\leq r$ such that $\sigma^{\gamma^{-1}}x\in C_{i+kr-1,j}$. From Equation (\ref{rel-1}) we get that for every $\gamma\in \Gamma_{i+kr-1}\cap D_{i+kr}$ the index $j$ is equal to $i$. This shows that $x\in Y_{i,k}$.

\end{proof}

For every $1\leq i\leq r$ and $k\geq 0$, define
\begin{align}\label{Z}
Z_{i,k}=\bigcup_{v\in D_{i+kr}}\sigma^{v^{-1}}C_{i+kr,i}.
\end{align}

\begin{lemma}\label{big-measure2}
Let $1\leq i\leq r$ and $k\geq 1$. Then 
$$
\bigcup_{v\in D_{i+kr}}\sigma^{v^{-1}}Y_{i,k} \subseteq Z_{i,k} \cup  \bigcup_{j\neq i}\bigcup_{v\in D_{i+kr-1}}\sigma^{v^{-1}}C_{i+kr,j}.
$$
\end{lemma}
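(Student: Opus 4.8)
The plan is to prove the inclusion pointwise. I would fix $x\in Y_{i,k}$ and $v\in D_{i+kr}$ and show that $\sigma^{v^{-1}}x$ belongs to the right-hand side. To lighten notation set $m=i+kr$ and $E=\Gamma_{m-1}\cap D_m$, which by Lemma \ref{decom} is a transversal of $\Gamma_m$ in $\Gamma_{m-1}$ realizing the disjoint decomposition $D_m=\bigcup_{\delta\in E}\delta D_{m-1}$. Accordingly I would first write $v=\gamma_v u_v$ with $\gamma_v\in E$ and $u_v\in D_{m-1}$, both uniquely determined. Since $\gamma_v\in\Gamma_{m-1}\cap D_m$, the definition $Y_{i,k}=\bigcap_{\gamma\in E}\sigma^{\gamma}C_{m-1,i}$ gives $x':=\sigma^{\gamma_v^{-1}}x\in C_{m-1,i}$, and from $v^{-1}=u_v^{-1}\gamma_v^{-1}$ we get $\sigma^{v^{-1}}x=\sigma^{u_v^{-1}}x'$.

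Next I would locate $x'$ in the clopen partition $\{\sigma^{u^{-1}}C_m:u\in D_m\}$ of $X$: there is a unique $u\in D_m$ with $\sigma^{u}x'\in C_m$, and since the covering $\{C_{m,j}:1\le j\le r\}$ of $C_m$ is available (from Corollary \ref{partition}), there is a symbol $j$ with $\sigma^u x'\in C_{m,j}$. Because $x'\in C_{m-1}$ and $C_m\subseteq C_{m-1}$, both $x'$ and $\sigma^u x'$ lie in $C_{m-1}$, so the coset description (1)--(3) of the partition lemma of Section \ref{subsec:deftoeplitzodometers} forces $u\in\Gamma_{m-1}$, hence $u\in\Gamma_{m-1}\cap D_m=E$; write $u=\delta^*$. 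Consequently $\sigma^{v^{-1}}x=\sigma^{u_v^{-1}}x'\in\sigma^{(\delta^* u_v)^{-1}}C_{m,j}$, where $\delta^* u_v\in\delta^* D_{m-1}\subseteq D_m$.

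It then remains to split into two cases. If $\delta^*=1_G$, then $\delta^* u_v=u_v\in D_{m-1}$, so $\sigma^{v^{-1}}x\in\sigma^{u_v^{-1}}C_{m,j}$ with $u_v\in D_{m-1}$; this lands in $Z_{i,k}$ when $j=i$ (as $D_{m-1}\subseteq D_m$) and in $\bigcup_{j\ne i}\bigcup_{w\in D_{m-1}}\sigma^{w^{-1}}C_{m,j}$ otherwise, so it lies in the right-hand side either way. If $\delta^*\ne 1_G$, I would show $j=i$, which places $\sigma^{v^{-1}}x$ in $\sigma^{(\delta^* u_v)^{-1}}C_{m,i}\subseteq Z_{i,k}$ since $\delta^* u_v\in D_m$. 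To evaluate $j$, I pick any $b\in J(m-1)$; this set is nonempty because the index hypotheses give $|J(l)|=([\Gamma_{l-1}:\Gamma_l]-1)\,|J(l-1)|$, growing from $|J(1)|=[G:\Gamma_1]-1\ge 2$. By Lemma \ref{auxiliar0}, $\delta^* J(m-1)\subseteq J(m)$ because $\delta^*\in E\setminus\{1_G\}$, so $\delta^* b\in J(m)$; evaluating the constant value of $\sigma^{\delta^*}x'\in C_{m,j}$ on $J(m)$ at the point $\delta^* b$ yields $j=(\sigma^{\delta^*}x')(\delta^* b)=x'(b)=i$, the last equality because $x'\in C_{m-1,i}$ and $b\in J(m-1)$.

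The main obstacle is precisely this last computation, i.e. pinning the symbol $j$ to $i$ when $\delta^*\ne 1_G$. The conceptual point is that the coordinates of $J(m)$ reached by a nontrivial translation $\delta^*\in\Gamma_{m-1}$ are exactly the translates $\delta^* J(m-1)$, whose $\eta$-values (and hence the values of any element of $C_m$) are inherited from $J(m-1)$; since $x'\in C_{m-1,i}$ forces the value $i$ on $J(m-1)$, the symbol read off on $J(m)$ is again $i$. I would also make sure to record that every element of $C_m$ is constant on $J(m)$ (so that the symbol $j$ attached to $\sigma^{\delta^*}x'$ is well defined), which follows from $C_m=\overline{\{\sigma^\gamma\eta:\gamma\in\Gamma_m\}}$ together with Corollary \ref{partition}.
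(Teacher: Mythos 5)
Your proof is correct and follows essentially the same route as the paper's: the same decomposition $v=\gamma u$ with $\gamma\in\Gamma_{i+kr-1}\cap D_{i+kr}$ and $u\in D_{i+kr-1}$, and the same dichotomy according to whether the $\Gamma_{i+kr-1}$-component is trivial. The only difference is cosmetic: where the paper invokes Lemma \ref{rel-partition}(2) to force the symbol to be $i$ in the nontrivial case, you re-derive that inclusion directly from Lemma \ref{auxiliar0} by evaluating coordinates on $J(i+kr-1)$, which is exactly how Lemma \ref{rel-partition}(2) is proved in the paper.
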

\begin{proof}
The inclusion $\bigcup_{v\in D_{i+kr}}\sigma^{v^{-1}}C_{i+kr,i}\subseteq \bigcup_{v\in D_{i+kr}}\sigma^{v^{-1}}Y_{i,k}$ is direct from Lemma \ref{rel-partition}.

Suppose that $x\in \bigcup_{v\in D_{i+kr}}\sigma^{v^{-1}}Y_{i,k}$. Let $v\in D_{i+kr}$,  $\gamma\in D_{i+kr}\cap \Gamma_{i+kr-1}$ and $u\in D_{i+kr-1}$  be such that   $\sigma^{v}(x)\in Y_{i,k}$ and $v=\gamma u$. Then $x\in \sigma^{u^{-1}}C_{i+kr-1,i}$. On the other hand, if $w\in D_{i+kr}$, $\gamma'\in \Gamma_{i+kr-1}\cap D_{i+kr}$,  $u'\in D_{i+kr-1}$ and $1\leq j\leq r$ are such that $x\in \sigma^{w^{-1}}C_{i+kr,j}$ and $w=\gamma' u'$,  then Lemma \ref{rel-partition} implies that $x\in \sigma^{u'^{-1}}C_{i+kr-1,j}$ if $\gamma'\neq 1_G$. From this we deduce that for every $j\neq i$, 
$$
\left( \bigcup_{v\in D_{i+kr}}\sigma^{v^{-1}}Y_{i,k} \right) \cap \left( \bigcup_{v\in D_{i+kr}\setminus D_{i+kr-1}}\sigma^{v^{-1}}C_{i+kr,j}\right) =\emptyset.   $$
This shows the result.

\end{proof}

\begin{lemma}\label{full-measure}
For every $1\leq i\leq r$, we have
$$
\lim_{k\to \infty}\nu_i\left(Z_{i,k}\right)=1.
$$
\end{lemma}
\begin{proof}

We have
\begin{eqnarray*}
\nu_i\left( Z_{i,k} \right)+\frac{|D_{i+kr-1}|}{|D_{i+kr}|} & \geq & \nu_i\left( Z_{i,k} \right)+\nu_i\left( \bigcup_{j\neq i}\bigcup_{w\in D_{i+kr-1}}\sigma^{w^{-1}}C_{i+kr,j} \right)\\
    &=& \nu_i\left(   Z_{i,k} \cup  \bigcup_{j\neq i}\bigcup_{w\in D_{i+kr-1}}\sigma^{w^{-1}}C_{i+kr,j}      \right)
\end{eqnarray*}

The previous equation together with Lemmas \ref{big-measure}, \ref{big-measure1} and \ref{big-measure2} imply
$$
\nu_i\left( Z_{i,k} \right)+\frac{|D_{i+kr-1}|}{|D_{i+kr}|}\geq \nu_i\left(\bigcup_{v\in D_{i+kr}}\sigma^{v^{-1}}Y_{i,k}  \right) \geq \lim_{s\to\infty} \prod_{l=1}^{sr-1}\left(1-\frac{|D_{i+kr-1+l}|}{|D_{i+kr+l}|}   \right).
$$
Then, we have
$$
1+\frac{|D_{i+kr-1}|}{|D_{i+kr}|}\geq \nu_i\left( Z_{i,k} \right)+\frac{|D_{i+kr-1}|}{|D_{i+kr}|}\geq \lim_{s\to\infty} \prod_{l=1}^{sr-1}\left(1-\frac{|D_{i+kr-1+l}|}{|D_{i+kr+l}|}   \right).
$$
 Thus, from condition (\ref{crecimiento}) and Lemma \ref{crecimiento-2}  we get
$$
\lim_{k\to \infty}\nu_i(Z_{i,k})=1.
$$
\end{proof}

\vspace{5mm}

\begin{lemma}\label{intersection-lemma}
For every $1\leq i\leq r$ and $k\geq 0$,  we have
$$ Z_{i,k}\subseteq Z_{i,k+1} \cup \bigcup_{v\in D_{i+(k+1)r-1}}\sigma^{v^{-1}}C_{i+(k+1)r},$$ 
and
\begin{equation}\label{intersection}
Z_{i,k}\subseteq \left( \bigcap_{l\geq k} Z_{i,l} \right)  \cup \bigcup_{l\geq k+1}\bigcup_{v\in D_{i+lr-1}}\sigma^{v^{-1}}C_{i+lr}.
\end{equation}

  \end{lemma}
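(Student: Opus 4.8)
The plan is to bridge $Z_{i,k}$ and $Z_{i,k+1}$ through the intermediate levels $i+kr,i+kr+1,\dots,i+(k+1)r$, keeping track of exactly what is lost at each single step. Writing $n=i+kr$ and setting $W_m=\bigcup_{v\in D_m}\sigma^{v^{-1}}C_{m,i}$ for $n\le m\le n+r$, one has $W_n=Z_{i,k}$ and $W_{n+r}=Z_{i,k+1}$, so the first inclusion reduces to understanding how $W_m$ passes to $W_{m+1}$.

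For a single step I would decompose $C_{m,i}$ by means of Lemma~\ref{rel-partition} (this is the decomposition already appearing in the proof of Lemma~\ref{partitions-determine-measures}). The family $\{\sigma^{\gamma^{-1}}C_{m+1}:\gamma\in D_{m+1}\cap\Gamma_m\}$ partitions $C_m$, with the $\gamma=1_G$ block equal to $C_{m+1}$; sorting each block according to its value on $J(m)$ (parts (1) and (2) of Lemma~\ref{rel-partition}) yields
$$C_{m,i}=\bigcup_{\gamma\in(D_{m+1}\cap\Gamma_m)\setminus\{1_G\}}\sigma^{\gamma^{-1}}C_{m+1,i}\mbox{ if }i\neq\alpha_{m+1},$$
and the same union enlarged by the block $C_{m+1}=\bigcup_{j=1}^r C_{m+1,j}$ when $i=\alpha_{m+1}$. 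Applying $\bigcup_{v\in D_m}\sigma^{v^{-1}}(\cdot)$ and using the disjoint decomposition $D_{m+1}=\bigcup_{\gamma\in D_{m+1}\cap\Gamma_m}\gamma D_m$ of Lemma~\ref{decom}(4) — whose $\gamma=1_G$ coset is precisely $D_m$, so that the cosets with $\gamma\neq 1_G$ exhaust $D_{m+1}\setminus D_m$ — I get $W_m=\bigcup_{w\in D_{m+1}\setminus D_m}\sigma^{w^{-1}}C_{m+1,i}\subseteq W_{m+1}$ in the clean case $i\neq\alpha_{m+1}$, and $W_m\subseteq W_{m+1}\cup\bigcup_{v\in D_m}\sigma^{v^{-1}}C_{m+1}$ in the case $i=\alpha_{m+1}$.

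The key observation is a counting modulo $r$: since $n\equiv i\pmod r$, as $m$ ranges over $n,n+1,\dots,n+r-1$ the indices $\alpha_{m+1}$ range over $i+1,i+2,\dots,i+r\pmod r$, so $\alpha_{m+1}=i$ happens only at the final step $m=n+r-1$. Thus every step but the last is clean, and chaining the inclusions gives $W_n\subseteq W_{n+r}\cup\bigcup_{v\in D_{n+r-1}}\sigma^{v^{-1}}C_{n+r}$; substituting $n+r-1=i+(k+1)r-1$ and $n+r=i+(k+1)r$ is exactly the first displayed inclusion.

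The second inclusion then follows formally by iteration. Writing $R_l=\bigcup_{v\in D_{i+lr-1}}\sigma^{v^{-1}}C_{i+lr}$, the first inclusion reads $Z_{i,k}\subseteq Z_{i,k+1}\cup R_{k+1}$. For $x\in Z_{i,k}$ with $x\notin\bigcup_{l\ge k+1}R_l$, an induction shows $x\in Z_{i,l}$ for all $l\ge k$: if $x\in Z_{i,l}$ then $x\in Z_{i,l+1}\cup R_{l+1}$, and $x\notin R_{l+1}$ forces $x\in Z_{i,l+1}$; hence $x\in\bigcap_{l\ge k}Z_{i,l}$, which gives the claim. I expect the only delicate part to be the bookkeeping of the fundamental domains in the single-step decomposition — identifying the $\gamma=1_G$ coset with $D_m$ and thereby the sweep $D_{m+1}\setminus D_m$ — together with checking that the extra block lands at level $n+r-1$ and not $n+r$.
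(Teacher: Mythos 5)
Your proposal is correct and follows essentially the same route as the paper: both arguments chain the single-step decomposition of $C_{m,i}$ from Lemma \ref{rel-partition} through the intermediate levels $i+kr,\dots,i+(k+1)r$, observe that the extra block $\bigcup_{j}C_{m+1,j}$ appears only at the one step where $\alpha_{m+1}=i$ (namely $m=i+(k+1)r-1$), and then obtain (\ref{intersection}) by iterating the resulting inclusion. Your pointwise induction for the second part is just a reformulation of the paper's set-theoretic induction, so there is nothing to add.
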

  \begin{proof}
  Lemma \ref{rel-partition} implies that
  $$
Z_{i,k}\subseteq \bigcup_{v\in D_{i+kr+s}}\sigma^{v^{-1}}C_{i+kr+s,i} \mbox{ for every } 1\leq s<r.  
  $$
 Lemma \ref{rel-partition} also implies that
 $$
 \bigcup_{v\in D_{i+kr+r-1}}\sigma^{v^{-1}}C_{i+kr+r-1,i}\subseteq Z_{i,k+1}\cup \bigcup_{j\neq i}\bigcup_{v\in D_{i+(k+1)r-1}}\sigma^{v^{-1}}C_{i+(k+1)r,j}.
 $$ 
  Combining the two previous equations we get 
  $$ Z_{i,k}\subseteq Z_{i,k+1} \cup \bigcup_{v\in D_{i+(k+1)r-1}}\sigma^{v^{-1}}C_{i+(k+1)r},$$ 
  which implies that
  \begin{eqnarray*}
  Z_{i,k} & = &Z_{i,k}\cap\left(Z_{i,k+1} \cup \bigcup_{v\in D_{i+(k+1)r-1}}\sigma^{v^{-1}}C_{i+(k+1)r}\right)\\
               &\subseteq &( Z_{i,k}\cap Z_{i,k+1})\cup   \bigcup_{v\in D_{i+(k+1)r-1}}\sigma^{v^{-1}}C_{i+(k+1)r}
  \end{eqnarray*}
  
Using an induction argument, we get that for every $m>k$
  \begin{eqnarray*}
   Z_{i,k} & \subseteq & \bigcap_{l=k}^m Z_{i,l} \cup \bigcup_{l=k+1}^m\bigcup_{v\in D_{i+lr-1}}\sigma^{v^{-1}}C_{i+lr}.\\
              & \subseteq & \bigcap_{l=k}^m Z_{i,l} \cup \bigcup_{l\geq k+1}\bigcup_{v\in D_{i+lr-1}}\sigma^{v^{-1}}C_{i+lr}.
              \end{eqnarray*}
From this we get (\ref{intersection}).
 \end{proof}

\begin{lemma}\label{Full-measure-mu}

For every $1\leq i\leq r$, for every $k\geq 0$ and for every invariant probability measure $\mu$ of $X$ we have
$$
\mu\left( \bigcap_{l\geq k} Z_{i,l} \right)\leq \mu(Z_{i,k})\leq \mu\left( \bigcap_{l\geq k} Z_{i,l} \right)+\sum_{l={k+1}}^{\infty}\frac{|D_{i+lr-1}|}{|D_{i+lr}|}. 
$$ 
Moreover, if  $p(\mu)=p(\nu_i)$ then 
$$
\lim_{k\to \infty}\mu\left( \bigcap_{l\geq k} Z_{i,l} \right)=1,
$$
and $\mu(A_i)=1$, where $$A_i = \bigcap_{g\in G} \bigcup_{k\geq 0} \bigcap_{l\geq k} \sigma^g (Z_{i,l}).
$$
\end{lemma}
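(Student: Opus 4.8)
The plan is to establish the two displayed inequalities first, since they hold for \emph{every} invariant probability measure with no extra hypothesis, and only afterwards to use the projection condition $p(\mu)=p(\nu_i)$ to transfer the geometric estimates available for $\nu_i$ over to $\mu$. The left inequality $\mu\left(\bigcap_{l\geq k}Z_{i,l}\right)\leq\mu(Z_{i,k})$ is immediate from monotonicity, as $\bigcap_{l\geq k}Z_{i,l}\subseteq Z_{i,k}$. For the right inequality I would begin with the inclusion supplied by Lemma \ref{intersection-lemma},
$$Z_{i,k}\subseteq\left(\bigcap_{l\geq k}Z_{i,l}\right)\cup\bigcup_{l\geq k+1}\bigcup_{v\in D_{i+lr-1}}\sigma^{v^{-1}}C_{i+lr},$$
apply countable subadditivity, and bound each block. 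The key input is that $\{\sigma^{g^{-1}}C_n:g\in D_n\}$ is a clopen partition of $X$ into $|D_n|$ atoms which, by $\sigma$-invariance of $\mu$, all have the same measure; hence $\mu(C_n)=1/|D_n|$. Consequently $\mu\left(\bigcup_{v\in D_{i+lr-1}}\sigma^{v^{-1}}C_{i+lr}\right)\leq|D_{i+lr-1}|\,\mu(C_{i+lr})=|D_{i+lr-1}|/|D_{i+lr}|$, and summing over $l\geq k+1$ produces exactly the stated error term.

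For the \emph{moreover} part I would first observe that $Z_{i,k}=\bigcup_{v\in D_{i+kr}}\sigma^{v^{-1}}C_{i+kr,i}$ is a union of atoms of the partition $\mathcal{P}_{i+kr}$. Thus the hypothesis $p(\mu)=p(\nu_i)$ together with Lemma \ref{partitions-determine-measures} yields $\mu(Z_{i,k})=\nu_i(Z_{i,k})$ for every $k$. Lemma \ref{full-measure} gives $\lim_{k\to\infty}\nu_i(Z_{i,k})=1$, so $\lim_{k\to\infty}\mu(Z_{i,k})=1$ as well; feeding this and the summability hypothesis $\lim_{k\to\infty}\sum_{l\geq k+1}|D_{i+lr-1}|/|D_{i+lr}|=0$ into the squeeze provided by the first part gives $\lim_{k\to\infty}\mu\left(\bigcap_{l\geq k}Z_{i,l}\right)=1$.

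It then remains to deduce $\mu(A_i)=1$. Setting $B=\bigcup_{k\geq 0}\bigcap_{l\geq k}Z_{i,l}$, the sets $\bigcap_{l\geq k}Z_{i,l}$ increase with $k$, so continuity from below gives $\mu(B)=\lim_{k\to\infty}\mu\left(\bigcap_{l\geq k}Z_{i,l}\right)=1$. Since each $\sigma^g$ is a bijection commuting with countable unions and intersections, $\sigma^g(B)=\bigcup_{k\geq 0}\bigcap_{l\geq k}\sigma^g(Z_{i,l})$, whence $A_i=\bigcap_{g\in G}\sigma^g(B)$; by invariance each $\sigma^g(B)$ is $\mu$-conull, and as $G$ is countable a countable intersection of conull sets is conull, so $\mu(A_i)=1$.

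The main obstacle is the transfer step: there is no direct combinatorial estimate of $\mu(Z_{i,k})$ for an arbitrary invariant $\mu$, since our control flows only through the periodic measures $\mu_n$ and hence through their limit $\nu_i$. The projection hypothesis $p(\mu)=p(\nu_i)$ is precisely what lets me replace $\mu(Z_{i,k})$ by $\nu_i(Z_{i,k})$ via Lemma \ref{partitions-determine-measures}, and checking that $Z_{i,k}$ is genuinely $\mathcal{P}_{i+kr}$-measurable is what makes that lemma applicable. One should also record that $\lim_{k\to\infty}\nu_i(Z_{i,k})=1$ rests on Lemma \ref{full-measure}, whose product hypothesis is formally stronger than the summability condition assumed here; in the application this is harmless, since the subsequence $(D_n)_{n\in\mathbb{N}}$ is chosen so that both conditions hold at once.
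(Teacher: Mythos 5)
Your proof is correct and follows essentially the same route as the paper's, which simply cites Lemmas \ref{intersection-lemma}, \ref{partitions-determine-measures} and \ref{full-measure} and leaves implicit the details you supply (the bound $\mu(C_n)=1/|D_n|$ yielding the error term $\sum_{l\geq k+1}|D_{i+lr-1}|/|D_{i+lr}|$, and the countable-intersection argument for $A_i$). Your closing remark is also apt: the paper's appeal to Lemma \ref{full-measure} uses a product hypothesis not literally implied by the summability condition stated in this lemma, but both hold for the subsequences chosen in the construction.
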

\begin{proof}
The first part of this statement follows directly from Lemma \ref{intersection-lemma}.
Observe that Lemma \ref{crecimiento-2} and \cite[Theorem 28.4]{Kn47} imply that $\lim_{k\to \infty}\sum_{l={k+1}}^{\infty}\frac{|D_{i+lr-1}|}{|D_{i+lr}|}=0$.

If $p(\mu)=p(\nu_i)$ then Lemma \ref{partitions-determine-measures} implies $\mu(Z_{k,i})=\nu_i(Z_{k,i})$ for every $k$. Then Lemmas \ref{full-measure} and \ref{intersection-lemma} imply that $\lim_{k\to \infty}\mu\left( \bigcap_{l\geq k} Z_{i,l} \right)=1$ and this in turn  implies that $\mu\left(\bigcup_{k\geq 0}\bigcap_{l\geq k} Z_{i,l}\right)=1$. Since $\mu$ is invariant, we get that $
\mu(A_i)=1.
$  
\end{proof}
\begin{lemma}\label{Injectivity-pi}
    For $1\leq i\leq r$, let $A_i$ be the set defined in Lemma \ref{Full-measure-mu}. The factor map $\pi\colon X\to \overleftarrow{G}$ from $X$ to its associated $G$-odometer is injective when restricted to $A_i$.
\end{lemma}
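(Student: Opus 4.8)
The plan is to reduce the injectivity of $\pi|_{A_i}$ to a statement about the single coordinate at $1_G$, and then to recover that coordinate from the odometer image together with membership in the sets $Z_{i,l}$. First I would record two structural facts. The set $A_i=\bigcap_{g\in G}\bigcup_{k\geq 0}\bigcap_{l\geq k}\sigma^g(Z_{i,l})$ is $G$-invariant: applying $\sigma^h$ commutes with the set operations and turns the outer intersection over $g$ into the same intersection reindexed by $g\mapsto hg$, so $\sigma^h A_i=A_i$. Moreover $\pi$ is equivariant, $\pi\circ\sigma^g=\phi^g\circ\pi$. Hence, to show that $\pi(x)=\pi(x')$ with $x,x'\in A_i$ forces $x=x'$, it suffices to prove the claim: if $y,y'\in A_i$ satisfy $\pi(y)=\pi(y')$, then $y(1_G)=y'(1_G)$. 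Indeed, for a fixed $g\in G$ one applies the claim to $y=\sigma^{g^{-1}}x$ and $y'=\sigma^{g^{-1}}x'$, which again lie in $A_i$, satisfy $\pi(y)=\phi^{g^{-1}}\pi(x)=\phi^{g^{-1}}\pi(x')=\pi(y')$, and have $y(1_G)=x(g)$ and $y'(1_G)=x'(g)$; letting $g$ range over $G$ then yields $x=x'$.

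To prove the claim, I would first take $g=1_G$ in the definition of $A_i$, which shows that $y\in\bigcup_{k\geq 0}\bigcap_{l\geq k}Z_{i,l}$, and likewise for $y'$; hence there is an index $l$ with $y,y'\in Z_{i,l}$ simultaneously. The heart of the argument is then that, for any point in $Z_{i,l}$, the value at $1_G$ is a function of its image under $\pi$ alone. Fix $y\in Z_{i,l}$. Since the sets $\{\sigma^{v^{-1}}C_{i+lr,i}:v\in D_{i+lr}\}$ are pairwise disjoint, being refinements of the clopen partition $\{\sigma^{v^{-1}}C_{i+lr}:v\in D_{i+lr}\}$, there is a unique $v\in D_{i+lr}$ with $y\in\sigma^{v^{-1}}C_{i+lr,i}$, and the block $\sigma^{v^{-1}}C_{i+lr}$ containing $y$ is exactly the information carried by the $(i+lr)$-th coordinate of $\pi(y)$. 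Writing $z=\sigma^v y\in C_{i+lr,i}$, the definition of $C_{i+lr,i}$ forces $z(w)=\eta(w)$ for $w\in D_{i+lr}\cap\Per(\eta,\Gamma_{i+lr})$ and $z(w)=i$ for $w\in J(i+lr)$, so $z$ is completely determined on $D_{i+lr}$. In particular $y(1_G)=z(v)$ depends only on $v$ and on whether $v$ lies in $J(i+lr)$, hence only on $\pi(y)$.

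To finish, since $\pi(y)=\pi(y')$ the two points occupy the same level-$(i+lr)$ block, so the corresponding $v\in D_{i+lr}$ is the same for both; as both belong to $Z_{i,l}$, the computation above gives $y(1_G)=z(v)=z'(v)=y'(1_G)$ with $z'=\sigma^v y'\in C_{i+lr,i}$, proving the claim and with it the lemma. The main obstacle, and the only genuinely delicate point, is the bookkeeping in the previous paragraph: one must verify that the odometer coordinate records precisely the partition block, and that membership in $Z_{i,l}$ (which pins the symbol on $J(i+lr)$ to equal $i$) together with that block recovers the value at $1_G$. The odometer coordinate by itself does \emph{not} determine $y(1_G)$, which is exactly why $\pi$ is not globally injective; it is the extra constraint encoded in $A_i$ that removes the ambiguity.
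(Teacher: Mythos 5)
Your proof is correct and takes essentially the same approach as the paper: the paper likewise fixes $g\in G$, uses the definition of $A_i$ to place $\sigma^{g^{-1}}x$ and $\sigma^{g^{-1}}y$ in a common $Z_{i,k}$, identifies the common block $\sigma^{v^{-1}}C_{i+kr,i}$ from $\pi(x)=\pi(y)$, and concludes from the fact that elements of $C_{i+kr,i}$ agree on all of $D_{i+kr}$. Your preliminary reduction to the coordinate at $1_G$ via the $G$-invariance of $A_i$ and equivariance of $\pi$ is only a cosmetic reorganization of that argument.
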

\begin{proof}
    Let $x,y\in A_i$ such that $\pi(x)=\pi(y)$.  We have that for each $g\in G$ there exists $k_g\geq  0$ such that $\sigma^{g^{-1}} x,\sigma^{g^{-1}} y\in Z_{i,l}$, for every $l\geq k_g$. Consider $k\geq k_g$. Since $\pi(x)=\pi(y)$, we get that $x,y\in \sigma^{v^{-1}_{i+kr}}C_{i+kr}$ for some  $v_{i+kr}\in D_{i+kr}$ and hence $\sigma^{g^{-1}} x,\sigma^{g^{-1}}y\in \sigma^{(v_{i+kr}g)^{-1}}C_{i+kr}$. If $v\in D_{i+kr}$ is such that   $v_{i+kr}g\in v\Gamma_{i+kr}$, then $\sigma^{g^{-1}} x,\sigma^{g^{-1}}y\in \sigma^{v^{-1}}C_{i+kr}$. Since $\sigma^{g^{-1}} x$ and $\sigma^{g^{-1}}y$ belong to $Z_{i,k}$, we get
     $\sigma^{g^{-1}} x,\sigma^{g^{-1}}y\in \sigma^{v^{-1}}C_{i+kr,i}$. Thus, there exist $w,z\in C_{i+kr,i}$ such that $\sigma^{g^{-1}}x=\sigma^{v^{-1}}w$    and $\sigma^{g^{-1}}y=\sigma^{v^{-1}} z$. Since $w,z\in C_{i+kr,i}$, we have $w(D_{i+kr})=z(D_{i+kr})$, which in turn implies that $w(v)=z(v)$. Consequently, we get 
    \begin{align*}
        x(g)=\sigma^{g^{-1}}x(1_G)=\sigma^{v^{-1}} w(1_G)=w(v)=z(v)=\sigma^{v^{-1}}z(1_G)=\sigma^{g^{-1}} y(1_G)=y(g).
    \end{align*}
    Since this is true for every $g\in G$, we conclude that $x=y$.
\end{proof}

\begin{proposition}\label{conjugacy}
    Let $\mu$ be an invariant probability measure of $X$. If $ \mu(A_i)=1$,  for some  $1\leq i\leq r$, then $(X,\sigma, \mu)$ and $(\overleftarrow{G},\phi,\nu)$ are measure conjugate. 
\end{proposition}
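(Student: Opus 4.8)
The plan is to prove that the factor map $\pi\colon X\to\overleftarrow{G}$, restricted to $A_i$, is itself the desired measure conjugacy. Two structural facts make this work. First, since $\overleftarrow{G}$ is a minimal equicontinuous system it is uniquely ergodic, its unique invariant measure being the Haar measure $\nu$; consequently the pushforward $\pi_*\mu$ is an invariant measure of $\overleftarrow{G}$ and must equal $\nu$, so that $\mu(\pi^{-1}(B))=\nu(B)$ for every Borel set $B\subseteq\overleftarrow{G}$ (this is the Remark in Subsection \ref{subsection-measures}). Second, $A_i$ is $G$-invariant: reindexing the outer intersection $\bigcap_{g\in G}$ by $g\mapsto hg$ gives $\sigma^hA_i=A_i$ for every $h\in G$. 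As $A_i$ is assembled from the clopen sets $Z_{i,l}$ by countable unions and intersections indexed by the countable group $G$, it is Borel, and by hypothesis it is conull, $\mu(A_i)=1$.

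I would take $f:=\pi|_{A_i}$ as the candidate. By Lemma \ref{Injectivity-pi} this map is injective, and being a restriction of a continuous map it is Borel. Here I invoke the Lusin--Souslin theorem: an injective Borel map from a standard Borel space has Borel image and is a Borel isomorphism onto it. Since $A_i$ is a Borel subset of the compact metric space $X$, it is standard Borel, so $Y':=\pi(A_i)$ is a Borel subset of $\overleftarrow{G}$ and $f\colon A_i\to Y'$ is a bimeasurable bijection. Equivariance, $f(\sigma^gx)=\phi^gf(x)$, is immediate from $\pi(\sigma^gx)=\phi^g\pi(x)$; it also yields $\phi^g(Y')=\pi(\sigma^gA_i)=\pi(A_i)=Y'$, so $Y'$ is $G$-invariant.

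It remains to verify that $Y'$ is conull and that $f$ transports $\mu$ to $\nu$. From $A_i\subseteq\pi^{-1}(\pi(A_i))=\pi^{-1}(Y')$ together with $\pi_*\mu=\nu$ we get $1=\mu(A_i)\leq\mu(\pi^{-1}(Y'))=\nu(Y')$, hence $\nu(Y')=1$. For the measure transport, any measurable $B\subseteq Y'$ satisfies $f^{-1}(B)=\pi^{-1}(B)\cap A_i$, so $\mu(f^{-1}(B))=\mu(\pi^{-1}(B))=\nu(B)$, again using $\mu(A_i)=1$ and $\pi_*\mu=\nu$. Thus $X'=A_i$ and $Y'=\pi(A_i)$ are conull $G$-invariant sets and $f$ is a bimeasurable, equivariant, measure-preserving bijection between them, which is exactly what the definition of measure conjugacy requires.

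The one genuine difficulty I anticipate is the measurability of $f^{-1}$. Continuity and injectivity of $\pi|_{A_i}$ are given, but $f^{-1}$ cannot be expected to be continuous: $\pi$ is only almost one-to-one and $A_i$ is far from closed, so a purely topological argument is unavailable. The Lusin--Souslin theorem is what resolves this, and the point to be careful about is that $A_i$ must be genuinely Borel (so that it is standard Borel and the theorem applies); everything else reduces to the single identity $\pi_*\mu=\nu$ furnished by unique ergodicity of the odometer.
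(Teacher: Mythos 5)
Your proof is correct and follows essentially the same route as the paper: take $f=\pi|_{A_i}$, use Lemma \ref{Injectivity-pi} for injectivity, unique ergodicity of $\overleftarrow{G}$ to identify $\pi_*\mu$ with $\nu$, and the conullity of $A_i$ to transport the measure. The only divergence is in justifying measurability of $\pi(A_i)$ and of $f^{-1}$ — the paper cites \cite[Theorem 2.8]{Gl} together with a rather terse appeal to $G$-invariance of $A_i$, whereas you invoke Lusin--Souslin directly; your version is, if anything, the more precise treatment of that step.
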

\begin{proof}
   Suppose there exists $i$ such that $\mu(A_i)=1$. By Lemma \ref{Injectivity-pi} we obtain that $\pi|_{A_i}\colon A_i\to \pi(A_i)$ is bijective.
   Using \cite[Theorem 2.8]{Gl} we get that $\pi(A_i)$ is measurable. 
   Moreover, using that $A_i$ is $G$-invariant and $\pi\colon X\to\overleftarrow{G}$ is a closed factor map we can conclude that $\pi|_{A_i}$ is an isomorphism of measurable spaces such that $\pi|_{A_i}(\sigma^g x)=\phi^g\pi|_{A_i}(x)$.
   Note that $\overleftarrow{G}$ is uniquely ergodic and for this reason, $\mu(\pi|_{A_i}^{-1}(B))=\nu(B)$ for every measurable subset $B$ of  $\pi(A_i)$. This implies that $\pi|_{A_i}$ is a measure conjugacy. 
\end{proof}

 \begin{corollary}\label{cor:injectivep}
     Let $p$ be the surjective affine map from $\cM(X,\sigma,G)$ to $\triangle$ introduced in Proposition 4.12. Let $\mu$ be an invariant measure of $(X,\sigma)$. If $p(\mu)=p(\nu_i)$ for some $i\in\{1,2,\dots, r\}$, then $\mu=\nu_i$
 \end{corollary}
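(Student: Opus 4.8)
The plan is to assemble the machinery developed in Lemmas \ref{full-measure}--\ref{Injectivity-pi} together with Proposition \ref{conjugacy}, treating the corollary essentially as a uniqueness-of-lift statement. First I would record that, after passing to a further subsequence of $(\Gamma_n)_{n\in\mathbb{N}}$ if necessary (which does not change the subshift up to conjugacy), the sequence $(D_n)_{n\in\mathbb{N}}$ may be taken to satisfy the two decay hypotheses $\lim_{n\to\infty}\lim_{k\to\infty}\prod_{l=1}^{k-1}(1-|D_{n+l}|/|D_{n+l+1}|)=1$ and $\lim_{k\to\infty}\sum_{l=k+1}^{\infty}|D_{i+lr-1}|/|D_{i+lr}|=0$ required by Lemmas \ref{full-measure} and \ref{Full-measure-mu}; this is arranged exactly as in the construction, by choosing the successive indices so that the ratios $|D_n|/|D_{n+1}|$ decrease fast enough.

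With these conditions in force, I would apply Lemma \ref{Full-measure-mu} twice: once to $\mu$, using the hypothesis $p(\mu)=p(\nu_i)$, to conclude $\mu(A_i)=1$; and once to $\nu_i$ itself, where $p(\nu_i)=p(\nu_i)$ holds trivially, to conclude $\nu_i(A_i)=1$. Thus both measures are concentrated on the same $G$-invariant Borel set $A_i$ on which, by Lemma \ref{Injectivity-pi}, the factor map $\pi\colon X\to\overleftarrow{G}$ is injective, and on which, by Proposition \ref{conjugacy}, $\pi|_{A_i}$ is a measure conjugacy onto $\overleftarrow{G}$ equipped with its unique invariant (Haar) measure $\nu$. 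In particular $\pi_*\mu=\nu=\pi_*\nu_i$, since unique ergodicity of the odometer forces every invariant measure of $X$ to project onto $\nu$.

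To finish, I would use injectivity of $\pi|_{A_i}$ to pull the equality $\pi_*\mu=\pi_*\nu_i$ back to $X$. For any Borel set $E\subseteq X$, injectivity gives $E\cap A_i=(\pi|_{A_i})^{-1}\big(\pi(E\cap A_i)\big)$, and the measure-conjugacy identity of Proposition \ref{conjugacy} yields
$$
\mu(E)=\mu(E\cap A_i)=\nu\big(\pi(E\cap A_i)\big)=\nu_i(E\cap A_i)=\nu_i(E),
$$
where the outer equalities use $\mu(A_i)=\nu_i(A_i)=1$. Hence $\mu=\nu_i$.

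The one point demanding care --- and the step I expect to be the main obstacle --- is the measurability of the images $\pi(E\cap A_i)$ and the precise sense in which $\pi|_{A_i}$ transports measures; but this is exactly what Proposition \ref{conjugacy} (via \cite[Theorem 2.8]{Gl}, since $A_i$ is invariant Borel and $\pi$ is a closed factor map) already guarantees. Beyond that, the corollary is a bookkeeping consequence: all the analytic content lives in the estimate $\lim_{k\to\infty}\nu_i(Z_{i,k})=1$ of Lemma \ref{full-measure}, and in its transfer to an arbitrary measure with the same image under $p$ through Lemma \ref{partitions-determine-measures}.
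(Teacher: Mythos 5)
Your proposal is correct and follows essentially the same route as the paper: apply Lemma \ref{Full-measure-mu} to both $\mu$ and $\nu_i$ to get $\mu(A_i)=\nu_i(A_i)=1$, invoke the measure conjugacy $\pi|_{A_i}$ from Proposition \ref{conjugacy} together with unique ergodicity of the odometer, and pull the equality of pushforwards back through the injective map $\pi|_{A_i}$. The only cosmetic difference is that the paper tests the two measures on open sets $O\subseteq X$ rather than arbitrary Borel sets, which sidesteps the measurability point you flag; otherwise the arguments coincide.
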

 \begin{proof}
 Suppose $p(\mu)=p(\nu_i)$ for some $1\leq i\leq r$. Let $A_i$ be defined as in Lemma \ref{Full-measure-mu}. By Lemma \ref{Full-measure-mu} and Proposition \ref{conjugacy}, $ (X,\mu)$ and $(\overleftarrow{G},\nu)$ are measure conjugate  and $ (X,\nu_i)$ and $(\overleftarrow{G},\nu)$ are also measure conjugate with  measure conjugacy $\pi|_{A_i}$ as in the proof of Proposition \ref{conjugacy}. Let $O\subseteq X$ be an open set. Since $O\cap A_i$ is an open set in $A_i$ and $\pi\mid_{A_i}$ is a measure conjugacy, $\pi\mid_{A_i}(O\cap A_i)$ is a measurable set in $\overleftarrow{G}$. Let $V=\pi\mid_{A_i}(O\cap A_i)$. Since $\mu(\pi|_{A_i}^{-1}(B))=\nu_i(\pi|_{A_i}^{-1}(B))=\nu(B)$, for every measurable subset $B$ of $\pi(A_i)$, we obtain
 \begin{align*}
     \mu(O)=\mu(O\cap A_i)=\mu(\pi|_{A_i}^{-1}(V))=\nu(V)=\nu_i(\pi|_{A_i}^{-1}(V))=\nu_i(O\cap A_i)=\nu_i(O).
 \end{align*}
 Since the open set $O$ was arbitrarily 
 taken, we conclude that $\mu=\nu_i$.
    
 \end{proof}

\begin{proof}[Proof of Theorem \ref{theo:main2}]
Let $1\leq i\leq r$. Suppose that $\nu_i$ is not an extreme point of $\cM(X,\sigma,G)$. This implies that there exist two distinct invariant measures $\mu_1$, $\mu_2$, and $0<t<1$ such that
$$\nu_i=t\mu_1+(1-t)\mu_2.$$
Thus,
$$p(\nu_i)=t p(\mu_1)+(1-t)p(\mu_2).$$
If $p(\mu_1)=p(\mu_2)$, then $p(\nu_i)=p(\mu_2)$, which by Corollary \ref{cor:injectivep} implies that $\mu_1=\nu_i$, and then $\nu_i=\mu_1=\mu_2$, a contradiction. Thus, $p(\mu_1)$ and $p(\mu_2)$ are two distinct elements in $\triangle$ and $p(\nu_i)$ lies between them. But this is not possible since $p(\nu_i)=\vec{t_i}$ is an extreme point in $\triangle$. We conclude that $\nu_i$ must be an extreme point of $\cM(X,\sigma, G)$, {\em i.e.}, an ergodic measure. From Lemma \ref{Full-measure-mu} and Proposition \ref{conjugacy} we get (1) of Theorem \ref{theo:main2}.

For the second part of the statement, note that for every $\mu\in\cM(X,\sigma,G)$, we have that there exist $\alpha_j\in [0,1]$ such that $\sum_{j=1}^r \alpha_j=1$ and $p(\mu)=\sum_{j=1}^r \alpha_j \vec{t_j}$. In particular, 
\begin{align*}
    \mu([i])=\sum_{j=1}^r\alpha_j\nu_j([i])=\alpha_i(1-d)+t_i=\alpha_i\nu_i([i])+(1-\alpha_i)t_i.
\end{align*}
Since $t_i<\nu_i([i])$, we have that $\mu([i])=\nu_i([i])$ if and only if $\alpha_i=1$.

Finally, Lemma \ref{odometers-compactification} allows to write the statement in terms of totally disconnected metrisable compactification of $G$.
\end{proof}

\subsection*{Acknowledgements} The authors warmly thank Raimundo Brice\~no and Samuel Petite for many fruitful discussions.

\end{document}